\numberwithin{equation}{section}
\numberwithin{theorem}{section}
\begin{document}

\title{Distance to  the discriminant}
\date{\today}

\author{C. Raffalli}
\address{LAMA, UMR 5127, Universit\'e de Savoie}
\email{christophe.raffalli@univ-savoie.fr}
\thanks{We wish to thank for the fruitful discussions and their
  advises our colleagues:
  Fr\'ed\'eric Bihan, Erwan Brugall\'e,
  Krzysztof Kurdyka, Olivier Le Gal, Fr\'ed\'eric Mangolte, Michel
  Raibaut, Jo\"el Rouyer.}

\maketitle

\section{Abstract}
We will study algebraic hyper-surfaces on the real unit sphere  $\mathcal
 S^{n-1}$ given by an homogeneous polynomial of degree d in n
variables with the view point, rarely exploited, of Euclidian geometry
using Bombieri's scalar product and norm. This view point is mostly
present in works about
the topology of random hyper-surfaces \cite{ShubSmale93,
  GayetWelschinger14}.

Our first result (lemma \ref{distgen} page \pageref{distgen}) is a
formula for the distance $\dist(P,\Delta)$ of a polynomial to the {\em real discriminant} $\Delta$, i.e. the set of polynomials
with a real singularity on the sphere. This formula is given for any distance coming from a
scalar product on the vector space of polynomials.

Then, we concentrate on Bombieri scalar product and its remarkable
properties. For instance we establish a
combinatoric formula for the scalar product of two products of
linear-forms (lemma \ref{scalarlinear} page \pageref{scalarlinear}) which allows to give a (new ?) proof of the invariance of
Bombieri's norm by composition with the orthogonal group.  These
properties yield a simple formula for the distance in theorem
\ref{bombineqdist} page \pageref{bombineqdist} from which we deduce
the following inequality:

$$\dist(P, \Delta) \leq \min_{x \hbox{
    critical point of } P \hbox{ on } \mathcal S^{n-1}} |P(x)|$$

The definition \ref{maindef} page \pageref{maindef} classifies in two
categories the ways to make a polynomial singular to realise the
distance to the discriminant. Then, we show, in theorem \ref{extremal} page
\pageref{extremal}, that one of the category is forbidden in
the case of an {\em extremal} hyper-surfaces (i.e. with maximal
Betti numbers). This implies as a corollary \ref{bombeqdistbis} (page \pageref{bombeqdistbis})
that the above inequality becomes an equality is that case.

The main result in this paper concerns extremal hyper-surfaces $P=0$ that maximise the
distance to the discriminant (with $\|P\| = 1$). They are very remarkable objects which enjoy
properties similar to those of quadratic forms: they are linear
combination of powers of linear forms $x \mapsto \langle x | u_i \rangle^d$
 where the vectors $u_i$ are the critical points of $P$ on $\mathcal
 S^{n-1}$ corresponding to the least positive critical value of $|P|$.
This is corollary  \ref{comblinbis} page \pageref{comblinbis} of a
similar theorem \ref{comblin} page \pageref{comblin} for all algebraic hyper-surfaces.

The next section is devoted to homogeneous polynomials in $2$
variables. We prove that a polynomial of degree $d$ with $2d$
regularly spaced roots on the unit circle is a local maximum of the
distance to the discriminant among polynomials with the same norm and
number of roots. We conjecture that it is a global maximum and that the
polynomial   of degree $d$ with $2r$
regularly spaced roots on the unit circle is also a similar global
maximum when $d < r \leq 2d$. This claim is supported by the fact that
we were able to prove the consequence of this together with corollary
\ref{comblinbis}
which yields to interesting trigonometric identities that we could not
find somewhere else (proposition \ref{trigo} page \pageref{trigo}).

We also obtain metric information about algebraic hyper-surfaces. First, in
the case of extremal hyper-surface, we
give an upper bound (theorem \ref{bandwidth} page
\pageref{bandwidth}) on the length of an integral curve of the
gradient of $P$ in the band where $|P|$ is less that the least positive
critical value of $|P|$. Then, a general
lower bound on the size and distance between the connected components
of the zero locus of $P$
(corollary \ref{sphereinside} and theorem \ref{distancebetween}).
 
The last section will present experimental results among which are five extremal sextic curves far
from the discriminant. These are obtained by very long running
numerical optimisation (many months) some of which are not terminated.

\tableofcontents

\pagebreak

\parindent=0pt
\parskip=5pt

\section{Notation}

Let $\mathcal{S}^{n-1}$ be the unit sphere of
$\mathbb{R}^n$. We write
$\|x\|$ the usual Euclidean norm on $\mathbb{R}^n$.

We consider $\mathbb{E} = \mathbb{R}[X_1,\dots,X_n]_d$ the vector space of homogeneous
polynomials in $n > 1$ variables of degree
$d > 1$. Let $N$ be the dimension of this vector space, we have $N = \binom{d+n-1}{n-1}
\geq n$

Let $\langle\_,\_\rangle$ be a scalar product on $\mathbb{E}$ and
$\|\_\|$ the associated norm. 
We use the same notation for the scalar product and norm of
$\mathbb{E}$ as for $\mathbb{R}^n$, the context should make it clear what norm
we are using. 

Let $\mathcal{B} = (E_1,\dots,E_N)$ be an orthonormal basis of 
$\mathbb{E}$.  

For $x \in \mathbb{R}^n$, $C(x)$ denotes the line 
vector 
$(E_1(x),\dots,E_N(x))$ and  $B_i(x)$ for $i \in \{1,\dots,n\}$ denotes the line 
vector 
$(\frac{\partial E_1(x)}{\partial x_i},\dots,\frac{\partial 
  E_N(x)}{\partial x_i})$. Let $B(x)$ be the $n \times N$ matrix 
whose lines are $B_i(x)$ for $i \in \{1,\dots,n\}$. 

For $P \in\mathbb{E}$, let $P_{\mathcal{B}}$ be the column vector 
coordinates of $P$ in the basis $\mathcal{B}$. We may write: 
$$ 
P(x) = {C(x)P_{\mathcal B}}, \frac{\partial 
  P(x)}{\partial x_i} = {B_i(x)P_{\mathcal B}} \hbox{ and } 
\nabla P(x) = B(x)P_{\mathcal B} 
$$ 

We will also use the following notation for the normal and tangent
component of a vector field $V(x)$ defined for $x \in
\mathcal{S}^{n-1}$:
\begin{align*}
V^N(x) &= \langle x | V(x)\rangle x \\ 
V^T(x) &= V(x) - V(x)^N
\end{align*}
In the particular case of $\nabla P(x)$, we
write $\nabla^T P(x)$ and we have Euler's relation $\nabla^N P(x) = d P(x) x$, which
gives:
$$
\nabla P(x) = \nabla^T P(x) + d P(x) x \hbox{ with }  \langle
\nabla^T P(x) | x \rangle = 0
$$

Similarly, we write $\mathcal H P(x)$ for the hessian matrix of $P$ at
$x$. We have that 
\begin{align*}
{}^tV \mathcal H P(x) x =  {}^tx \mathcal H P(x) V &=
  (d-1) \langle \nabla P(x) | V \rangle \\
&=  (d-1) \langle \nabla^T P(x) | V\rangle + d(d-1)P(x)\langle x | V\rangle \\
\text{and } {}^tx \mathcal H P(x) x &= d (d-1) P(x)\|x\|^2 \cr
\end{align*}

Hence, we can find a symmetrix matrix $\mathcal H^T P(x)$ whose kernel
contains $x$ and such that~:
\begin{align*}
 {}^tV \mathcal H P(x) V
= d (d-1) P(x) \langle x | V \rangle^2 + 2 (d-1) \langle \nabla^T
P(x) | V \rangle \langle x | V \rangle +  {}^tV \mathcal H^T P(x) V
\end{align*}

Geometrically, $\mathcal H^T P(x)$ is the matrix of the linear
application defined as $\pi(x) \circ \nabla^2 P(x) \circ \pi(x)$ where
$x \mapsto \pi(x)$ is the projection on the plane tangent to the unit sphere at
$x$ and $\nabla^2 P(x)$ is the second derivative of $P$ seen as a linear
application.

\begin{fact} 
The matrix $B(x)$ is always of maximal rank (i.e. of rank 
$n$) for all $x \neq 0$.  
\end{fact}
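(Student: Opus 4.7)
I would argue by contradiction, translating the matrix-rank statement into a statement about directional derivatives. Suppose $B(x)$ had rank strictly less than $n$ at some $x \neq 0$. Then its $n$ rows $B_1(x),\dots,B_n(x)$ would be linearly dependent, so there would exist $v = (v_1,\dots,v_n) \in \mathbb{R}^n \setminus \{0\}$ with $\sum_{i=1}^n v_i B_i(x) = 0$. Reading off the $j$-th component, this says precisely that the directional derivative $D_v E_j(x) = \sum_i v_i\, (\partial E_j/\partial x_i)(x)$ vanishes for every basis element $E_j$.

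Next I would invoke linearity: since $(E_1,\dots,E_N)$ is a basis of $\mathbb{E}$, the vanishing of $D_v E_j(x)$ for every $j$ forces $D_v P(x) = 0$ for every $P \in \mathbb{E}$. The task is thus reduced to exhibiting a single homogeneous polynomial of degree $d$ whose directional derivative at $x$ in the direction $v$ is non-zero.

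A natural test polynomial, built only from the data $x$ and $v$, is
$$P(X) = \langle X, x \rangle^{d-1}\, \langle X, v \rangle,$$
which lies in $\mathbb{E}$ because $d > 1$. Differentiating at $X = x$ in the direction $v$ produces two non-negative summands, one of which is $\|x\|^{2(d-1)}\|v\|^2 > 0$, so $D_v P(x) > 0$, contradicting the previous paragraph.

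The only step that needs any thought is the choice of test polynomial; it must be a single $P \in \mathbb{E}$ whose gradient at $x$ has a non-zero component along $v$, uniformly in $x \neq 0$ and $v \neq 0$. The above tensor of two linear forms does this cleanly, and the rest of the argument is pure bookkeeping. A dual formulation — showing that the linear map $P_{\mathcal B} \mapsto \nabla P(x)$ from $\mathbb{R}^N$ to $\mathbb{R}^n$ is surjective — would work equally well, but the contrapositive approach avoids constructing any explicit right-inverse.
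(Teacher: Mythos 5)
Your argument is correct. Both you and the paper rest on the same pivot: a rank defect of $B(x)$ is equivalent to the existence of $v\neq 0$ with $D_vP(x)=\sum_i v_i\,\partial P/\partial x_i(x)=0$ for \emph{every} $P\in\mathbb{E}$, since the $E_j$ form a basis. Where you diverge is in how the contradiction is produced. The paper works in the forward direction: it assumes (WLOG) $x_1\neq 0$, extracts from the monomial basis the $n$ columns of $B(x)$ corresponding to the monomials $x_1^{d-1}x_i$, observes that the resulting $n\times n$ submatrix is triangular with nonzero diagonal entries $x_1^{d-1}$, and then transfers full rank to an arbitrary orthonormal basis via the same basis-independence remark you use. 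You instead exhibit a single, coordinate-free test polynomial $P(X)=\langle X,x\rangle^{d-1}\langle X,v\rangle$ adapted to the pair $(x,v)$, whose directional derivative $D_vP(x)=(d-1)\|x\|^{2(d-2)}\langle x,v\rangle^2+\|x\|^{2(d-1)}\|v\|^2$ is strictly positive because $d>1$ and $x,v\neq 0$. Your route buys a cleaner, basis- and coordinate-free argument (no case split on which coordinate of $x$ is nonzero, no explicit submatrix); the paper's route has the minor virtue of displaying an explicit invertible $n\times n$ minor of $B(x)$, which makes the quantitative non-degeneracy visible. Both are complete proofs.
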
 

\begin{proof} 
Let us prove first that $B(x)$ is of maximal rank when the elements of $\mathcal 
B$ are monomials with arbitrary coefficients. By symmetry, we may assume that $x_1 
\neq 0$. Thus, $B(x)$ contains the 
following columns 
coming from the partial derivatives of 
$a_i x_1^{n-1}x_i$ for $1 \leq i \leq n$: 
$$ 
\left( 
\begin{matrix} 
         a_1 n x_1^{n-1}& a_2 (n-1) x_1^{n-2}x_2 &  a_3 (n-1) 
         x_1^{n-2}x_3&\dots& a_n (n-1) x_1^{n-2}x_n\cr 
         0 &a_2 x_1^{n-1}& 0&\dots& 0\cr 
         0 &0&a_3 x_1^{n-1}& \dots&0\cr 
         \vdots &\vdots &\vdots& \ddots&  \vdots\cr 
         0 & 0 & 0 & \dots & a_n x_1^{n-1}\cr 
\end{matrix} 
\right) 
$$ 
This proves that the  $n$ lines of $B(x)$ are linearly 
independent when the basis contains only monomials. Second, 
If for some basis $B(x)$ where of rank less that $n$, this  
would yield a linear combination with some non zero coefficients 
such that $ \sum_{1\leq i \leq n} \lambda_i B_i(x) = 0$, implying that 
for any polynomial $P$ we would have  $(\lambda_1,\dots,\lambda_n) 
B(x) P_{\mathcal B} = \sum_{1\leq i \leq n} \lambda_i 
\frac{\partial P}{\partial x_i}(x) = 0$, and this being independent 
of the basis would mean that $B(x)$ is never of maximal rank for that $x$.  
\end{proof} 

%<!-- Local IspellDict: british -->
%<!-- Local IspellPersDict: ~/.ispell-british -->

\section{Distance to the real discriminant}

\begin{definition}
The real discriminant $\Delta$ of the space $\mathbb{E}$ of polynomials of degree $d$ in $n$ variables is the set
of polynomials $P \in \mathbb{E}$ such that there exists
$x \in \mathcal{S}^{n-1}$ where
$P(x)=0$ and $\nabla P(x)=0$.
\end{definition}

This can be written 
$$
\Delta = \bigcup_{x \in \mathcal{S}^{n-1}} \Delta_x \hbox{ where }
\Delta_x = \{ P \in \mathbb{E}; B(x) P_{\mathcal B} = 0 \hbox{ and } C(x) P_{\mathcal B}  = 0 \}
$$

As usual, the equation $C(x) P_{\mathcal B} = 0$ is redundant because
of the Euler's relation which can be written here $C(x) = \frac{1}{d} (x_1,\dots,x_n) B(x)$. 

Therefore, the discriminant $\Delta$ is a union of sub-vector spaces of $\mathbb{E}$ of codimension
$n$ (given that $B(x)$ is of maximal rank).

Let  $P$ be a given polynomial in $\mathbb{E}$. We give a way to compute the 
distance between $P$ and $\Delta$.

We first choose $x_0 \neq 0$ and we compute
the distance from $P$ to $\Delta_{x_0}$. Therefore, we look for $Q \in \mathbb{E}$, such that:
\begin{itemize}
\item $P + Q \in \Delta_{x_0}$.
\item $\|Q\|$ minimal.
\end{itemize}

The first condition may be written 
$$
B(x_0)(P_{\mathcal B} + Q_{\mathcal B}) = 0
$$

The second condition is equivalent to  $Q$ orthogonal to
$\Delta_{x_0}$, which means that $Q_{\mathcal B}$ is a linear combination of the
vectors $ ^tB_i(x_0)$, the columns of $ ^tB(x_0)$. 

This means that there exists a column vector $H$ of size $n$
such that 
$$Q_{\mathcal B} =  {^t\!B(x_0)}H.$$
This gives:
$$
B(x_0)P_{\mathcal B} +  {B(x_0)}{ ^t\!B(x_0)}H = 0
$$

Let us define 
$$
A(x) = B(x){^t\! B(x)} \hbox{ and } M(x) = A(x)^{-1} 
$$
$B(x)$ is a $n \times N$ matrix of maximal rank with $n \leq N$. This implies
that $A(x)$ is an $n \times n$ symmetrical and definite matrix for all $x \neq
0$. Hence, $M(x)$ is well defined and symmetrical.

We have
$$
B(x_0)P_{\mathcal B} +  {A(x_0)}H = 0 \hbox{ which
  implies }
H = {-}M(x_0)B(x_0)P_{\mathcal B}
$$
and
$$
Q_{\mathcal B} = {-} {^t\!B(x_0)}M(x_0)B(x_0)P_{\mathcal B}
$$

We can now write the distance to $\Delta_{x_0}$ by
 
\begin{align*}
\dist^2(P,\Delta_{x_0}) &= \|Q\|^2 \\
  &= {^t\!Q_{\mathcal B}} Q_{\mathcal B}  \\
  &= {^t\! P_{\mathcal B}}  {^t\!B(x_0)}M(x_0)B(x_0) {^t\!B(x_0)}M(x_0)B(x_0) P_{\mathcal B}\\
  &= {^t\! P_{\mathcal B}}  {^t\!B(x_0)}M(x_0)A(x_0)M(x_0)B(x_0) P_{\mathcal B}\\
  &= {^t\! P_{\mathcal B}}  {^t\!B(x_0)}M(x_0)B(x_0) P_{\mathcal B}\\
  &= {^t \nabla P(x_0)} M(x_0) \nabla P(x_0)
\end{align*}

The above formula, established for any $x_0 \neq 0$, is homogeneous in
$x_0$. We can
therefore state our first lemma:
\begin{lemma}\label{distgen}
Let $(E_1,\dots,E_N)$ be an orthornomal basis of $\mathbb{E} =
\mathbb{R}[X_1,\dots,X_n]_d$ for a given scalar
product. Let $B(x)$ be the $n \times N$ matrix defined by:
 $$
B(x) = \left(\frac{\partial E_j(x)}{\partial
    x_i}\right)_{\begin{array}{l}\scriptstyle 1 \leq i \leq
    n\cr\scriptstyle 1 \leq j \leq N\cr\end{array}} 
$$
For any homogeneous polynomial $P \in \mathbb E$, 
the distance to the discriminant $\Delta$ associated to the given scalar
product is given by
$$
\dist(P,\Delta) = \min_{x \in  \mathcal{S}^{n-1}} \sqrt{{^t\nabla
    P(x)} M(x) \nabla P(x)} \hbox{ with }
M(x) = (B(x){^tB(x)})^{-1}
$$
\end{lemma}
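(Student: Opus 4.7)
The plan is to observe that $\Delta$ decomposes as the union $\bigcup_{x \in \mathcal{S}^{n-1}} \Delta_x$ where each $\Delta_x = \ker B(x)$ (using Euler's relation to eliminate the redundant equation $C(x)P_{\mathcal B}=0$). Each $\Delta_x$ is a linear subspace of codimension exactly $n$, thanks to the preceding Fact that $B(x)$ has maximal rank. Since the distance from $P$ to a union is the infimum of the distances to each piece, it suffices to compute $\dist(P,\Delta_x)$ for each $x$ and take the minimum over the sphere.

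To compute $\dist(P,\Delta_x)$ for a fixed $x_0\neq 0$, I would solve the orthogonal projection problem: find the minimal $Q$ with $P+Q \in \Delta_{x_0}$. Standard Hilbert-space theory says $Q$ must be orthogonal to $\Delta_{x_0}$, i.e.\ $Q_{\mathcal B}$ lies in the row-space of $B(x_0)$. Writing $Q_{\mathcal B} = {}^tB(x_0)H$ for some $H \in \mathbb R^n$, the constraint $B(x_0)(P_{\mathcal B} + Q_{\mathcal B})=0$ becomes $B(x_0)P_{\mathcal B} + A(x_0)H = 0$, where $A(x_0) = B(x_0){}^tB(x_0)$. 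Maximal rank of $B(x_0)$ makes $A(x_0)$ symmetric positive definite, so $M(x_0)=A(x_0)^{-1}$ exists and $H = -M(x_0)B(x_0)P_{\mathcal B}$.

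Substituting back, $\|Q\|^2 = {}^tQ_{\mathcal B}Q_{\mathcal B} = {}^tP_{\mathcal B}\,{}^tB(x_0)M(x_0)A(x_0)M(x_0)B(x_0)P_{\mathcal B}$, which telescopes via $M A M = M$ to ${}^t\nabla P(x_0)\,M(x_0)\,\nabla P(x_0)$, using $B(x_0)P_{\mathcal B} = \nabla P(x_0)$.

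Finally, I would note that the expression ${}^t\nabla P(x)M(x)\nabla P(x)$ is homogeneous in $x$ (the matrix $B(x)$ has entries homogeneous of degree $d-1$, so $A(x)$ is of degree $2(d-1)$ and $M(x)$ of degree $-2(d-1)$, while $\nabla P(x)$ is of degree $d-1$, yielding overall degree $0$). Hence the infimum of $\dist(P,\Delta_x)$ over $x \neq 0$ coincides with the minimum over the compact sphere $\mathcal S^{n-1}$, giving the announced formula. The only subtle point is justifying the interchange of infimum over $x$ with the projection onto $\Delta_x$, but this follows directly from $\dist(P,\bigcup_x \Delta_x) = \inf_x \dist(P,\Delta_x)$ and continuity of $x \mapsto \dist(P,\Delta_x)$ on the compact sphere.
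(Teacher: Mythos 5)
Your proposal is correct and follows essentially the same route as the paper: decompose $\Delta$ as the union of the linear subspaces $\Delta_x$, compute $\dist(P,\Delta_x)$ by orthogonal projection via the normal equations with $Q_{\mathcal B}={}^tB(x_0)H$, simplify using $M A M = M$, and pass from $x \neq 0$ to the unit sphere by homogeneity. The only additions beyond the paper's argument are your explicit degree count for homogeneity and the remark about continuity/compactness guaranteeing the minimum is attained, both of which the paper leaves implicit.
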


%<!-- Local IspellDict: british -->
%<!-- Local IspellPersDict: ~/.ispell-british -->

\section{The Bombieri norm}

The above lemma can be simplified in the particular case of Bombieri
norm\cite{BBEM90}. To do so, we recall the definition and properties of Bombieri norm and
scalar product.

Notation: let $\alpha = (\alpha_i,\dots,\alpha_n)$ be a vector in
$\mathbb{N}^n$ and $x = (x_1,\dots,x_n)\in
\mathbb{R}^n$, we write:
\begin{itemize}
\item $|\alpha| = \Sigma_{i=1}^n \alpha_i = d$,
\item $ \alpha! = \Pi_{i=1}^n \alpha_i!$,
\item $x^\alpha =  \Pi_{i=1}^n x_i^{\alpha_i}$ for $x \in \mathbb{R}^n$,
\item $\chi_i = (0,\dots,0,1,0,\dots,0)$ where the index of $1$ is $i$.
\end{itemize}

\begin{definition}[Bombieri norm and scalar product]
The Bombieri scalar product \cite{BBEM90} for homogeneous polynomial of degree $d$
is defined by 
$$
\|x^\alpha\|^2 = \frac{\alpha!}{|\alpha|!} \hbox{ and } \langle x^\alpha|x^\beta \rangle = 0 \hbox{ if } \alpha  \neq \beta
$$
\end{definition}

The Bombieri scalar product and the associated norm have the
remarkable property to be invariant by the action of the orthogonal
group of $\mathbb{R}^n$. It was originally introduced because it
verifies the Bombieri inequalities for product of
polynomials. However, we do not use this property here.
 
We now give a
lemma establishing the invariance and a result we need later in this
article:
\begin{lemma}\label{scalarlinear}
Let $\{u_i\}_{1 \leq i \leq d}$ and $\{v_i\}_{1 \leq i \leq d}$ be two
families of vectors of ${\mathbb R}^n$. 
Let us consider the two following homogeneous polynomials in $\mathbb E$:
\begin{align*}
U(x) &= \prod_{1 \leq i \leq d} \langle x | u_i\rangle &
V(x) &= \prod_{1 \leq i \leq d} \langle x | v_i\rangle \\
\end{align*}
The Bombieri scalar product of these polynomials is given by the
following formula which directly relates the Bombieri scalar product
of polynomials to the Euclidian one in $\mathbb R^n$:
$$
\langle U | V \rangle = \frac{1}{d!} \sum_{\sigma \in S_n} \prod_{1 \leq i \leq d}
 \langle u_i | v_{\sigma(i)} \rangle 
$$
When the two families are constant i.e. $U(x) =\langle x |
u\rangle^d$ and  $V(x) =\langle x |v\rangle^d$, this simplifies to:
$$
\langle U | V \rangle =  \langle u | v \rangle^d
$$
\end{lemma}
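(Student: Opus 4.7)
The plan is to recognise both sides of the asserted formula as symmetric multilinear functions of the data $(u_1,\dots,u_d;\,v_1,\dots,v_d)$, and then reduce the identity, by polarization, to a one-variable special case which is a direct computation.

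First I would check that both sides, viewed as functions of $(u_1,\dots,u_d;\,v_1,\dots,v_d)\in(\mathbb{R}^n)^{2d}$, are multilinear in each argument and symmetric under independent permutations of the $u_i$'s and of the $v_j$'s. On the left this is immediate since $U(x)=\prod_i\langle x|u_i\rangle$ is multilinear and symmetric in $(u_i)_i$ (and similarly $V$ in $(v_j)_j$), and the Bombieri product is bilinear. On the right, reindexing $\sigma\mapsto\sigma\tau^{-1}$ absorbs any permutation $\tau$ of the $u_i$'s, and composition on the other side handles the $v_j$'s.

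Next I would invoke the polarization identity: a symmetric multilinear form on $(\mathbb{R}^n)^d$ is determined by its restriction to the diagonal $\{(u,u,\dots,u)\}$. Applied to each symmetry block, this reduces the lemma to the case $u_1=\cdots=u_d=u$ and $v_1=\cdots=v_d=v$. In that case, the $d!$ terms on the right are all equal to $\langle u|v\rangle^d$, so the full identity collapses to the single special case
$$\langle\,\langle x|u\rangle^d,\ \langle x|v\rangle^d\,\rangle \;=\; \langle u|v\rangle^d,$$
which is precisely the second formula stated in the lemma; both assertions therefore reduce to this one identity.

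Finally, this special case is a direct calculation: expand $\langle x|u\rangle^d=\sum_{|\alpha|=d}\binom{d}{\alpha}u^{\alpha}x^{\alpha}$ via the multinomial theorem, do the same for $v$, and apply the defining relation $\langle x^{\alpha},x^{\beta}\rangle=\delta_{\alpha\beta}\,\alpha!/d!$ of the Bombieri product. The arithmetic simplification $\binom{d}{\alpha}^2\cdot\alpha!/d!=\binom{d}{\alpha}$ then rewrites the result as $\sum_{|\alpha|=d}\binom{d}{\alpha}\prod_k(u_kv_k)^{\alpha_k}=\langle u|v\rangle^d$ by one more use of the multinomial theorem. The only conceptual step is the polarization reduction; beyond that I do not anticipate any real obstacle.
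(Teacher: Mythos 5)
Your proof is correct, and it takes a genuinely different route from the paper. The paper computes $\langle U\mid V\rangle$ head-on: it expands both products over maps $\rho:\{1,\dots,d\}\to\{1,\dots,n\}$, groups the terms by the induced multi-index $\alpha=M(\rho)$, and reorganises the double sum by introducing a permutation $\sigma$ with $\rho'=\rho\circ\sigma$, using that exactly $\alpha!$ permutations fix a given $\rho$. That direct combinatorial bookkeeping produces the permanent formula from scratch. You instead observe that both sides of the identity are multilinear in $(u_1,\dots,u_d;v_1,\dots,v_d)$ and invariant under separate permutations of the $u_i$'s and the $v_j$'s, so by polarization (applied once per symmetry block, working over $\mathbb{R}$ where $d!$ is invertible) it suffices to check the double-diagonal case $u_1=\dots=u_d=u$, $v_1=\dots=v_d=v$; there the right side collapses to $\langle u\mid v\rangle^d$, and the left side is a short multinomial-theorem computation using the defining relation $\langle x^\alpha,x^\beta\rangle=\delta_{\alpha\beta}\,\alpha!/d!$. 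Your route is conceptually cleaner and pushes all the combinatorics into the well-known polarization lemma, proving the second displayed identity first and deducing the general one from it; the paper's route is more self-contained and exhibits exactly how the permanent of $(\langle u_i\mid v_j\rangle)$ arises from matching monomials. Both are sound. (Incidentally, the $S_n$ in the statement is a typo for $S_d$; both your argument and the paper's are really about $S_d$.)
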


\begin{proof}
We start by developing the polynomials $U$ and $V$. 
For this, we use $\rho, \rho'$ to denote applications from
$\{1,\dots,d\}$ to  $\{1,\dots,n\}$ and we write
$M(\rho) \in \mathbb N^n$ the vector such that
$M_i(\rho) = \mathrm{Card}(\rho^{-1}(\{i\}))$.

\begin{align*}
\langle U | V \rangle
 &= \left\langle \sum_{|\alpha|=d} x^\alpha \sum_{M(\rho) = \alpha}
  \prod_{1 \leq i \leq d} u_{i,\rho(i)}  \middle|
 \sum_{|\alpha|=d} x^\alpha \sum_{M(\rho') = \alpha}
  \prod_{1 \leq j \leq d} v_{j,\rho'(j)} \right\rangle \\
 &=  \sum_{|\alpha|=d} \frac{\alpha!}{|\alpha|!} 
\sum_{M(\rho) = M(\rho') = \alpha} \prod_{1 \leq i,j \leq d}
u_{i,\rho(i)} v_{j,\rho'(j)} \\
 &= \sum_{|\alpha|=d} \frac{1}{|\alpha|!} 
  \sum_{\sigma \in S_d} \sum_{M(\rho) = \alpha} \prod_{1 \leq i \leq d}
u_{i,\rho(i)} v_{\sigma{i},\rho(i)} \\
 &\hspace{10em} \hbox{ Using the $\alpha!$ permutations in $S_d$
  such that $\rho' = \rho \circ \sigma$ }\\
 &= \frac{1}{d!} \sum_{\sigma \in S_d} \sum_{|\alpha|=d} \sum_{M(\rho)
  = \alpha} \prod_{1 \leq i \leq d}
u_{i,\rho(i)} v_{\sigma{i},\rho(i)} \\
 &= \frac{1}{d!} \sum_{\sigma \in S_d} \prod_{1 \leq i \leq d} \langle
u_i | v_{\sigma(i)} \rangle \\
\end{align*}
\end{proof}

\begin{corollary}
The Bombieri norm is invariant by composition with the 
orthogonal group.
\end{corollary}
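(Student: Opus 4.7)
The plan is to reduce everything to the special case of $d$-th powers of linear forms, which the previous lemma handles perfectly. Specifically, the statement $\langle \langle\cdot|u\rangle^d \mid \langle\cdot|v\rangle^d\rangle = \langle u|v\rangle^d$ is manifestly $O(n)$-invariant in $u$ and $v$, so all that is needed is to transport this invariance to arbitrary polynomials via bilinearity.

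The first step is the key structural fact: the $d$-th powers $\{\langle\cdot|u\rangle^d : u\in\mathbb{R}^n\}$ span $\mathbb{E}$. I would prove this by a standard polarization argument: any monomial $x_1^{\alpha_1}\cdots x_n^{\alpha_n}$ of degree $d$ can be recovered (up to a nonzero combinatorial factor) from the multilinear polarization
\[
(u^{(1)},\dots,u^{(d)}) \mapsto \frac{1}{d!}\sum_{S\subseteq\{1,\dots,d\}} (-1)^{d-|S|}\Bigl\langle x \,\Big|\, \sum_{i\in S} u^{(i)}\Bigr\rangle^d,
\]
which expresses any product $\langle x|u^{(1)}\rangle\cdots\langle x|u^{(d)}\rangle$, and in particular any monomial, as a linear combination of $d$-th powers of linear forms.

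Given this, write two arbitrary polynomials $P,Q\in\mathbb{E}$ as $P=\sum_i c_i\langle\cdot|u_i\rangle^d$ and $Q=\sum_j d_j\langle\cdot|v_j\rangle^d$. For $g\in O(n)$, the identity $\langle gx|u\rangle = \langle x|g^{-1}u\rangle$ yields
\[
P\circ g = \sum_i c_i\,\langle\cdot|g^{-1}u_i\rangle^d,\qquad Q\circ g = \sum_j d_j\,\langle\cdot|g^{-1}v_j\rangle^d.
\]
Applying the second part of lemma \ref{scalarlinear} to each pair and using bilinearity of the Bombieri scalar product gives
\[
\langle P\circ g \mid Q\circ g\rangle = \sum_{i,j} c_i d_j\,\langle g^{-1}u_i \mid g^{-1}v_j\rangle^d = \sum_{i,j} c_i d_j\,\langle u_i\mid v_j\rangle^d = \langle P \mid Q\rangle,
\]
where the middle equality is exactly the $O(n)$-invariance of the Euclidean product on $\mathbb{R}^n$.

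The only substantive step is the spanning statement; the rest is a purely formal transport along bilinearity. I would expect the polarization identity to be the main (if rather mild) obstacle, since one must verify that the coefficients it produces are actually nonzero in characteristic zero, which is automatic here but deserves a brief mention.
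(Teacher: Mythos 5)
Your argument is correct and follows essentially the same route as the paper: both reduce to lemma \ref{scalarlinear}, which expresses the Bombieri scalar product of products of linear forms purely in terms of Euclidean inner products of vectors, and both then extend by bilinearity over a spanning family. The only (minor) difference is that you restrict to the $d$-th-power special case and therefore need the polarization identity to get spanning, whereas the paper uses the general product case, where spanning is immediate since every monomial $x^\alpha$ is itself a product of coordinate linear forms.
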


\begin{proof}
Proving this corollary is just proving that the Bombieri norm does not
depend upon the choice of coordinates in $\mathbb R^n$. The last
theorem establishes this for product of linear forms that generate all polynomials.
\end{proof}

We also have the following corollary, which is a way to see the
Veronese embedding in the particular case of Bombieri norm:
\begin{corollary}\label{veronese}
Let $P$ be an homogeneous polynomial of degree $d$ with $n$ variables,
then we have
$$
P(u) = \langle P | U \rangle \hbox{ with } U(x) = \langle x | u \rangle^d 
$$
\end{corollary}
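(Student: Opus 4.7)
The plan is to exploit bilinearity of the Bombieri scalar product: since both sides of the identity $P(u) = \langle P \mid U \rangle$ are linear in $P \in \mathbb{E}$, it is enough to verify the identity on a spanning family of $\mathbb{E}$. Two natural choices present themselves, and either would work.

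The quickest route is to test the identity on the monomial basis $\{x^\alpha : |\alpha| = d\}$. Using the multinomial expansion
$$U(x) = \langle x \mid u \rangle^d = \sum_{|\beta| = d} \frac{d!}{\beta!}\, u^\beta\, x^\beta,$$
and the definition of the Bombieri product ($\langle x^\alpha \mid x^\beta \rangle = 0$ for $\alpha \neq \beta$ and $\|x^\alpha\|^2 = \alpha!/d!$), one finds
$$\langle x^\alpha \mid U \rangle = \frac{d!}{\alpha!}\, u^\alpha \cdot \frac{\alpha!}{d!} = u^\alpha,$$
which is exactly $x^\alpha$ evaluated at $u$. Extending by linearity gives the result.

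Alternatively, one can apply the previous lemma directly: if $P(x) = \prod_{i=1}^d \langle x \mid v_i \rangle$, then taking $u_i = u$ for all $i$ in Lemma \ref{scalarlinear} gives
$$\langle P \mid U \rangle = \frac{1}{d!} \sum_{\sigma \in S_d} \prod_{i=1}^d \langle v_i \mid u \rangle = \prod_{i=1}^d \langle v_i \mid u \rangle = P(u),$$
since every summand is identical. Because products of linear forms span $\mathbb{E}$ (a standard polarization fact, already implicitly used in the previous corollary), the identity extends to all $P \in \mathbb{E}$ by linearity.

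There is no real obstacle; the only thing worth checking carefully is the multinomial normalization in the first approach, or, in the second, that the spanning property of products of linear forms is available in characteristic zero. I would write up the monomial-based argument since it is entirely self-contained and does not rely on a polarization argument.
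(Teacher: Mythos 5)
Your proposal is correct, and both of your routes match the paper's (one-line) proof, which writes $P$ as a linear combination of monomials and invokes Lemma \ref{scalarlinear} — a monomial $x^\alpha$ being itself a product of the coordinate linear forms, no separate polarization argument is needed. Your preferred monomial computation via the multinomial expansion of $U$ is just a self-contained unwinding of that same reduction, and the normalization $\frac{d!}{\alpha!}\,u^\alpha\cdot\frac{\alpha!}{d!}=u^\alpha$ checks out.
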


\begin{proof}
If we write $P$ has a linear combination of
monomials, the lemma \ref{scalarlinear}
immediately gives the result.
\end{proof}

We will use the following inequality which are proved in appendix \ref{appbomb}:
\begin{lemma}\label{bombineq}
For all $P \in \mathbb{E}$ and all $x \in \mathbb{R}^n$, we have:
$$
\begin{array}{rcl}
|P(x)| &\leq& \|P\| \|x\|^{d} \cr
\|\nabla P(x)\| &\leq& d \, \|P\| \,\|x\|^{d-1} \cr
\|\mathcal{H} P(x)\|_2 \leq \|\mathcal{H} P(x)\|_F &\leq&
\displaystyle d(d-1) \,\|P\|\, \|x\|^{d-2} \cr
\end{array}
$$
Using the following norms:
\begin{itemize}
\item The Euclidian norm on $\mathbb R^n$ (for $x$ and $\nabla P(x)$),
\item The Bombieri norm for polynomials (for $P$)
\item The Frobenius norm written $\|\_\|_F$ which is the square root
  of the sum of the squares of the matrix coefficients (for the
  Hessian $\mathcal{H} P(x)$).
\item The spectral norm written $\|\_\|_2$ which is the largest absolute value
  of the eigenvalues of the matrix (also for the
  Hessian $\mathcal{H} P(x)$).
\end{itemize}
\end{lemma}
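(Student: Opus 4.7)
The plan is to exploit the Veronese-type identity from Corollary \ref{veronese} for the pointwise bound on $P$, and then leverage the fact that the Bombieri norm interacts nicely with partial differentiation to reduce the gradient and Hessian bounds to the scalar case.

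First, for $|P(x)| \leq \|P\|\|x\|^d$, I would write $P(x) = \langle P \mid U_x \rangle$ with $U_x(y) = \langle y \mid x \rangle^d$ (Corollary \ref{veronese}), apply Cauchy--Schwarz in the Bombieri inner product, and finish with the constant-family case of Lemma \ref{scalarlinear} which gives $\|U_x\|^2 = \langle x \mid x \rangle^d = \|x\|^{2d}$.

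Second, I would establish the key auxiliary identity
$$\sum_{i=1}^n \left\|\frac{\partial P}{\partial x_i}\right\|^2 = d^2 \|P\|^2$$
by a direct computation on monomials: expanding $P = \sum_\alpha p_\alpha x^\alpha$ and using $(\alpha - \chi_i)! = \alpha!/\alpha_i$ when $\alpha_i \geq 1$, one finds $\|\partial_i P\|^2 = \sum_\alpha \frac{\alpha!\,\alpha_i}{(d-1)!}\,p_\alpha^2$; summing over $i$ and using $\sum_i \alpha_i = d$ with $\|P\|^2 = \sum_\alpha \frac{\alpha!}{d!} p_\alpha^2$ yields the identity. Since each $\partial_i P$ is homogeneous of degree $d-1$, applying the first inequality coordinate by coordinate and summing gives
$$\|\nabla P(x)\|^2 = \sum_i |\partial_i P(x)|^2 \leq \|x\|^{2(d-1)} \sum_i \|\partial_i P\|^2 = d^2 \|P\|^2 \|x\|^{2(d-1)},$$
which is the desired gradient bound.

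Third, for the Hessian, I would iterate the auxiliary identity: applied to each $Q = \partial_i P$ (homogeneous of degree $d-1$) it gives $\sum_j \|\partial_j Q\|^2 = (d-1)^2 \|Q\|^2$, hence $\sum_{i,j} \|\partial_i \partial_j P\|^2 = d^2(d-1)^2 \|P\|^2$. Applying the first inequality to each second partial derivative (of degree $d-2$) then yields
$$\|\mathcal{H}P(x)\|_F^2 = \sum_{i,j}|\partial_i\partial_j P(x)|^2 \leq d^2(d-1)^2\|P\|^2\|x\|^{2(d-2)}.$$
The spectral-versus-Frobenius inequality $\|M\|_2 \leq \|M\|_F$ is the standard fact that the top singular value is dominated by the $\ell^2$-norm of the singular-value vector.

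There is no real obstacle here; the only bookkeeping is in the monomial derivation of $\sum_i\|\partial_i P\|^2 = d^2\|P\|^2$, after which everything reduces to Cauchy--Schwarz and a sum. As an alternative, one could differentiate the Veronese identity directly, obtaining $\partial_i P(u) = d\,\langle P \mid x_i\langle\cdot\mid u\rangle^{d-1}\rangle$ and similarly for the Hessian, then bound the norms of the test polynomials using Lemma \ref{scalarlinear}; but the monomial route seems more elementary and is well suited for the appendix.
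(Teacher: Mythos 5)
Your proposal is correct and leads to exactly the same bounds, but it organises the argument a bit more cleanly than the paper does. The paper proves each of the three inequalities by a single weighted Cauchy--Schwarz computation on the monomial expansion $P(x)=\sum_{|\alpha|=d} a_\alpha\sqrt{d!/\alpha!}\,x^\alpha$: for the gradient it bounds $(\partial_i P(x))^2 \leq d\|x\|^{2(d-1)}\sum_\alpha \alpha_i a_\alpha^2$ directly (Cauchy--Schwarz with weights $\alpha_i$) and only at the end sums over $i$, using $\sum_i \alpha_i = d$; similarly for the Hessian. You instead isolate the Euler-type identity $\sum_i\|\partial_i P\|^2 = d^2\|P\|^2$ (which the paper uses implicitly in the final summation but never states), prove only the pointwise bound $|Q(x)|\leq\|Q\|\,\|x\|^{\deg Q}$ via Cauchy--Schwarz (or, equivalently, via Corollary \ref{veronese} plus $\|U_x\|^2 = \|x\|^{2d}$), and then obtain the gradient and Hessian bounds by applying that pointwise bound coordinatewise and iterating the identity. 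The intermediate inequality is identical to the paper's, so no constant is lost; what your factorisation buys is modularity (the derivative-norm identity is reusable and illuminating) and the avoidance of the somewhat ad hoc weighted Cauchy--Schwarz, at the cost of introducing one extra lemma. Both routes are valid, and your bookkeeping (the double count of off-diagonal pairs in $\sum_{i,j}\|\partial_i\partial_j P\|^2$ matching the double count in $\|\mathcal H P(x)\|_F^2$) is consistent.
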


All this inequalities are equalities for the monomial $x_i^d$ for $1
\leq i \leq n$ and by invariance for $d$ power of linear form.
In this case, the Hessian matrix will have only one non
null eigenvalue which implies that
$\|\mathcal{H} P(x)\|_2 = \|\mathcal{H} P(x)\|_F$.

%<!-- Local IspellDict: british -->
%<!-- Local IspellPersDict: ~/.ispell-british -->

\section{Distance with Bombieri norm}

Here is the formulation of the lemma \ref{distgen} in the particular
case of Bombieri's norm. It can be established from lemma
\ref{distgen}, but we propose a more direct proof using the invariance
by composition with the orthogonal group.

\begin{theorem}\label{bombdist}
Let $P \in \mathbb E$ be an homogeneous polynomial of degree $d$ with
$n$ variables. The distance to the real
discriminant $\Delta$ for the Bombieri norm is given by:
$$
\dist(P,\Delta) = \min_{x \in {\mathcal S}^{n-1}}\sqrt{ \displaystyle P(x)^2 +
  \frac{\|\nabla^T P(x)\|^2}{d} }
$$
\end{theorem}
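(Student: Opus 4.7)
The plan is to start from Lemma \ref{distgen}, which already gives
$\dist(P,\Delta) = \min_{x \in \mathcal{S}^{n-1}} \sqrt{{}^t\nabla P(x)\,M(x)\,\nabla P(x)}$ with $M(x) = (B(x){}^tB(x))^{-1}$, and to show that for the Bombieri scalar product the quadratic form inside the square root equals exactly $P(x)^2 + \|\nabla^T P(x)\|^2/d$. The idea suggested by the paper is to bypass a direct inversion of $A(x) = B(x){}^tB(x)$ for arbitrary $x$ by exploiting the orthogonal invariance proved in the previous section.

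First I would fix an arbitrary $x \in \mathcal{S}^{n-1}$ and choose an orthogonal matrix $R \in O(n)$ with $R e_1 = x$. Setting $P' = P \circ R$, the Bombieri norm of $P'$ equals that of $P$, and since $\Delta$ is stable under composition with $R$, we have $\dist(P,\Delta_x) = \dist(P',\Delta_{e_1})$. Also $P'(e_1) = P(x)$, and the chain rule together with the orthogonality of $R$ gives $\|\nabla^T P'(e_1)\| = \|\nabla^T P(x)\|$. So it suffices to prove the identity
\[
{}^t\nabla P'(e_1)\,M(e_1)\,\nabla P'(e_1) = P'(e_1)^2 + \frac{\|\nabla^T P'(e_1)\|^2}{d}
\]
for an arbitrary polynomial $P'$ evaluated at $e_1$.

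Next I would compute $A(e_1)$ directly in the Bombieri-orthonormal monomial basis $E_\alpha = \sqrt{d!/\alpha!}\,x^\alpha$. The entry
\[
A_{ij}(e_1) = \sum_{|\alpha|=d} \frac{d!}{\alpha!}\,\alpha_i \alpha_j\, (e_1)^{\alpha-\chi_i}\,(e_1)^{\alpha-\chi_j}
\]
is non-zero only when both $\alpha-\chi_i$ and $\alpha-\chi_j$ are supported on the first coordinate. A short case analysis yields $A_{11}(e_1) = d^2$, $A_{ii}(e_1) = d$ for $i \geq 2$, and $A_{ij}(e_1) = 0$ for $i \neq j$. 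Thus $M(e_1) = \mathrm{diag}(1/d^2, 1/d, \dots, 1/d)$.

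Finally, Euler's relation at $e_1$ gives $\partial_1 P'(e_1) = d\,P'(e_1)$, while the remaining coordinates of $\nabla P'(e_1)$ are precisely those of $\nabla^T P'(e_1)$. Plugging into the diagonal form of $M(e_1)$ yields the desired identity. The only delicate point is the orthogonal-equivariance reduction in the first step, where one has to verify that $B(x){}^tB(x)$ transforms covariantly under $R$ so that $M(x)$ does too; this follows because replacing the basis $\mathcal{B}$ by $\mathcal{B}\circ R^{-1}$ is still orthonormal by invariance of the Bombieri norm, and it conjugates $B$ by $R$. Once this is checked the rest is the short calculation above.
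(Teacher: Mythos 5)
Your proof is correct, but it takes the route that the paper explicitly sets aside. The paper says just before the theorem that it \emph{could} be deduced from Lemma~\ref{distgen} but prefers a ``more direct'' argument; you carry out precisely the deduction from Lemma~\ref{distgen}. Concretely, you compute $A(e_1)=B(e_1){}^tB(e_1)$ in the Bombieri-orthonormal monomial basis $E_\alpha=\sqrt{d!/\alpha!}\,x^\alpha$ — the diagonal values $A_{11}(e_1)=d^2$, $A_{ii}(e_1)=d$ for $i\geq 2$, and the vanishing of the off-diagonal terms all check out — and then close the argument with Euler's relation and orthogonal-equivariance of $A$ (indeed $A(Rx)=RA(x){}^tR$, $\nabla P(Rx)=R\nabla(P\circ R)(x)$, so the quadratic form is rotation-invariant). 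The paper's own proof instead avoids $M(x)$ and any matrix inversion entirely: it rotates so that $c\mapsto e_n$ and $\nabla^T P(c)$ aligns with $e_1$, observes that $\Delta_{e_n}$ is exactly the subspace of polynomials with no $x_n^d$ or $x_ix_n^{d-1}$ terms, and reads off the distance as the Bombieri norm of the projection $P(c)x_n^d+\|\nabla^T P(c)\|x_1x_n^{d-1}$ onto the orthogonal complement. That route is arguably more conceptual, and — more importantly in the logic of the paper — it produces equation (\ref{bombdisteq}) as a byproduct, which is reused in the proof of Theorem~\ref{contactradius} to identify the contact radius. Your route is more mechanical and self-contained once Lemma~\ref{distgen} is accepted, but it does not yield that extra structural information.
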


\begin{proof}
Consider $c \in \mathcal S^{n-1}$. We want to compute
$\dist(P,\Delta_c)$.
One can always find $h$ an element of the orthogonal group such that
$$
h(0,\dots,0,1) = c \hbox{ and } 
h(1,0,\dots,0) = \frac{\nabla^T P(c)}{\|\nabla^T P(c)\|} \hbox{ which
  implies } 
$$
\begin{align}
{P \circ h}(x) &= P(c) x_n^d + \|\nabla^T P(c)\| x_1 x_n^{d-1} + Q(x) \label{bombdisteq} 
\end{align}
where the monomials $x_n^d$ and $x_i x_n^{d-1}$ for $i \in
\{1,\dots,n\}$ do not appear in $Q(x)$.

Then, using the fact that the Bombieri norm is invariant by isometry,
the fact that distinct monomials are othogonal and the fact that $Q
\in \Delta_{(0,\dots,0,1)}$ which implies that $Q \circ h^{-1} \in \Delta_c$, we
have:
\begin{align}
\dist^2(P,\Delta_c) &= \dist^2(P \circ h, Q)\nonumber\\
     &=\|P(c) x_n^d + \nabla^T P(c) x_1 x_n^{d-1}\|^2\nonumber\\
&=
 \displaystyle P(c)^2 +
  \frac{\|\nabla^T P(c)\|^2}{d}  \label{bombintereqone}
\end{align}

We can also give an alternate formulation avoiding the decomposition
of the gradient in normal and tangent components:

\begin{align}
\dist^2(P,\Delta_c)
  &= \displaystyle P(c)^2 + \frac{\|\nabla^T P(c)\|^2}{d} \nonumber\\
  &= \displaystyle \frac{\|\nabla^N P(c)\|^2}{d^2} +
  \frac{\|\nabla^T P(c)\|^2}{d}  \nonumber \\
  &= \displaystyle  \frac{\|\nabla^N P(c)\|^2}{d^2} - \frac{\|\nabla^N P(c)\|^2}{d} +
  \frac{\|\nabla P(c)\|^2}{d}  \nonumber\\ &= \label{bombintereqdeux}
 \displaystyle (1-d) P(c)^2 +
  \frac{\|\nabla P(c)\|^2}{d}
\end{align}
\end{proof}

Let us define from equation (\ref{bombintereqdeux}) $\delta_P(x) = \frac{\|\nabla P(x)\|^2}{d} - (d-1)P(x)^2$. In
the theorem \ref{bombdist}, it is enough to consider
the critical points of $\delta_P$ on the unit sphere, that is points where $\nabla^T \delta_P(x)
= 0$. This means we have:

$$
\dist(P,\Delta) = \min_{x \in \mathcal S^{n-1}, \nabla^T \delta_P(x)
  = 0} \sqrt{\delta_P(x)} 
$$

Using $\mathcal H P(x) x = (d-1)\nabla P(x)$ and $\langle \nabla P(x)
| x \rangle = d P(x)$, we compute:
\begin{align}
\frac{d}{2} \nabla \delta_P(x) &= \mathcal H P(x) \nabla P(x) - d (d-1) P(x)
\nabla P(x)  \nonumber \\
 &=  \mathcal H P(x) \nabla P(x) - \langle \nabla P(x) | x \rangle
\mathcal H P(x) x \nonumber \\
 &=  \mathcal H P(x) (\nabla P(x) - \langle \nabla P(x) | x \rangle
 x) \nonumber \\
 &= \mathcal H P(x) \nabla^T P(x) \nonumber \\
 &= \mathcal H^T P(x) \nabla^T P(x) + (d-1) \|\nabla^T P(x)\|^2 x \label{hessianeq}
\end{align}

The first term in (\ref{hessianeq}) is $\frac{d}{2} \nabla^T \delta_P(x)$. Hence, we have:

\begin{align}
\dist(P,\Delta) &=& \min_{x \in \mathcal S^{n-1},  \mathcal H^T P(x) \nabla^T P(x)
  = 0} \sqrt{P(x)^2 +
  \frac{\|\nabla^T P(x)\|^2}{d}} \label{hesseq}
\end{align}

This motivates the following definition:

\begin{definition}[quasi-singular points, contact polynomial, contact radius]\label{maindef}
We will call quasi-singular points for $P \in \mathbb E$ the critical points of 
$\delta_d$ with norm 1 where the distance to the discriminant is reached. 
This means that
\begin{center}
$c \in \mathcal S^{n-1}$ is a quasi-singular points iff
  $\dist(P,\Delta) = \delta_P(c)$.
\end{center}
A necessary condition for $c$ to be a quasi singular point of $P$ is
$$\mathcal H^T P(c) \nabla^T P(c)  = 0$$

We will say that $Q$ is a {\em contact polynomial} for $P$ at $c$ if $c$ is a
quasi-singular point for $P$, $Q \in \Delta_c$ (this means that $\{x
\in \mathcal S^{n-1}; Q(x) = 0\}$ has a
singularity at $c$) and $\dist(P,\Delta) = \|Q - P\|$.

When $Q$ is contact polynomial for $P$ at $c$, we will say that $R =
Q - P$ is a {\em contact radius} for $P$ at $c$. A contact radius $R$ is therefore the
smallest polynomial for Bombieri norm that must be added to $P$ to
create a singularity.

Then, we distinguish two  kinds of quasi-singular points for $P$ (their names
will be explaned later):
\begin{description}
\item[quasi-double points] $c$ is quasi-double point if it is a
  quasi-singular point of $P$ and a critical point of $P$  on the
  unit sphere (i.e. satisfying 
$\nabla^T P(c) = 0$).
\item[quasi-cusp points] $c$ is quasi-cups point for $P$ if it is a
  quasi-singular point of $P$ which is not a
  critical point of $P$. In this case, $\nabla^T P(c)$ is a non zero 
member of the kernel  of $\mathcal H^T P(c)$.
\end{description}
\end{definition}

First, using the quasi-double points, we can find a very simple inequality
for the distance to the discriminant:

\begin{theorem}\label{bombineqdist}
Let $P \in \mathbb E$ be an homogeneous polynomial of degree $d$ with
$n$ variables. The distance to the real
discriminant $\Delta$ for the Bombieri norm satisfies:
$$
\dist(P,\Delta) \leq \min_{x \in {\mathcal S}^{n-1}, \nabla^T P(x) = 0}|P(x)|
$$

The condition $\nabla^T P(x) = 0$ means that $x$ is a critical
point of $P$ and our theorem means that the distance to the
discriminant is less or equal to the minimal critical value of $P$ in absolute
value.
\end{theorem}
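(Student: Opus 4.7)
The plan is to obtain this inequality as a direct specialization of Theorem \ref{bombdist}. That theorem already gives
$$\dist(P,\Delta) = \min_{x \in \mathcal S^{n-1}} \sqrt{P(x)^2 + \frac{\|\nabla^T P(x)\|^2}{d}},$$
and the minimum on the right is taken over the full sphere; restricting it to any subset can only increase its value. So my strategy is simply to restrict to the subset of critical points.

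First, I would observe that since $P$ is continuous on the compact manifold $\mathcal S^{n-1}$, it attains its extrema there, so the set $\{x \in \mathcal S^{n-1} : \nabla^T P(x) = 0\}$ is non-empty and the minimum $\min_{\nabla^T P(x)=0} |P(x)|$ is well-defined. Then, for any such critical point $x_0$, the second term in the radicand vanishes, leaving
$$\sqrt{P(x_0)^2 + \tfrac{1}{d}\|\nabla^T P(x_0)\|^2} \;=\; \sqrt{P(x_0)^2} \;=\; |P(x_0)|.$$
Since the distance is a minimum over all of $\mathcal S^{n-1}$, it is bounded above by the value at any particular point, in particular by $|P(x_0)|$ at each critical point $x_0$. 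Taking the infimum over all critical points then yields the desired inequality.

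There is essentially no obstacle here: the argument is a one-line corollary of Theorem \ref{bombdist}. The only thing worth emphasizing, which motivates the subsequent definition of quasi-double points, is that equality holds exactly when the minimum in Theorem \ref{bombdist} is realized at a point where $\nabla^T P$ vanishes — that is, when some quasi-singular point happens to also be a critical point of $P$ on the sphere. This is the observation that the later sections will exploit.
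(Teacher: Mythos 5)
Your proof is correct and is essentially the paper's own proof: both simply invoke Theorem \ref{bombdist} and note that restricting the minimum to the nonempty set of critical points (where the gradient term vanishes) gives an upper bound. The only addition you make is the brief remark on compactness guaranteeing the existence of critical points, which the paper leaves implicit.
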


\begin{proof}
 We use 
the theorem \ref{bombdist}:

\begin{align}
\dist^2(P,\Delta) &= \displaystyle \min_{x \in {\mathcal
    S}^{n-1}} \left(P(x)^2 + \frac{\|\nabla^T P(x)\|^2}{d}\right) \cr
&\leq \displaystyle \min_{x \in {\mathcal
    S}^{n-1}, \nabla^T P(x) = 0}P(x)^2 
\end{align}

\end{proof}

\begin{theorem}\label{contactradius}
Let $P \in \mathbb E$ be an homogeneous polynomial of degree $d \geq 2$ with
$n$ variables. Let $c$ be a quasi-singular point for $P$. Then, the contact
radius at $c$ is the polynomial 
$$R(x) = - P(c) \langle x \mid c \rangle^d - 
         {\langle x \mid \nabla^T P(c) \rangle} \langle x \mid c \rangle^{d-1} .$$

and $Q(x) = P(x) + R(x)$, the contact polynomial for $P$ at $c$, has no
other singularity than $c$ and $-c$.

Moreover, when $d = 2$, $c$ is always a quasi double point (i.e. $\nabla^T P(c) = 0$).
\end{theorem}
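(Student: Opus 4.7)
The plan is to handle the three assertions separately: first derive the explicit formula for $R$, then use it to constrain the singularities of $Q$, and finally treat the special case $d=2$ via a quadratic-form computation.

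For the contact radius formula, I would reuse the construction from the proof of Theorem~\ref{bombdist}: pick an orthogonal transformation $h$ mapping $(0,\dots,0,1)$ to $c$ and, when $\nabla^T P(c)\neq 0$, $(1,0,\dots,0)$ to $\nabla^T P(c)/\|\nabla^T P(c)\|$. Equation~(\ref{bombdisteq}) then reads $P\circ h = P(c)\,x_n^d + \|\nabla^T P(c)\|\,x_1 x_n^{d-1} + \tilde Q$ with $\tilde Q\in\Delta_{(0,\dots,0,1)}$. Since distinct monomials are orthogonal for Bombieri's scalar product, the two displayed terms are precisely the component of $P\circ h$ orthogonal to $\Delta_{(0,\dots,0,1)}$, so the contact radius in the rotated frame is $-P(c)\,x_n^d - \|\nabla^T P(c)\|\,x_1 x_n^{d-1}$. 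Undoing $h$ turns $x_n^d$ into $\langle x\mid c\rangle^d$ and $\|\nabla^T P(c)\|\,x_1 x_n^{d-1}$ into $\langle x\mid \nabla^T P(c)\rangle\langle x\mid c\rangle^{d-1}$ by orthogonality of $h$, which is exactly the stated $R$. The same derivation applied at an arbitrary $y\in\mathcal S^{n-1}$ shows that $R_y(x) := -P(y)\langle x\mid y\rangle^d - \langle x\mid \nabla^T P(y)\rangle\langle x\mid y\rangle^{d-1}$ is the orthogonal projection of $-P$ onto $\Delta_y^\perp$, regardless of whether $y$ is quasi-singular.

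For the uniqueness of singularities, the key observation is the factorisation $R(x)=\langle x\mid c\rangle^{d-1}\bigl(-P(c)\langle x\mid c\rangle - \langle x\mid \nabla^T P(c)\rangle\bigr)$, so $\langle x\mid c\rangle^{d-1}$ divides $R$. The points $\pm c$ are singularities of $Q$: indeed $Q\in\Delta_c$ by construction, and the homogeneity of $Q$ and $\nabla Q$ transfers the singularity to $-c$. Suppose now $y\in\mathcal S^{n-1}\setminus\{\pm c\}$ is another singularity of $Q$; then $Q\in\Delta_y$, so $\|R\|\geq \dist(P,\Delta_y)\geq \dist(P,\Delta)=\|R\|$, forcing $\dist(P,\Delta_y)=\|R\|$. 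Uniqueness of the orthogonal projection onto the linear subspace $\Delta_y$ then gives $R=R_y$, so $\langle x\mid y\rangle^{d-1}$ also divides $R$. Since $y\neq\pm c$, the linear forms $\langle x\mid c\rangle$ and $\langle x\mid y\rangle$ are coprime, hence the product $\langle x\mid c\rangle^{d-1}\langle x\mid y\rangle^{d-1}$ of degree $2(d-1)$ divides the degree-$d$ polynomial $R$, forcing $2(d-1)\leq d$ and thus $d\leq 2$.

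For $d=2$, write $P(x)=\langle Mx,x\rangle$ for the symmetric matrix $M$ associated with $P$. A direct computation yields $\delta_P(x)=2\|Mx\|^2 - \langle Mx,x\rangle^2$. Diagonalising $M$ in an orthonormal eigenbasis $(e_i)$ with eigenvalues $\lambda_i$ and setting $p_i=\langle x\mid e_i\rangle^2$, this rewrites as $2\sum p_i\lambda_i^2 - \bigl(\sum p_i\lambda_i\bigr)^2 = \mathbb{E}[\lambda^2]+\mathrm{Var}(\lambda)\geq \min_i \lambda_i^2$, with equality iff the distribution $(p_i)$ concentrates on indices attaining $\min_i|\lambda_i|$. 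Consequently any minimiser $c$ of $\delta_P$ is an eigenvector of $M$, so $\nabla P(c)=2Mc$ is collinear with $c$ and $\nabla^T P(c)=0$; that is, $c$ is quasi-double. This reduces $R$ to the pure square $-P(c)\langle x\mid c\rangle^2$, and the divisibility argument of the previous paragraph again rules out extra singularities in this case, since any candidate $y$ would force $\langle x\mid y\rangle$ to be proportional to $\langle x\mid c\rangle$.

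The delicate point is the passage from $y$ being a singularity of $Q$ to the equality $R=R_y$: it relies on the formula of the first assertion being valid at every $y\in\mathcal S^{n-1}$ (not just quasi-singular ones) as the orthogonal projection of $P$ onto $\Delta_y$, a feature visible in the derivation because minimality of $c$ is used only through $\dist(P,\Delta_y)\geq \dist(P,\Delta)=\|R\|$. Some care is also needed in degenerate situations, notably when $\nabla^T P(c)=0$ (where the rotation $h$ need only satisfy $h(0,\dots,0,1)=c$) and when $P(c)=0$ (where $R$ vanishes because $P$ already lies in $\Delta$), so that the statement is interpreted consistently.
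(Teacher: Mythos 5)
Your proof is correct and, in two places, takes a route that genuinely differs from the paper's.

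For the uniqueness of singularities, the paper factors $R(x) = -\langle x\mid c\rangle^{d-1}\langle x\mid P(c)c + \nabla^T P(c)\rangle$ and argues that for $d\geq 3$ the hyperplane $\langle x\mid c\rangle = 0$ appears with multiplicity $d-1$ and is therefore determined by $R$, forcing $c'=\pm c$. Your divisibility and degree-count formulation ($\langle x\mid c\rangle^{d-1}\langle x\mid y\rangle^{d-1}$ divides the degree-$d$ polynomial $R$, hence $2(d-1)\leq d$) is the same idea made cleaner and more uniform; in particular, it correctly isolates $d=2$ as the only remaining case and makes explicit the observation that $R_y$ is the projection of $-P$ onto $\Delta_y^\perp$ for arbitrary $y$, a fact the paper uses implicitly. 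For $d=2$ the paper defers to the quasi-cusp analysis that appears only afterwards (and explicitly notes it is repairing a forward reference); your direct computation $\delta_P(x) = 2\|Mx\|^2 - \langle Mx,x\rangle^2 = \mathbb{E}[\lambda^2]+\mathrm{Var}(\lambda)$ and the conclusion that any global minimiser of $\delta_P$ on the sphere must be an eigenvector of $M$ is self-contained and structurally cleaner. One small imprecision: equality in $\mathbb{E}[\lambda^2]+\mathrm{Var}(\lambda)\geq\min_i\lambda_i^2$ requires $\mathrm{Var}(\lambda)=0$, i.e.\ concentration on a single eigenvalue, not merely on indices attaining $\min_i|\lambda_i|$ (which could mix $+\mu$ and $-\mu$); this does not affect your conclusion since $\mathrm{Var}(\lambda)=0$ is exactly what yields the eigenvector property. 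Your closing remarks about the degenerate cases ($\nabla^T P(c)=0$ and $P\in\Delta$, where $R=0$ and the degree argument is vacuous) appropriately flag the implicit standing hypothesis $P\notin\Delta$.
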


\begin{proof}
The formula for $R(x)$
is a consequence of the equation \ref{bombdisteq} established in the
proof of theorem \ref{bombdist} (given just after the
theorem).

Let us assume that $Q$ has another singularity $c' \neq c$ and $c'
\neq -c$ on the unit sphere (recall that we imposed quasi-singular
point to lie on the unit sphere). This
means that $\dist(P,\Delta_c) = \dist(P,\Delta_{c'})$, $Q$ lying at
the intersection of $\Delta_c$ and $\Delta_{c'}$.

We can therefore write $Q(x) = P(x) + S(x)$, where $S$ is the contact radius at $c'$:
$$S(x) = - P(c') \langle x \mid c' \rangle^d - \langle x \mid \nabla^T
P(c') \rangle \langle x \mid c' \rangle^{d-1} .$$

We necessarily have $S = R$. It remains to show that this is impossible. We have:
\begin{align*}
R(x) &= - \langle x \mid c \rangle^{d-1} \langle x \mid P(c) c +
\nabla^T P(c) \rangle \\
S(x) &= - \langle x \mid c' \rangle^{d-1} \langle x \mid P(c') c' +
\nabla^T P(c') \rangle \\
\end{align*}

When $d \geq 3$, 
the hyper-surface $R(x) = 0$ contains the plane   $\langle x \mid c
\rangle = 0$ with multiplicity $d-1$ union the plane 
 $\langle x \mid P(c) c +  \nabla^T P(c)\rangle = 0$ with multiplicity
one. $S(x) = 0$ uses that same plane with $c$ replaced by $c'$,  which imposes $c = c'$ or $c = -c'$.

When $d = 2$, we will show in the study of quasi-cusp point that they
exist only from degree $3$, hence we know that we only have
quasi-double points, which means that  $\nabla^T P(c) = \nabla^T P(c')
= 0$. Therefore, $R$
and $S$ become:
\begin{align*}
R(x) &= - P(c) \langle x \mid c \rangle^{d} \\
S(x) &= - P(c')\langle x \mid c' \rangle^{d} \\
\end{align*}

And again, $R = S$ implies $c=c'$ or $c =-c'$.
\end{proof}

\subsection{Study of quasi-double points}

Let $P \in \mathbb E$ be an homogeneous polynomial of degree $d$ with
$n$ variables. Let $c$ be a quasi-double point for $P$, meaning that
we have $\nabla^T P(c) = 0$ and $\dist^2(P,\Delta) = P(c)^2 > 0$.

The Bombieri norm being invariant
by the orthogonal group, using a rotation we can assume that $c =
(0,\dots,0,1)$ and that the matrix $\mathcal H^T P(c)$ is diagonal.

Knowing that $\nabla^T P(c) = 0$, we can write:
$$
P(x) = \alpha x_n^d + \frac{1}{2} \sum_{1 \leq i < n} \lambda_1 x_i^2 x_n^{d-2} + T(x) \hbox{ with } \alpha = P(c)\hbox{ and }
\lambda_i =  \frac{\partial^2
  P}{\partial x_i^2}(c)
$$
with no monomial of degree $\leq 2$ in $x_1,...,x_{n-1}$ in $T(x)$,
i.e. $T$ has valuation at least $3$ in $x_1,...,x_{n-1}$.

Then, by theorem \ref{contactradius}, the contact radius is 
$$R(x) = - \alpha \langle x \mid c \rangle^d$$

and the contact polynomial is

$$Q(x) = P(x) + R(x) = \frac{1}{2} \sum_{1 \leq i < n}  \lambda_i x_i^2 x_n^{d-2} + T(x)
$$

The singularity at $c$ of the variety $\{ x \in {\mathcal S}^{n-1} | Q(x)
= 0\}$ is at least a double point (justifying the name quasi-double
point) and it has no other singularities by theorem~\ref{contactradius}.

Next, we will reveal some constraints on the eigenvalues $\lambda_i = \frac{\partial^2
  P}{\partial x_i^2}(c)$ of the hessian matrix. For this, we consider the point $$c_h =
\frac{1}{\sqrt{1+h^2}}(h,0,\dots,0,1)$$ and compute 
$(\dist^2(P,\Delta_{c_h}) - \dist^2(P,Q))(1+h^2)^d$ which is non negative
 because $\dist(P,\Delta_{c_h}) \geq \dist(P,Q)$.

\begin{align*}
\left(\dist^2(P\right.&,\Delta_{c_h}) -
\left.\dist^2(P,Q)\right)(1+h^2)^d \\ 
=& \;  \left((1-d) P^2(c_h) + \frac{\|\nabla P(c_h)\|^2}{d} - P(c)^2\right)(1+h^2)^d \\
=& \; (1-d) P^2(h,0,\dots,0,1) \\
 & + \frac{\|\nabla P(h,0,\dots,0,1)\|^2}{d}(1+h^2) - P(c)^2(1+h^2)^d \\
=& \; (1-d)(\alpha + \frac{1}{2}\lambda_1h^2 + o(\|h\|^2))^2\\
 & + \frac{\left(d \alpha + (d-2)
    \frac{1}{2}\lambda_1 h^2 + o(\|h\|^2)\right)^2 + \left(\lambda_1 h +
    o(\|h\|)\right)^2}{d}(1+h^2) \\
 & - \alpha^2(1+d h^2+o(\|h\|^2)) \\
=& \; ((1-d) + d - 1)\alpha^2 +  ((1 - d) + (d - 2))\alpha \lambda_1
  h^2 +  \frac{1}{d} \lambda_1^2 h^2 \\
& + d\alpha^2h^2 - d\alpha^2h^2 + o(\|h\|^2) \\
=& \; \left(- \alpha \lambda_1 + \frac{1}{d} \lambda_1^2\right) h^2 + o(\|h\|^2)
\end{align*}

Therefore, $\dist(P,\Delta_{c_h}) > \dist(P,\Delta)$ implies:

$$ \lambda_1 \left(\lambda_1  -  d\alpha\right)  \geq 0$$

The same is true for all the eigenvalues and this means that
when $\lambda_i$ and $P(c)$ have the same sign then 
$|\lambda_i| \geq d |P(c)|$ (recall that by definition $\alpha = P(c)$).

%% Moreover, if we define $c_{h_1,\dots,h_n} = **)
%% (\sqrt{1-h_1^2-\dots-h_n^2},h_1,\dots,h_n)$, we find be the same **)
%% computation: **)

%% $$\dist^2(P,\Delta_{c_{h_1,\dots,h_n}}) = **)
%% P^2(c) + \sum_{1\leq i\leq n}\lambda_i \left(\frac{\lambda_i}{d} - **)
%% P(c)\right)  h_i^2 + o(\|h\|_1^2 + \dots + h_n^2)$$ **)

%% This means we can state the following: **)
%% \begin{remark} **)
%% When for all $i$ we either have $\lambda_i P(c) < 0$ **)
%% or $|\lambda_i| > d |P(c)|$, then $c$ is a local minima **)
%% of $c \mapsto \dist(P,\Delta_c)$. **)
%% \end{remark} **)

This study establishes the following theorem:

\begin{theorem}\label{quasidouble}
Let $P \in \mathbb E$ be an homogeneous polynomial of degree $d$ with
$n$ variables, let $c$ be a quasi-double point for $P$ and $Q$ a corresponding contact
polynomial at $c$. Then, the contact radius is 
$$
R(x) = - P(c)\langle x | c\rangle^d
$$
The contact polynomial $Q(x) = P(x) + R(x)$ has only one singularity in $c$
on ${\mathcal S}^{n-1}$ which is at
least a double-point. 

Moreover, if $\lambda$ is an eigenvalue of $\mathcal H^T P(x)$
with the same sign than $P(c)$, then $|\lambda| \geq d|P(c)| > 0$.
\end{theorem}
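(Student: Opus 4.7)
The first two assertions come essentially for free from Theorem \ref{contactradius}: since $c$ is a quasi-double point we have $\nabla^T P(c) = 0$, so the general contact-radius formula $R(x) = -P(c)\langle x \mid c\rangle^d - \langle x \mid \nabla^T P(c)\rangle \langle x \mid c\rangle^{d-1}$ collapses to $-P(c)\langle x\mid c\rangle^d$, and the same theorem rules out any singularity of $Q = P+R$ on $\mathcal S^{n-1}$ other than $\pm c$.

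For the eigenvalue bound I would use the orthogonal invariance of the Bombieri norm to rotate $c$ to $e_n = (0,\dots,0,1)$ while simultaneously diagonalising $\mathcal H^T P(c)$ along $e_1,\dots,e_{n-1}$. Combined with $\nabla^T P(c) = 0$, this puts $P$ into the normal form
$$P(x) = \alpha\, x_n^d + \tfrac{1}{2}\sum_{1 \leq i < n} \lambda_i\, x_i^2 x_n^{d-2} + T(x)$$
with $\alpha = P(c)$, $\lambda_i$ the eigenvalue of $\mathcal H^T P(c)$ along $e_i$, and $T$ of valuation at least $3$ in $x_1,\dots,x_{n-1}$.

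Because $c$ is a quasi-singular point, $x \mapsto \dist^2(P,\Delta_x)$ attains its minimum on $\mathcal S^{n-1}$ at $c$. I would probe this minimality along each eigen-direction by setting $c_h = (1+h^2)^{-1/2}(h\, e_i + e_n)$ and Taylor-expanding $(1+h^2)^d\bigl(\dist^2(P,\Delta_{c_h}) - \alpha^2\bigr)$ in $h$, using formula (\ref{bombintereqdeux}) to express $\dist^2(P,\Delta_{c_h})$ through $P$ and $\nabla P$ evaluated at $h\, e_i + e_n$. The constant terms cancel, the linear term vanishes because $\nabla^T P(c) = 0$, and after cancelling the $\alpha^2 h^2$ contributions the quadratic coefficient reduces to $\tfrac{1}{d}\lambda_i(\lambda_i - d\alpha)$. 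Non-negativity forces $\lambda_i(\lambda_i - d\alpha) \geq 0$.

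This inequality forbids $\lambda_i$ from lying strictly between $0$ and $d\alpha$; whenever $\lambda_i$ shares the sign of $\alpha = P(c)$ it therefore satisfies $|\lambda_i| \geq d|P(c)|$, proving the last claim. The only real work is bookkeeping inside the Taylor expansion: three distinct $O(h^2)$ contributions appear (from $(1-d)P^2$, from $\tfrac{1}{d}(1+h^2)\|\nabla P\|^2$, and from $-\alpha^2(1+h^2)^d$), and one must verify that every $\alpha^2 h^2$ term annihilates so that exactly $\lambda_i(\lambda_i - d\alpha)h^2/d$ survives.
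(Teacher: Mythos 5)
Your proposal is correct and takes essentially the same route as the paper: the contact radius and the uniqueness of the singularity are read off from Theorem \ref{contactradius} using $\nabla^T P(c)=0$, and the eigenvalue bound is obtained by the identical second-order probe $c_h=(1+h^2)^{-1/2}(h\,e_i+e_n)$ of the minimality of $x\mapsto\dist^2(P,\Delta_x)$ at $c$, whose quadratic coefficient $\tfrac{1}{d}\lambda_i(\lambda_i-d\alpha)$ must be non-negative. Nothing essential is missing.
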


\subsection{Study of quasi-cusp point}

Let $P \in \mathbb E$ be an homogeneous polynomial of degree $d$ with
$n$ variables. Let $c$ be a quasi-cusp point for $P$, meaning that
we have $\nabla^T P(c) \neq 0$ and $\mathcal H^T P(c) \nabla^T P(c) =
0$.

The Bombieri norm being invariant
by the orthogonal group, using a rotation we can assume that $c =
(0,\dots,0,1)$ and that the matrix $\mathcal H^T P(c)$ is diagonal and
that $(0,1,0,\dots,0)$ is the direction of $\nabla^T P(c)$ which is an
eigenvector of $\mathcal H^T P(c)$.

We can write:
$$
P(x) = \alpha x_n^d + \beta x_1x_n^{d-1} + \frac{1}{2}\sum_{2 \leq i < n} \lambda_i x_i^2 x_n^{d-2} +  \frac{1}{6}\mu_1 x_1^3 x_n^{d-3} + T(x)
$$
with $\alpha = P(c)$, $\beta = \frac{\partial
  P}{\partial x_1}(c)$, $\lambda_i =    \frac{\partial^2
  P}{\partial x_i^2}(c)$, $\mu_1 =  \frac{\partial^3
  P}{\partial x_1^3}(c)$ and no monomial of degree $\leq 2$ in $x_1,...,x_{n-1}$, nor $x_1^3 x_n^{d-3}$ in $T(x)$.

The fact that the coefficient of $x_1^2 x_n^{d-1}$ is null
is the condition $\mathcal H^T P(c) \nabla^T P(c) =
0$.
 
Then, by theorem \ref{contactradius}, the contact radius is 
$$R(x) = - \alpha x_n^d - \beta x_1x_n^{d-1}  $$

and the contact polynomial is

$$Q(x) = P(x) + R(x) = \sum_{2 \leq i < n   } \frac{1}{2} \lambda_i x_i^2
x_n^{d-2} +  \frac{1}{6}\mu_1 x_1^3 x_n^{d-3} + T(x)
$$

The singularity at $c$ of $Q(x) = 0$ on the
unit sphere is at least a cusp (justifying the name quasi-cusp
point) and it has no other singularities by theorem~\ref{contactradius}.

We now use a computation similar to the previous case to reveal
a constraint on $\mu_1$.
For this, we consider the point $c_h =
\frac{1}{\sqrt{1+h^2}}(h,0,\dots,0,1)$ and compute 
$(\dist^2(P,\Delta_{c_h}) - \dist^2(P,Q))(1+h^2)^d$ which is non negative because $\dist(P,\Delta_{c_h}) \geq \dist(P,Q)$.

\begin{align*}
\left(\dist^2(P\right. &,\Delta_{c_h})- \left.\dist^2(P,Q)\right)(1+h^2)^d\\ &=
\left((1-d) P^2(c_h) + \frac{\|\nabla P(c_h)\|^2}{d} - P(c)^2 -  \frac{\|\nabla^T P(c)\|^2}{d}\right)(1+h^2)^d \\
&= 
(1-d) P^2(h,0,\dots,0,1) + \frac{\|\nabla
    P(h,0,\dots,0,1)\|^2}{d}(1+h^2) \\ &- \left(\alpha^2 +  \frac{\beta^2}{d}\right)(1+h^2)^d \\
&= (1-d)(\alpha + \beta h + o(\|h\|^2))^2\\ &+ \frac{(d \alpha + (d-1)
    \beta h + o(\|h\|^2))^2 + (\beta + \frac{1}{2} \mu_1 h^2 + o(\|h\|^2))^2}{d}(1+h^2)
  \\ &- \left(\alpha^2 +  \frac{\beta^2}{d}\right)(1+d h^2 +o(\|h\|^2)) \\
&= ((1-d) + d - 1)\alpha^2 +  \left(\frac{1}{d} - \frac{1}{d}\right)\beta^2 +
 (2(1 - d) + 2(d - 1))\alpha \beta
  h \\ &+  \left((1 - d) + \frac{(d-1)^2}{d} + \frac{1}{d} - 1\right) \beta^2 h^2 +
  \frac{1}{d} \beta \mu_1 h^2 + o(\|h\|^2) \\
&= \left(\frac{2 - 2d}{d} \beta^2 + \frac{1}{d} \beta\mu_1\right) h^2 + o(\|h\|^2)
\end{align*}

Therefore, $\dist(P,\Delta_{c_h}) > \dist(P,\Delta)$ implies:

$$\beta (2 (1-d) \beta + \mu_1) \geq 0$$

This forces $\beta \mu_1 > 0$ hence $\mu_1 \neq 0$ (because $d =
2$).
 
We remark that if $d = 2$, then $\mu_1 = 0$ and 
together with $\beta \neq 0$, this implies $\dist(P,\Delta_{c_h}) <
\dist(P,\Delta)$ for $h$ small enough.
This proves that quasi-cusp points exist only when $d > 2$. This computation does not requires theorem~\ref{contactradius} (we just use
the fact that $c$ is a local minima of $\delta_P$). This
fills the gap in the proof of theorem~\ref{contactradius} for the degree $2$.

It remains to explicit the constraints on the eigenvalues $\lambda_i = \frac{\partial^2
  P}{\partial x_i^2}(c)$ for $2 \leq i < n$. They change compared to the
case of quasi-double points. In this case, we have to take into
account the coefficient of $x_1 x_i^2 x_n^{d-3}$ for $2 \leq i < n$ which is $\frac{1}{2} \mu_i$ with $\mu_i = \frac{\partial^3
  P}{\partial x_1 \partial x_i^2}(c)$. 

For this, we consider the point $c_h =
\frac{1}{\sqrt{1+h^2}}(0,h,0,\dots,0,1)$ and compute 
$(\dist^2(P,\Delta_{c_h}) - \dist^2(P,Q))(1+h^2)^d$ which is non negative
because $\dist(P,\Delta_{c_h}) \geq \dist(P,Q)$.

\begin{align*}
\left(\dist(P\right. &,\Delta_{c_h}) - \left.\dist(P,Q)\right)(1+h^2)^d\\ &=
\left((1-d) P^2(c_h) + \frac{\|\nabla P(c_h)\|^2}{d} - P(c)^2 -  \frac{\|\nabla^T P(c)\|^2}{d}\right)(1+h^2)^d \\
&=
(1-d) P^2(0,h,0,\dots,0,1) + \frac{\|\nabla
    P(0,h,0,\dots,0,1)\|^2}{d}(1+h^2) \\ &- \left(\alpha^2 +  \frac{\beta^2}{d}\right)(1+h^2)^d \\
&= (1-d)(\alpha + \frac{1}{2} \lambda_2 h^2 + o(\|h\|^2))^2
\\ & + \frac{(d
  \alpha + (d-2) \frac{1}{2}
    \lambda_2 h^2 + o(\|h\|^2))^2}{d}(1+h^2)
\\ & + \frac{
   (\beta + \frac{1}{2} \mu_2 h^2 + o(\|h\|^2))^2 + (\lambda_2 h + o(\|h\|^2))^2}{d}(1+h^2)
  \\ &- \left(\alpha^2 +  \frac{\beta^2}{d}\right)(1+dh^2+o(\|h\|^2)) \\
&= ((1-d) + d - 1)\alpha^2 +  \left(\frac{1}{d} - \frac{1}{d}\right)\beta^2 +
 ((1 - d) + (d - 2))\alpha \lambda_2
  h^2 \\ &+ \left(\frac{1}{d} - 1\right)\beta^2 h^2 + \frac{1}{d}\lambda_2^2 h^2 +  \frac{1}{d}\beta\mu_2 h^2 + o(\|h\|^2) \\
&= \frac{1}{d}((1 - d) \beta^2 - d\alpha\lambda_2+  \lambda_2^2+ \beta\mu_2) h^2 + o(\|h\|^2)
\end{align*}
% (d=3)
% (-1 lambda_2 alpha + -2/3 beta^2 + 1/3 mu_2 beta + 1/3 lambda_2^2)  
% (d=4)
% (-1 lambda_2 alpha + -3/4 beta^2 + 1/4 mu_2 beta + 1/4 lambda_2^2)

Therefore, $\dist(P,\Delta_{c_h}) > \dist(P,\Delta)$ implies:

$$(1 - d) \beta^2 - d\alpha\lambda_2 +  \lambda_2^2+ \beta\mu_2  \geq 0$$

Hence, if $\lambda_2 = 0$ we have $\mu_2 \neq 0$ with the same sign as
$\beta$.

By symmetry, the same holds for $\lambda_i$ with $i \geq 2$.
Moreover, up to reordering, we may assume that $\lambda_2 = \dots =
\lambda_k = 0$ and that $\lambda_i \neq 0$ for $k < i < n$. In fact, $k+1$ is
the dimension of the kernel of the matrix ${\mathcal H}^T P$, this is
  at least 2, because in contains at least $(1,0,\dots,0)$ and
  $(0,\dots,0,1)$.

Then, we
consider the hessian matrix of $\frac{\partial P}{\partial x_1}$,
restricted to the variables $x_{1},\dots,x_{k}$ and consider a 
change of coordinates such that this matrix is diagonal. In such a
coordinates system, we can write:

$$
P(x) = \alpha x_n^d + \beta x_1x_n^{d-1} + \frac{1}{2} \sum_{k < i < n}
\lambda_i x_i^2 x_n^{d-2} + \frac{1}{6} \mu_1 x_1^3 x_n^{d-3} +
\frac{1}{2} \sum_{2 \leq i \leq k} \mu_i x_1 x_i^2 x_n^{d-3} + T(x)
$$
where $T$ has no monomial of degree less than $3$ in 
$x_1,\dots,x_{n-1}$ and no monomial of degree $3$ in
$x_1,\dots,x_{n-1}$, using only the variables $x_1,\dots,x_{k}$.

This study allows us to state the following theorem:

\begin{theorem}\label{cuspcase}
Let $P \in \mathbb E$ be an homogeneous polynomial of degree $d$ with
$n$ variables. Let $c$ be a quasi-cusp point for $P$. Then, the contact
radius at $c$ is the polynomial 
$$R(x) = - P(c) \langle x \mid c \rangle^d - \langle x \mid \nabla^T
P(c) \rangle \langle x \mid c \rangle^{d-1} .$$

The contact polynomial $Q(x) = P(x) + R(x)$ has only one singularity in $c$ which is at
least a cusp.

We also have
$$
\mathcal H^T P(c).\nabla^T P(c) = 0 \hbox{ and } \nabla^T P(c) \neq 0
$$

Moreover, we can choose coordinates where 
$c = (0,\dots,0,1)$, $\nabla^T P(c) = (\beta,0,\dots,0)$ and $k + 1
\geq 2$ is
the dimension of the kernel of the matrix ${\mathcal H}^T P(c)$ ($c$
and $\nabla^T P(c)$ are in the kernel of ${\mathcal H}^T P(c)$) and
$$
P(x) = \alpha x_n^d + \beta x_1x_n^{d-1} + \frac{1}{2} \sum_{k < i < n}
\lambda_i x_i^2 x_n^{d-2} + \frac{1}{6} \mu_1 x_1^3 x_n^{d-3} +
\frac{1}{2} \sum_{2 \leq i \leq k} \mu_i x_1 x_i^2 x_n^{d-3} + T(x)
$$
where $T$ has no monomial of degree less than $3$ in 
$x_1,\dots,x_{n-1}$ and no monomial of degree $3$ in
$x_1,\dots,x_{n-1}$, using only the variables $x_1,\dots,x_{k}$.
We also have the following constraints:
\begin{itemize}
\item $\beta, \mu_1, \dots, \mu_k$ are non zero and have the same
  sign and
\item $(1 - d) \beta^2 -d\alpha\lambda_i +  \lambda_i^2+ \beta\mu_i
  \geq 0$ for $2 \leq i < n$ where $\mu_i = \frac{\partial^3
    P}{\partial x_1 \partial^2 x_i}(c)$.
\item $\lambda_i \neq 0$ for $k < i < n$.
\end{itemize}
\end{theorem}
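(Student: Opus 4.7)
The plan is to assemble the conclusions already scattered through the preceding computations into a single statement. The formula for the contact radius at a quasi-singular point, and the fact that $Q = P + R$ has no other singularity on $\mathcal S^{n-1}$ than $\pm c$, come for free from theorem \ref{contactradius}. The conditions $\nabla^T P(c) \neq 0$ and $\mathcal H^T P(c)\cdot\nabla^T P(c) = 0$ are respectively the defining property of a quasi-cusp and the necessary condition (\ref{hesseq}) for any quasi-singular point. Since $c$ itself always lies in $\ker \mathcal H^T P(c)$, together with $\nabla^T P(c)$ we have two linearly independent vectors in that kernel, so $k+1 \geq 2$.

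Next I would fix coordinates using the orthogonal invariance of the Bombieri norm. Send $c$ to $(0,\dots,0,1)$, point $\nabla^T P(c)$ along $(1,0,\dots,0)$ (possible because it is a null eigenvector of the symmetric matrix $\mathcal H^T P(c)$), and diagonalise the restriction of $\mathcal H^T P(c)$ to the orthogonal complement of $\nabla^T P(c)$ in the tangent hyperplane. Writing the resulting Taylor expansion at $c$ and using that the coefficient of $x_1^2 x_n^{d-2}$ vanishes (this is $\mathcal H^T P(c)\nabla^T P(c) = 0$) gives the stated normal form, with $\alpha,\beta,\lambda_i,\mu_1,\mu_i$ identified with the appropriate partial derivatives.

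The substantive work is to convert the local minimality of $x \mapsto \dist^2(P,\Delta_x)$ at $x=c$ into algebraic constraints. For $\mu_1$ I run the perturbation $c_h = (h,0,\dots,0,1)/\sqrt{1+h^2}$ and expand $(\dist^2(P,\Delta_{c_h}) - \dist^2(P,\Delta))(1+h^2)^d$ to order $h^2$; the leading coefficient is $\beta(2(1-d)\beta + \mu_1)/d$, whose nonnegativity forces $\mu_1 \neq 0$ with the same sign as $\beta$. The very same expansion applied with $d = 2$ (where $\mu_1$ does not exist) produces a strictly negative coefficient whenever $\beta \neq 0$, which is the promised proof that quasi-cusps appear only for $d \geq 3$ and simultaneously fills the $d=2$ gap in theorem \ref{contactradius}, since this computation uses only that $c$ is a local minimum of $\delta_P$.

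For the remaining constraints I perturb along $c_h = (0,\dots,0,h,0,\dots,0,1)/\sqrt{1+h^2}$ with $h$ in the $i$th slot, $2 \leq i < n$. A parallel second-order expansion yields the leading coefficient $((1-d)\beta^2 - d\alpha\lambda_i + \lambda_i^2 + \beta\mu_i)/d$, which must be $\geq 0$: this is the displayed inequality. When $\lambda_i = 0$ it reduces to $\beta\mu_i \geq (d-1)\beta^2 > 0$, so each such $\mu_i$ is nonzero and has the same sign as $\beta$. After relabelling so that $\lambda_2 = \dots = \lambda_k = 0$ and $\lambda_i \neq 0$ for $k < i < n$, a further orthogonal change of coordinates inside the span of $e_2,\dots,e_k$ diagonalises the restricted Hessian of $\partial P/\partial x_1$ and produces the final normal form. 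The main obstacle throughout is the second-order bookkeeping: one must keep $c_h$ exactly on the unit sphere (hence the denominator $\sqrt{1+h^2}$), multiply the deficit by $(1+h^2)^d$ before reading off the $h^2$-coefficient, and check that the lower-order terms cancel so that only the claimed monomials survive; beyond that, everything is elementary.
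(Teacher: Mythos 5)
Your proposal is correct and follows the same line of argument as the paper: cite theorem \ref{contactradius} for the contact radius and the uniqueness of the singularity, normalise coordinates via the orthogonal invariance of the Bombieri norm so that $c=(0,\dots,0,1)$, $\nabla^T P(c)$ points along $e_1$ and $\mathcal H^T P(c)$ is diagonal, then extract the $h^2$-coefficients of $\bigl(\dist^2(P,\Delta_{c_h})-\dist^2(P,\Delta)\bigr)(1+h^2)^d$ for the two families of perturbations $c_h$, and finally reorder and rediagonalise the restricted Hessian of $\partial P/\partial x_1$ on $\ker\mathcal H^T P(c)$. This matches the paper's computations, including the observation that the $d=2$ case forces a strictly negative coefficient and hence that quasi-cusps need $d\geq 3$, which retroactively closes the degree-$2$ gap in theorem \ref{contactradius}.
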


%<!-- Local IspellDict: british -->
%<!-- Local IspellPersDict: ~/.ispell-british -->

\section{Application to extremal hyper-surfaces}

\begin{definition}[Extremal and maximal hyper-surfaces]
An hyper-surface on the projective space or the unit sphere of dimension $n-1$ defined by an equation $P(x) =
0$ where $P$ is an homogeneous polynomial of degree $d$ in $n$ variables 
is \emph{extremal} if the tuple of its Betti numbers $(b_0, \dots, b_{n-2})$
is maximal for pointwise ordering for such polynomials. 

We say that such an hyper-surface is \emph{maximal} when the sum of
its Betti numbers is maximal.
\end{definition}

Remark: considering the same polynomial on the projective space or the
sphere just doubles the Betti numbers.

The next theorem also applies to locally extremal surface:
\begin{definition}[Locally extremal hyper-surfaces]
An algebraic hyper-surface $\mathcal H$ in the projective plane or the unit sphere
of dimension $n-1$ is \emph{locally
extremal} if there exists no algebraic hyper-surface of the same degree
isotopic to $\mathcal H$ with 
a disc $D^{n-1}$ (whose border is $S^{n-2}$) replaced by another
surface with the same border and greater Betti numbers than the disc. This
definition includes the addition of new connected components.
\end{definition}

It is clear that an extremal hyper-surface is locally extremal (because
doing a connected sum mostly corresponds to adding Betti numbers). But
the converse is not true in general. For instance the plane sextic
curve with nine ovals where 2 or 6 lie in another oval are locally extremal
but not extremal, nor maximal.

\begin{theorem}\label{extremal}
Let $P \in \mathbb E$ be an homogeneous polynomial of degree $d$ in
$n$ variables. Assume that the zero level of $P$ on the unit sphere is
smooth and locally extremal. Then, we have:
\begin{itemize}
\item $P$ admit no quasi-cusp point.
\item If $c$ is a quasi-double point of $P$, then at least one of the
  eigenvalue $\lambda$ of $\mathcal H^T P(c)$ for an eigen vector
  distinct from $c$ itself satisfies $\lambda P(c)
  \leq 0$ (we always have $(\mathcal H^T P(c)) c = 0$ by definition of $\mathcal H^T$).
\end{itemize}
\end{theorem}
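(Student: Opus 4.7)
The plan is to prove both conclusions by contradiction, in each case constructing an algebraic hyper-surface of degree $d$ obtained from $\{P=0\}$ by a local modification inside a small neighborhood of $c$ that adds one connected component; such a modification is forbidden by local extremality.

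For the quasi-double part, I assume all eigenvalues of $\mathcal H^T P(c)$ on directions transverse to $c$ share the sign of $\alpha := P(c)$. In the coordinates of Theorem \ref{quasidouble} the contact polynomial $Q = P - \alpha\langle x|c\rangle^d$ behaves near $c$ like the definite quadratic form $\frac{1}{2}\sum \lambda_i x_i^2$ plus higher-order terms, so $\{Q=0\}$ reduces to the isolated point $c$ in a neighborhood of $c$. Introducing $P_\eta := P - (\alpha+\eta)\langle x|c\rangle^d$ for $\eta$ small with $\eta\alpha>0$, the local equation becomes $\frac{1}{2}\sum \lambda_i x_i^2 = \eta$ up to higher order, whose zero locus is an $(n-2)$-sphere of diameter $O(\sqrt{|\eta|})$, a brand-new connected component. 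Along the segment $\tau\mapsto P - \tau(\alpha+\eta)\langle x|c\rangle^d$ the only singularity on the unit sphere forms at $c$ when $\tau = \alpha/(\alpha+\eta)\in(0,1)$, a purely Morse elliptic singularity by the eigenvalue hypothesis, and for $\eta$ small this is the only intersection with $\Delta$. Hence $\{P_\eta=0\}$ is isotopic to $\{P=0\}$ outside a neighborhood of $c$ but contains this extra ellipsoidal component inside, contradicting local extremality.

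For the quasi-cusp part I use the normal form of Theorem \ref{cuspcase} together with the two-parameter unfolding
\begin{equation*}
P_{s,t} := Q + s \langle x|c\rangle^d + t \langle x|c\rangle^{d-1} x_1,
\end{equation*}
where $Q=P+R$ is the contact polynomial. Near $c$, after discarding the transverse Morse part $\frac{1}{2}\sum_{k<i<n}\lambda_i x_i^2$, the equation $P_{s,t}=0$ reduces, up to higher-order terms, to $y^2 = -C(x^3+\tilde a x+\tilde b)$ with $C=\mu_1/(3\lambda_2)$, $\tilde a=6t/\mu_1$, $\tilde b=6s/\mu_1$; the sign of the cubic discriminant divides the $(s,t)$-plane into a ``single branch'' region and a ``branch + oval'' region. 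The evaluation $(s,t)=(\alpha,\beta)$ recovers $P$, and at this point the cubic discriminant equals $-864\beta^3/\mu_1^3 - 972\alpha^2/\mu_1^2$, which is strictly negative thanks to $\beta\mu_1>0$; hence $P$ lies in the single-branch region. On the other hand $(s,t)=(0,-\epsilon\,\mathrm{sign}\,\mu_1)$ for $\epsilon>0$ small yields cubic discriminant $864\epsilon^3/|\mu_1|^3>0$, placing $P_{0,t}$ in the branch-plus-oval region and creating a small oval of diameter $O(\sqrt\epsilon)$ near $c$. For $\epsilon$ small this oval lies in a ball disjoint from $\{P=0\}$, once again giving a local modification adding one connected component and contradicting local extremality.

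The main subtlety I expect, common to both parts, is verifying that the straight-line perturbation path from $P$ to the modified polynomial meets $\Delta$ only at singular loci concentrated arbitrarily close to $c$ as the perturbation parameter shrinks — so that the change in Betti numbers is purely local. This will follow from the strict inequality $\dist(P,\Delta_{c'}) > \|R\|$ for $c'$ staying at positive distance from $\pm c$ on $\mathcal S^{n-1}$ (since $c$ realises the distance to $\Delta$) combined with a routine compactness argument on the sphere.
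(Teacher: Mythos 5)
Your argument for the quasi-double case is essentially the paper's: when all transverse eigenvalues of $\mathcal H^T P(c)$ share the sign of $P(c)$, the point $c$ is a local minimum of $|P|$, hence an isolated point of the contact variety $\{Q=0\}$, and a small overshoot of the contact radius inflates that isolated point into a small ellipsoidal component. Your version is more explicit about the one-parameter family $P_\eta$, but the underlying idea and the role of Theorem~\ref{contactradius} (uniqueness of the singularity at $\pm c$) are the same.

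The quasi-cusp case, however, has a genuine gap. You write that near $c$ the equation reduces, ``after discarding the transverse Morse part $\frac{1}{2}\sum_{k<i<n}\lambda_i x_i^2$,'' to a plane cusp normal form $y^2 = -C(x^3 + \tilde a x + \tilde b)$, and then read off from the sign of the cubic discriminant that a small \emph{oval} appears. This step fails in general for two reasons. First, the transverse Hessian need not be definite: in the notation of Theorem~\ref{cuspcase}, the eigenvalues $\lambda_{k+1},\dots,\lambda_{n-1}$ may have mixed signs. A splitting lemma argument does let you write the germ as $\frac{1}{2}\sum_{k<i<n}\lambda_i x_i^2$ plus a residual function of the remaining variables, but the local topology of the zero level of the full germ is \emph{not} simply the topology of the zero level of the residual part: it is obtained by a suspension/join-type construction whose effect depends on the signature $(m-k,\, n-1-m)$ of the transverse form. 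When that form is indefinite, the vanishing set is not an oval but a more intricate object; this is precisely why the paper builds the two linked spheres $S_A$ (of dimension $m-1$) and $S_B$ (of dimension $n-m-1$) and argues via their linking number that the new piece cannot be a disc. Your cubic-discriminant dichotomy (``single branch'' vs.\ ``branch $+$ oval'') is only correct for the honest plane curve situation $n=2$, and more generally only when $m=0$ or $m=n-1$. Second, you silently drop the kernel variables $x_2,\dots,x_k$ together with the cross terms $\frac{1}{2}\sum_{2\le i\le k}\mu_i x_1 x_i^2$. These cannot be eliminated by a quadratic splitting (the transverse Hessian has a $(k+1)$-dimensional kernel), and the paper has to carry them explicitly through the Viro computation; your normal form $y^2 = -C(x^3+\tilde a x+\tilde b)$ has no room for them. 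In the presence of these extra kernel variables the cubic discriminant of the one-variable slice is not what governs the local topology.

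Finally, the ``routine compactness argument'' you defer to at the end is the real engine of the proof and is not obviously routine: you need a uniform bound $\dist(P,\Delta_{c'}) - \|R\| \geq \rho(\delta) > 0$ for $\|c'\mp c\|\geq\delta$, and then a perturbation smaller than $\rho(\delta)$, while the oval you want to produce has diameter $O(\sqrt\epsilon)$ and requires the perturbation to be of a fixed size relative to the lower-order terms you dropped. The paper sidesteps this balance by first showing $P_t$ has the same topology as $P$ for $t$ small (a pure distance estimate), then comparing $P_t^+$ and $P_t^-$ at the \emph{same} value of $t$ via Viro's theorem, so that the region where the topology changes is controlled by the Newton polytope decomposition rather than by a compactness estimate on $\dist(\cdot,\Delta_{c'})$.
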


\begin{proof}
Let $P \in \mathbb E$ be an homogeneous polynomial of degree $d$ in
$n$ variables with a smooth and
locally extremal zero locus on the unit sphere.

For the first item, assume that $c$ is a quasi-cusp of $P$, and that
$R$ and $Q$ are respectively the contact radius and polynomial of $P$
at $c$.

By theorem \ref{cuspcase}, we can find
coordinates where $c = (0,\dots,0,1)$ and
$$
P(x) = \alpha x_n^d + \beta x_1x_n^{d-1} + \frac{1}{2} \sum_{k < i < n}
\lambda_i x_i^2 x_n^{d-2} + \frac{1}{6} \mu_1 x_1^3 x_n^{d-3} +
\frac{1}{2} \sum_{2 \leq i \leq k} \mu_i x_1 x_i^2 x_n^{d-3} + T(x)
$$
with the properties given above by theorem \ref{cuspcase}
and especially $\beta \mu_i > 0$ for $1 \leq i \leq k$.

First, without loss of generality, we can assume $\alpha \geq 0$
(by considering $-P$ instead of $P$) and $\beta > 0$ (using the
transformation $x_1 \mapsto -x_1$). We furthermore reorder variables
and define $m \in \NN$ to have
\begin{itemize}
%\item for $2 \leq i \leq j \leq k$, the coefficient of $x_j^3
%  x_n^{d-2}$ is zero.
%\item for $j < i \leq k$, the coefficient of $x_j^3
%  x_n^{d-2}$ is non zero.
\item $\lambda_{k+1},\dots,\lambda_m > 0$
\item $\lambda_{m+1},\dots,\lambda_{n-1} < 0$.
\end{itemize}

We will study and change the topology of the zero level of $P$ in a
neighbourhood of the point $c:(0,\dots,0,1)$. Hence we will work till the end of the
proof with affine coordinates and set $x_n = 1$.

We will study the following families of polynomials (only the
coefficient of $x_1$ is changing):
\begin{align*}
P_t(x) = &\; \alpha t^5 + \beta t x_1 +  \sum_{2 \leq i \leq k}
t x_i^2 + \frac{1}{2} \sum_{k < i < n}
\lambda_i x_i^2 \\
& + \frac{1}{6} \mu_1 x_1^3 +
\frac{1}{2} \sum_{2 \leq i \leq k} \mu_i x_1 x_i^2  + T(x) \\
P_t^+(x) = &\; \alpha t^5 + \beta t^3 x_1 +  \sum_{2 \leq i \leq k}
t x_i^2 + \frac{1}{2} \sum_{k < i < n}
\lambda_i x_i^2 \\
& + \frac{1}{6} \mu_1 x_1^3 +
\frac{1}{2} \sum_{2 \leq i \leq k} \mu_i x_1 x_i^2  + T(x) \\
P_t^-(x) = &\; \alpha t^5 - \beta t^3 x_1 +  \sum_{2 \leq i \leq k}
t x_i^2 + \frac{1}{2} \sum_{k < i < n}
\lambda_i x_i^2 \\
& + \frac{1}{6} \mu_1 x_1^3 +
\frac{1}{2} \sum_{2 \leq i \leq k} \mu_i x_1 x_i^2  + T(x)
\end{align*}

We have $\dist^2(P,P_t) = 
\alpha^2 (1 - t^{5})^2 + \frac{\beta^2}{d} (1 - t)^2 +
\frac{2(k-1)}{d(d-1)} t^2 =  \alpha^2 + \frac{\beta^2}{d}(1 - 2t) + o(t)$ and this is smaller that $\dist^2(P,\Delta)
= \alpha^2 + \frac{\beta^2}{d}$ for $t$ small enough. This implies
that $P_t(x) = 0$ has the same topology than $P(x) = 0$ for $t \in ]0,
  \epsilon[$
for some $\epsilon > 0$ (1).

For $t$ small enough, the topologies of $P_t(x) = 0$, $P_t^+(x) = 0$
and $P_t^-(x) = 0$
 can be computed
using Viro's theorem \cite{Viro83,Viro84}. To do so, we attribute a
height to each vertex of Newton's polyhedra: if
$x_1^{\alpha_1},\dots,x_{n-1}^{\alpha_{n-1}}$ is a monomial of $P_t$,
we consider the point $(\alpha_1,\dots,\alpha_{n-1},h_\alpha) \in
\NN^n$. To simplify the discussion, we will identify the 
point  $(\alpha_1,\dots,\alpha_{n-1},h_\alpha) \in
\NN^n$ with the corresponding monomial.

All monomials are given $0$ height except
$1$ which we place at height $5$, $x_i^2$ for $2 \leq i \leq k$ which we place
at height $1$, $x_1$ which height changes among the
three families.

The triangulation needed by Viro's theorem is computed as the projection
of the convex hull of the points of Newton's
polytopes with their given height. It is easy to see that all vertices
with non zero coefficient are on the convex hull, just looking at the
axes.

In what follows, we consider that $t$ is small enough  to
have (1) and for the
topologies of $P_t(x) = 0$, $P_t^+(x) = 0$
and $P_t^-(x) = 0$ to be given by Viro's theorem, gluing the
topologies of the polynomial in each polyhedron.

Hence, we only need to consider polyhedra changing among the three
polynomials. The only vertices that belong to a polyhedra which is not the same for
the Viro's decomposition of $P_t$, $P^+_t$ and $P^-_t$ are among
\begin{itemize}
\item $1$, $x_1$, $x_1^2$, $x_1^3$,
\item $x_1 x_i^2$ for $2 \leq i \leq k$,
\item $x_i^2$ for $k < i < n$ and
\item monomials without $x_1$.
\end{itemize}

This is true because the only monomial that changes height is
$x_1$. Therefore, a changing polyhedron, that contains a monomial
$x^\alpha$ must contain a segment from $x^\alpha$ to $x_1$.
Because the vertices of $x_1^3$, $x_1 x_i^2$ for $2 \leq i \leq k$ and
$x_i^2$ for $k < i < n$ are at height $0$, if $x^\alpha$ is not among
those, it must satisfies $\alpha_1 = 0$ because otherwise the
segment joining $x^\alpha$ to $x_1$ can not have 0 height
when it crosses the simplex corresponding to
 $x_1^3$, $x_1 x_i^2$ for $2 \leq i \leq k$ and
$x_i^2$ for $k < i < n$.

This means that to study the change of topology of $P_t(x) = 0$, $P_t^+(x) = 0$
and $P_t^-(x) = 0$, we can consider that $T$ is constant in $x_1$. 

The rest of the proof is in two steps: we already know that $P(x) = 0$
and  $P_t(x) = 0$ have the same topology. It remains to show that
 $P_t(x) = 0$, $P_t^+(x) = 0$ also have the same topology and that 
$P_t^-(x) = 0$, compared to $P_t^-(x) = 0$, has at least two Betti numbers that increase while the
others are non decreasing.

For the first two polynomials: $P(x) = 0$
and  $P_t(x) = 0$, they have the same topology because, considering
that $T$ is constant in $x_1$, they can be written~:
\begin{align*}
 P_t(x) = \frac{1}{6} \mu x_1^3 + \left(\beta t + \frac{1}{2} 
  \sum_{1 < i \leq k} \!\!\mu_i x_i^2\right) x_1 + \left(\alpha t^5 + \frac{1}{2} 
  \sum_{2 \leq i \leq k} \!\!t x_i^2 + \frac{1}{2} 
  \sum_{k < i < n} \!\!\lambda_i x_i^2 + T(x)\right)
\end{align*}

and 

\begin{align*}
 P_t^+(x) = \frac{1}{6} \mu x_1^3 + \left(\beta t^3 + \frac{1}{2} 
  \sum_{1 < i \leq k} \!\!\mu_i x_i^2\right) x_1 + \left(\alpha t^5 + \frac{1}{2} 
  \sum_{2 \leq i \leq k} \!\!t x_i^2 + \frac{1}{2} 
  \sum_{k < i < n} \!\!\lambda_i x_i^2 + T(x)\right)
\end{align*}

In both cases, all non constant coefficients in $x_1$ are positive,
implying that the polynomial has exactly one root because its
discriminant in $x_1$ is negative. This means that $P_t(x) = 0$ and 
$P_t^+(x) = 0$ defines a graph of $x_1$ as a function of the other
variables and hence have the same topology. Moreover, the infinite
branch are the same in both cases, not changing the gluing with 
the polyhedra corresponding to neglected monomials in $T$ (those using $x_1$).

Finally, for the change of topology between $P_t^+(x) = 0$ and
$P_t^-(x) = 0$, we only need to consider the following polyhedra which
are simplices:
\begin{description}
\item[$A$] with vertices $1$, $x_1$ and $x_i^2$ for $1 < i < n$.
\item[$B$] with vertices $x_1$, $x_1^3$ and $x_i^2$ for $1 < i < n$.
\end{description}

It is easy to check that the chosen height for the monomials forces
these simplices to appear.

In the case of $P_t^+(x) = 0$, all coefficients are positive (see
figure \ref{viroone}), which leads to a disc of dimension $n-2$ inside
the polyhedra $A$ and $B$, regardless of the sign of the
coefficient $\lambda_i$, again because the discriminant is negative.

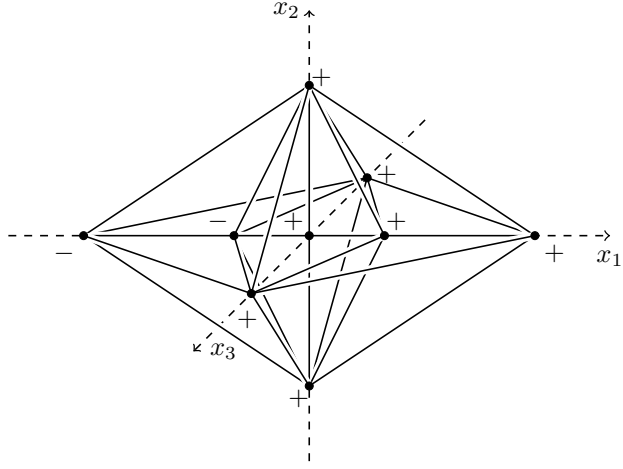
\begin{figure}
\begin{tikzpicture} 

\draw[->,dashed,semithick] (-4,0,0) -- (4,0,0); 
\draw[->,dashed,semithick] (0,-3,0) -- (0,3,0); 
\draw[->,dashed,semithick] (0,0,-4) -- (0,0,4); 

\draw (4,-0.3,0) node {$x_1$}; 
\draw (-0.3,3,0) node {$x_2$}; 
\draw (0.4,0,4) node {$x_3$}; 

\draw[semithick] (0,0,-2) -- (3,0,0); 
\draw[semithick] (0,0,-2) -- (-3,0,0); 
\draw[semithick] (0,0,-2) -- (0,2,0); 
\draw[semithick] (0,0,-2) -- (0,-2,0); 

\draw[semithick] (0,0,-2) -- (1,0,0); 
\draw[semithick] (0,0,-2) -- (-1,0,0);

%\fill[gray,opacity=0.5] (2,0,0) -- (0.5,0,-1) -- (0.5,1,0);
%\fill[gray,opacity=0.5] (0.5,0,0) -- (0.5,0,-1) -- (0.5,1,0);
%\fill[gray,opacity=0.5] (2,0,0) -- (0.5,0,-1) -- (0.5,-1,0);
%\fill[gray,opacity=0.5] (0.5,0,0) -- (0.5,0,-1) -- (0.5,-1,0);
%\fill[gray,opacity=0.5] (-1.5,0,-1) -- (-2,0,0) -- (0,3,0);
%\fill[gray,opacity=0.5] (-1.5,0,-1) -- (-2,0,0) -- (0,-3,0);

\draw[line width=3pt,white] (-3,0,0) -- (3,0,0);
\draw[line width=3pt,white] (0,-2,0) -- (0,2,0);

\draw[line width=3pt,white] (3,0,0) -- (0,2,0);
\draw[line width=3pt,white] (0,2,0) -- (-3,0,0);
\draw[line width=3pt,white] (3,0,0) -- (0,-2,0);
\draw[line width=3pt,white] (0,-2,0) -- (-3,0,0);

\draw[line width=3pt,white] (1,0,0) -- (0,2,0);
\draw[line width=3pt,white] (0,2,0) -- (-1,0,0);
\draw[line width=3pt,white] (1,0,0) -- (0,-2,0);
\draw[line width=3pt,white] (0,-2,0) -- (-1,0,0);

\draw[semithick] (-3,0,0) -- (3,0,0);
\draw[semithick] (0,-2,0) -- (0,2,0);
\draw[semithick] (3,0,0) -- (0,2,0);
\draw[semithick] (0,2,0) -- (-3,0,0);
\draw[semithick] (3,0,0) -- (0,-2,0);
\draw[semithick] (0,-2,0) -- (-3,0,0);
\draw[semithick] (1,0,0) -- (0,2,0);
\draw[semithick] (0,2,0) -- (-1,0,0);
\draw[semithick] (1,0,0) -- (0,-2,0);
\draw[semithick] (0,-2,0) -- (-1,0,0);

%\draw[red,thick] (2,0,0) -- (0.5,1,0) -- (0.5,0,0)  -- (0.5,-1,0) -- (2,0,0);
%\draw[red,thick] (0,3,0) -- (-2,0,0) -- (0,-3,0);

\draw[line width=3pt,white] (0,0,2) -- (1,0,0); 
\draw[line width=3pt,white] (0,0,2) -- (-1,0,0); 
\draw[line width=3pt,white] (0,0,2) -- (0,-2,0); 
\draw[line width=3pt,white] (0,0,2) -- (0,2,0); 
\draw[line width=3pt,white] (0,0,2) -- (3,0,0); 
\draw[line width=3pt,white] (0,0,2) -- (-3,0,0); 
\draw[semithick] (0,0,2) -- (-1,0,0); 
\draw[semithick] (0,0,2) -- (1,0,0); 
\draw[semithick] (0,0,2) -- (0,2,0); 
\draw[semithick] (0,0,2) -- (0,-2,0); 
\draw[semithick] (0,0,2) -- (3,0,0); 
\draw[semithick] (0,0,2) -- (-3,0,0);

\draw[fill=black] (0,0,0) circle (0.15em) 
	    node[above left,xshift=0.05cm,yshift=-0.05cm] {$+$}; 
\draw[fill=black] (1,0,0) circle (0.15em) 
	    node[above right,xshift=-0.1cm,yshift=-0.05cm] {$+$}; 
\draw[fill=black] (3,0,0) circle (0.15em) 
	    node[below right] {$+$}; 

\draw[fill=black] (0,0,2) circle (0.15em) 
	    node[below,xshift=-0.05cm,yshift=-0.1cm] {$+$}; 
\draw[fill=black] (0,0,-2) circle (0.15em) 
	    node[right,yshift=0.05cm] {$+$}; 

\draw[fill=black] (0,2,0) circle (0.15em) 
	    node[above right,xshift=-0.1cm,yshift=-0.13cm] {$+$}; 
\draw[fill=black] (0,-2,0) circle (0.15em) 
	    node[below left,xshift=0.12cm,yshift=0.07cm] {$+$}; 

\draw[fill=black] (-1,0,0) circle (0.15em) 
	    node[above left,xshift=0.05cm,yshift=-0.05cm] {$-$}; 
\draw[fill=black] (-3,0,0) circle (0.15em) 
	    node[below left] {$-$};

\end{tikzpicture} 
\caption{Topology of $P_t^+(x) = 0$ near $c$, with $k = n$.}\label{viroone}
\end{figure}

In the case of $P_t^-(x) = 0$, only the sign
of $x_1^2$ changes. The change is illustrated by figure \ref{virotwo}
and \ref{virothree}.

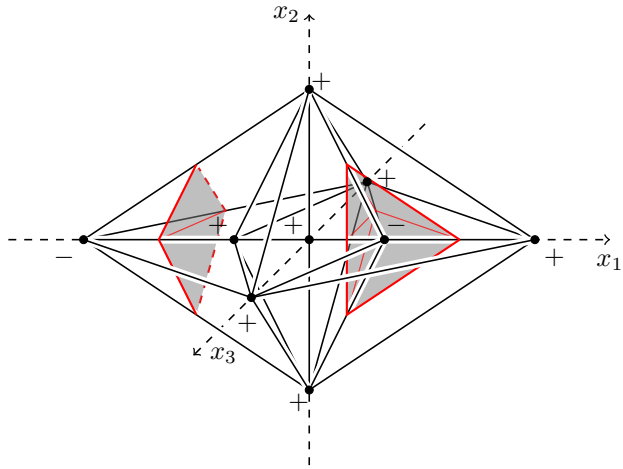
\begin{figure}
\begin{tikzpicture} 

\draw[->,dashed,semithick] (-4,0,0) -- (4,0,0); 
\draw[->,dashed,semithick] (0,-3,0) -- (0,3,0); 
\draw[->,dashed,semithick] (0,0,-4) -- (0,0,4); 

\draw (4,-0.3,0) node {$x_1$}; 
\draw (-0.3,3,0) node {$x_2$}; 
\draw (0.4,0,4) node {$x_3$}; 

\draw[semithick] (0,0,-2) -- (3,0,0); 
\draw[semithick] (0,0,-2) -- (-3,0,0); 
\draw[semithick] (0,0,-2) -- (0,2,0); 
\draw[semithick] (0,0,-2) -- (0,-2,0); 

\draw[semithick] (0,0,-2) -- (1,0,0); 
\draw[semithick] (0,0,-2) -- (-1,0,0); 

\draw[red,thick,dashed] (-1.5,1,0) -- (-1.5,0,-1) --(-1.5,-1,0);

\fill[gray,opacity=0.5] (2,0,0) -- (0.5,0,-1) -- (0.5,1,0);
\fill[gray,opacity=0.5] (0.5,0,0) -- (0.5,0,-1) -- (0.5,1,0);
\fill[gray,opacity=0.5] (2,0,0) -- (0.5,0,-1) -- (0.5,-1,0);
\fill[gray,opacity=0.5] (0.5,0,0) -- (0.5,0,-1) -- (0.5,-1,0);
\fill[gray,opacity=0.5] (-1.5,0,-1) -- (-2,0,0) -- (-1.5,1,0);
\fill[gray,opacity=0.5] (-1.5,0,-1) -- (-2,0,0) -- (-1.5,-1,0);

\draw[red,thin,opacity=0.5] (2,0,0) -- (0.5,0,-1) -- (0.5,1,0);
\draw[red,thin,opacity=0.5] (0.5,0,0) -- (0.5,0,-1) -- (0.5,1,0);
\draw[red,thin,opacity=0.5] (2,0,0) -- (0.5,0,-1) -- (0.5,-1,0);
\draw[red,thin,opacity=0.5] (0.5,0,0) -- (0.5,0,-1) -- (0.5,-1,0);
\draw[red,thin,opacity=0.5] (-1.5,0,-1) -- (-2,0,0) -- (-1.5,1,0);
\draw[red,thin,opacity=0.5] (-1.5,0,-1) -- (-2,0,0) -- (-1.5,-1,0);

\draw[line width=3pt,white] (-3,0,0) -- (3,0,0);
\draw[line width=3pt,white] (0,-2,0) -- (0,2,0);

\draw[line width=3pt,white] (3,0,0) -- (0,2,0);
\draw[line width=3pt,white] (0,2,0) -- (-3,0,0);
\draw[line width=3pt,white] (3,0,0) -- (0,-2,0);
\draw[line width=3pt,white] (0,-2,0) -- (-3,0,0);

\draw[line width=3pt,white] (1,0,0) -- (0,2,0);
\draw[line width=3pt,white] (0,2,0) -- (-1,0,0);
\draw[line width=3pt,white] (1,0,0) -- (0,-2,0);
\draw[line width=3pt,white] (0,-2,0) -- (-1,0,0);

\draw[semithick] (-3,0,0) -- (3,0,0);
\draw[semithick] (0,-2,0) -- (0,2,0);
\draw[semithick] (3,0,0) -- (0,2,0);
\draw[semithick] (0,2,0) -- (-3,0,0);
\draw[semithick] (3,0,0) -- (0,-2,0);
\draw[semithick] (0,-2,0) -- (-3,0,0);
\draw[semithick] (1,0,0) -- (0,2,0);
\draw[semithick] (0,2,0) -- (-1,0,0);
\draw[semithick] (1,0,0) -- (0,-2,0);
\draw[semithick] (0,-2,0) -- (-1,0,0);

\draw[red,thick] (2,0,0) -- (0.5,1,0) -- (0.5,0,0)  -- (0.5,-1,0) --
(2,0,0);
\draw[red,thick] (-1.5,1,0) -- (-2,0,0) -- (-1.5,-1,0);

\draw[line width=3pt,white] (0,0,2) -- (1,0,0); 
\draw[line width=3pt,white] (0,0,2) -- (-1,0,0); 
\draw[line width=3pt,white] (0,0,2) -- (0,-2,0); 
\draw[line width=3pt,white] (0,0,2) -- (0,2,0); 
\draw[line width=3pt,white] (0,0,2) -- (3,0,0); 
\draw[line width=3pt,white] (0,0,2) -- (-3,0,0); 
\draw[semithick] (0,0,2) -- (-1,0,0); 
\draw[semithick] (0,0,2) -- (1,0,0); 
\draw[semithick] (0,0,2) -- (0,2,0); 
\draw[semithick] (0,0,2) -- (0,-2,0); 
\draw[semithick] (0,0,2) -- (3,0,0); 
\draw[semithick] (0,0,2) -- (-3,0,0);

\draw[fill=black] (0,0,0) circle (0.15em) 
	    node[above left,xshift=0.05cm,yshift=-0.05cm] {$+$}; 
\draw[fill=black] (1,0,0) circle (0.15em) 
	    node[above right,xshift=-0.1cm,yshift=-0.05cm] {$-$}; 
\draw[fill=black] (3,0,0) circle (0.15em) 
	    node[below right] {$+$}; 

\draw[fill=black] (0,0,2) circle (0.15em) 
	    node[below,xshift=-0.05cm,yshift=-0.1cm] {$+$}; 
\draw[fill=black] (0,0,-2) circle (0.15em) 
	    node[right,yshift=0.05cm] {$+$}; 

\draw[fill=black] (0,2,0) circle (0.15em) 
	    node[above right,xshift=-0.1cm,yshift=-0.13cm] {$+$}; 
\draw[fill=black] (0,-2,0) circle (0.15em) 
	    node[below left,xshift=0.12cm,yshift=0.07cm] {$+$}; 

\draw[fill=black] (-1,0,0) circle (0.15em) 
	    node[above left,xshift=0.05cm,yshift=-0.05cm] {$+$}; 
\draw[fill=black] (-3,0,0) circle (0.15em) 
	    node[below left] {$-$}; 

\end{tikzpicture} 
\caption{Topology of $P_t^-(x) = 0$ near $c$  with $\lambda_3 > 0$. Only the
  rear faces are shown, rear border dashed.}\label{virotwo}
\end{figure}

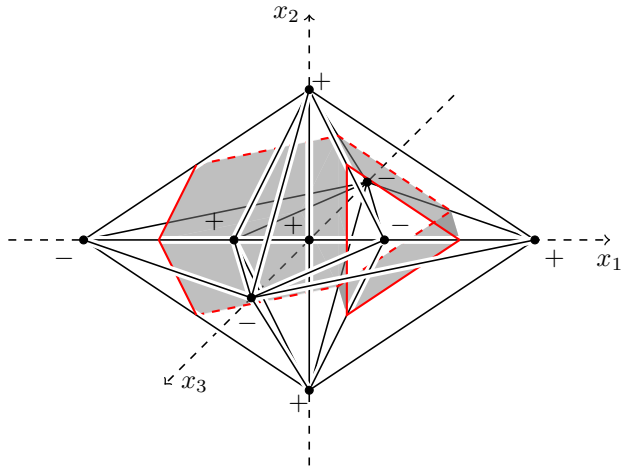
\begin{figure}
\begin{tikzpicture} 

\draw[->,dashed,semithick] (-4,0,0) -- (4,0,0); 
\draw[->,dashed,semithick] (0,-3,0) -- (0,3,0); 
\draw[->,dashed,semithick] (0,0,-5) -- (0,0,5); 

\draw (4,-0.3,0) node {$x_1$}; 
\draw (-0.3,3,0) node {$x_2$}; 
\draw (0.4,0,5) node {$x_3$}; 

\draw[semithick] (0,0,-2) -- (3,0,0); 
\draw[semithick] (0,0,-2) -- (-3,0,0); 
\draw[semithick] (0,0,-2) -- (0,2,0); 
\draw[semithick] (0,0,-2) -- (0,-2,0); 

\draw[semithick] (0,0,-2) -- (1,0,0); 
\draw[semithick] (0,0,-2) -- (-1,0,0);

\fill[gray,opacity=0.5] (2,0,0) -- (1.5,0,-1) --(0,1,-1) -- (0.5,1,0); 
\fill[gray,opacity=0.5] (0.5,0,0) -- (0,0,-1) -- (0,1,-1) --(0.5,1,0); 

\fill[gray,opacity=0.5] (2,0,0) -- (1.5,0,-1) --(0,-1,-1) -- (0.5,-1,0); 
\fill[gray,opacity=0.5] (0.5,0,0) -- (0,0,-1) -- (0,-1,-1) --(0.5,-1,0); 

\fill[gray,opacity=0.5] (-1.5,1,0) -- (0,1,-1) -- (-1.5,1,0);
\fill[gray,opacity=0.5] (0,1,-1) -- (-0.5,0,-1)-- (0,0,-1); 
\fill[gray,opacity=0.5] (-2,0,0)--(-1.5,1,0) -- (0,1,-1) -- (-0.5,0,-1); 

\fill[gray,opacity=0.5] (-1.5,-1,0) -- (0,-1,-1) -- (-1.5,-1,0);
\fill[gray,opacity=0.5] (0,-1,-1) -- (-0.5,0,-1)-- (0,0,-1); 
\fill[gray,opacity=0.5] (-2,0,0)--(-1.5,-1,0) -- (0,-1,-1) -- (-0.5,0,-1); 

%\draw[red,thin,opacity=0.5] (2,0,0) -- (1.5,0,-1) --(0,1,-1) -- (0.5,1,0); 
%\draw[red,thin,opacity=0.5]  (0.5,0,0) -- (0,0,-1) --(0,1,-1) --(0.5,1,0); 

%\draw[red,thin,opacity=0.5] (2,0,0) -- (1.5,0,-1) --(0,-1,-1) -- (0.5,-1,0); 
%\draw[red,thin,opacity=0.5] (0.5,0,0) -- (0,0,-1) -- (0,-1,-1) --(0.5,-1,0); 

\draw[red,thick,dashed] (-1.5,1,0) -- (0,1,-1) -- (1.5,0,-1);
\draw[red,thick,dashed] (-1.5,-1,0) -- (0,-1,-1) -- (1.5,0,-1);

%\draw[red,thin,opacity=0.5] (0,1,-1) -- (-0.5,0,-1)-- (0,0,-1); 
%\draw[red,thin,opacity=0.5] (-2,0,0)--(-1.5,1,0) -- (0,1,-1) -- (-0.5,0,-1); 

%\draw[red,thin,opacity=0.5] (-1.5,-1,0) -- (0,-1,-1) -- (-1.5,-1,0);
%\draw[red,thin,opacity=0.5] (0,-1,-1) -- (-0.5,0,-1)-- (0,0,-1); 
%\draw[red,thin,opacity=0.5] (-2,0,0)--(-1.5,-1,0) -- (0,-1,-1) -- (-0.5,0,-1); 

\draw[line width=3pt,white] (-3,0,0) -- (3,0,0);
\draw[line width=3pt,white] (0,-2,0) -- (0,2,0);

\draw[line width=3pt,white] (3,0,0) -- (0,2,0);
\draw[line width=3pt,white] (0,2,0) -- (-3,0,0);
\draw[line width=3pt,white] (3,0,0) -- (0,-2,0);
\draw[line width=3pt,white] (0,-2,0) -- (-3,0,0);

\draw[line width=3pt,white] (1,0,0) -- (0,2,0);
\draw[line width=3pt,white] (0,2,0) -- (-1,0,0);
\draw[line width=3pt,white] (1,0,0) -- (0,-2,0);
\draw[line width=3pt,white] (0,-2,0) -- (-1,0,0);

\draw[semithick] (-3,0,0) -- (3,0,0);
\draw[semithick] (0,-2,0) -- (0,2,0);

\draw[semithick] (3,0,0) -- (0,2,0);
\draw[semithick] (0,2,0) -- (-3,0,0);
\draw[semithick] (3,0,0) -- (0,-2,0);
\draw[semithick] (0,-2,0) -- (-3,0,0);

\draw[semithick] (1,0,0) -- (0,2,0);
\draw[semithick] (0,2,0) -- (-1,0,0);
\draw[semithick] (1,0,0) -- (0,-2,0);
\draw[semithick] (0,-2,0) -- (-1,0,0);

\draw[red,thick] (-1.5,1,0) -- (-1.5,1,0) --(-2,0,0) -- (-1.5,-1,0) --(-1.5,-1,0);
\draw[red,thick] (2,0,0) -- (0.5,1,0) -- (0.5,0,0) -- (0.5,-1,0) -- (2,0,0)     ;

\draw[line width=3pt,white] (0,0,2) -- (1,0,0); 
\draw[line width=3pt,white] (0,0,2) -- (-1,0,0); 
\draw[line width=3pt,white] (0,0,2) -- (0,-2,0); 
\draw[line width=3pt,white] (0,0,2) -- (0,2,0); 
\draw[line width=3pt,white] (0,0,2) -- (3,0,0); 
\draw[line width=3pt,white] (0,0,2) -- (-3,0,0); 
\draw[semithick] (0,0,2) -- (-1,0,0); 
\draw[semithick] (0,0,2) -- (1,0,0); 
\draw[semithick] (0,0,2) -- (0,2,0); 
\draw[semithick] (0,0,2) -- (0,-2,0); 
\draw[semithick] (0,0,2) -- (3,0,0); 
\draw[semithick] (0,0,2) -- (-3,0,0); 

\draw[fill=black] (0,0,0) circle (0.15em) 
	    node[above left,xshift=0.05cm,yshift=-0.05cm] {$+$}; 
\draw[fill=black] (1,0,0) circle (0.15em) 
	    node[above right,xshift=-0.05cm,yshift=-0.05cm] {$-$}; 
\draw[fill=black] (3,0,0) circle (0.15em) 
	    node[below right] {$+$}; 

\draw[fill=black] (0,0,2) circle (0.15em) 
	    node[below,xshift=-0.05cm,yshift=-0.1cm] {$-$}; 
\draw[fill=black] (0,0,-2) circle (0.15em) 
	    node[right,yshift=0.05cm] {$-$}; 

\draw[fill=black] (0,2,0) circle (0.15em) 
	    node[above right,xshift=-0.1cm,yshift=-0.13cm] {$+$}; 
\draw[fill=black] (0,-2,0) circle (0.15em) 
	    node[below left,xshift=0.12cm,yshift=0.07cm] {$+$}; 

\draw[fill=black] (-1,0,0) circle (0.15em) 
	    node[above left] {$+$}; 
\draw[fill=black] (-3,0,0) circle (0.15em) 
	    node[below left] {$-$}; 
\end{tikzpicture} 
\caption{Topology of $P_t^-(x)=0$ near $c$  with $k=2$ or $\lambda_2 > 0$ and $\lambda_3 < 0$. Only the
  rear faces are shown, rear border dashed.}\label{virothree}
\end{figure}

We show that the topology of the hyper-surface $P_t^-(x) = 0$ in the
polyhedra $A$ and $B$ and their counterparts in all orthants is a disc
with a handle.
The two polyhedra $A$ and $B$ being simplices, the topology is given by
the sign at the vertices.

First, we see that the polynomial admits three roots 
$\phi^- < 0 \leq  \phi^0 < \phi^+$ on the $x_1$ axes.

In the dimension $x_1, \dots, x_m$, the monomial $x_1$ is negative surrounded
by positive monomials in polyhedra $A$ and $B$. This gives us a component $S_A$ homeomorphic to a sphere  of dimension
$m-1$, 
inside the hypersurface $P_t^-(x) = 0$ and containing $\phi^0$ and $\phi^+$. Moreover, we can also
find a topological sphere $S'_A$ (by inflating $S_A$ a little) that does not
meet the hyper-surface $P_t^-(x) = 0$ and that
contains a point on the $x_1$ axes between $\phi^-$ and $\phi^0$.

Similarly, In the dimension $x_1, x_{m+1}, \dots, x_{n-1}$, the 
monomials $x_1$ and $1$ ($-x_1$ alone if $\alpha = 0$)
are positive and surrounded by negative vertices.

This gives us a component $S_B$ homeomorphic to a sphere  of dimension
$n-m-1$,
inside the hypersurface $P_t^-(x) = 0$ and containing $\phi^-$ and $\phi^0$. Moreover, we can also
find a topological sphere $S'_B$ (by inflating $S_B$ a little) that does not meet the same
hyper-surface and that
contains a point on the $x_1$ axes between $\phi^0$ and $\phi^+$.

Now, the sum of the dimensions of the spheres $S_A$ and $S_B$ is $m - 1 + n-m-1 = n-2$
which is one less than the dimension of the ambient space $\mathbb
R^{n-1}$. This means we can compute the linking number of $S_A$ and
$S'_B$ (resp. $S_B$ and $S'_A$). It may be computed as the
intersection of $S'_B$ and $D_A$, the disc inside $S_A$ in the $m$
first dimensions. 

This intersection number is $1$ because the intersection is $\{\phi^0\}$  and this indicates that 
the spheres $S_A$ (resp. $S_B$) is not homotope to $0$ (i.e. non
contractile) in $S^{n-1}
\setminus S'_B$ (resp. $S^{n-1}
\setminus S'_A)$ hence not homotope to $0$ in the zero locus of $P_t^-(0)$. Therefore,
the presence of $S_A$ and $S_B$ ensures that we have at least an
hyper-surface, inside the polyhedra $A$ and $B$, with the two Betti numbers $b_{m-1}$ and $b_{n-m-1}$
which are positive (if $m = 1$ or $m = n-1$, $b_0 > 1$). This can not be just a disc.

This means that $P_t^+(x) = 0$ is a desingularisation of $Q(x)= 0$ that
creates a disc while $P_t^-(x) = 0$ creates a disc with at least one
handle.
But, $P_t^+(x) = 0$ gives us the topology of $P(x)=0$.
This establishes the equation $P(x) = 0$ does not define a locally
extremal hyper-surface.
 
The last part of the theorem is easier: a quasi-double point $c$ for $P$ 
such that all eigen values of $\mathcal H^T P(c)$ (except $c$ itself)
satisfies $\lambda P(c) > 0$ would mean that $c$ is a local minimum of
$|P(c)|$ and therefore, $c$ is an isolated point
 of the hyper-surface $Q(x) = 0$ where $Q \in \Delta$ is the contact
 polynomial $Q(x) = P(x) - P(c)\langle x | c\rangle^d$ for $P$. This allows to add a
 new connected component to the variety of equation $P(x) = 0$. This
 is also impossible in the case of a locally extremal algebraic hyper-surface.
\end{proof}

\begin{corollary}\label{bombeqdistbis}
Let $P \in \mathbb E$ be an homogeneous polynomial of degree $d$ with
$n$ variables. Assume that the zero level of $P$ is
locally extremal. Then, the distance to the discriminant is the
minimal absolute critical value of $P$ i.e.

$$
\dist(P,\Delta) = \min_{\nabla^T P(x) = 0} |P(x)|
$$

Remark: this implies that the right member of the above equality is
continuous in the coefficient of $P$ which is not true in general.
\end{corollary}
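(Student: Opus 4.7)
The plan is to combine the inequality already established in Theorem~\ref{bombineqdist} with the non-existence of quasi-cusp points guaranteed by Theorem~\ref{extremal}. Theorem~\ref{bombineqdist} gives
$$\dist(P,\Delta) \leq \min_{\nabla^T P(x) = 0}|P(x)|,$$
so only the reverse inequality remains to be shown. If the zero locus of $P$ is singular then $P \in \Delta$, both sides vanish at a singular point, and the equality is immediate; so I would assume henceforth that the zero locus is smooth, so that Theorem~\ref{extremal} applies.

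By Theorem~\ref{bombdist}, the continuous function $\delta_P(x) = P(x)^2 + \frac{\|\nabla^T P(x)\|^2}{d}$ attains its minimum on the compact sphere $\mathcal S^{n-1}$ at some point $c$, and by Definition~\ref{maindef} this point $c$ is a quasi-singular point of $P$. Such a point is either a quasi-double point, for which $\nabla^T P(c) = 0$, or a quasi-cusp point, for which $\nabla^T P(c) \neq 0$ and $\mathcal H^T P(c)\nabla^T P(c) = 0$. The first item of Theorem~\ref{extremal} forbids quasi-cusp points in the locally extremal case, so $c$ must be a quasi-double point.

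At a quasi-double point the tangential gradient vanishes, so $\delta_P(c) = P(c)^2$, and consequently
$$\dist(P,\Delta) = \sqrt{\delta_P(c)} = |P(c)| \geq \min_{\nabla^T P(x) = 0}|P(x)|,$$
which, combined with the inequality of Theorem~\ref{bombineqdist}, yields the desired equality. The main topological obstacle has already been discharged upstream in Theorem~\ref{extremal}; the role of the corollary is merely to package the ``no quasi-cusp'' conclusion into a clean algebraic identity. The continuity remark is then a free observation: $\dist(P,\Delta)$ depends continuously on the coefficients of $P$, whereas $\min_{\nabla^T P(x)=0}|P(x)|$ in general does not, because critical points can coalesce or cross the zero level; local extremality is therefore a genuinely rigidifying condition on $P$.
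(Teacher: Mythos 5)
Your argument is correct and is essentially the paper's own proof: the paper likewise derives the corollary immediately from the first item of Theorem~\ref{extremal}, Definition~\ref{maindef}, and equation~\ref{hesseq}, i.e.\ the minimizer of $\delta_P$ is a quasi-singular point which, quasi-cusps being excluded, must be a quasi-double point where $\delta_P(c)=P(c)^2$. Your extra remark disposing of the case $P\in\Delta$ is a harmless (and slightly more careful) addition.
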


\begin{proof}
Immediate from the first item of the previous theorem, the
definition \ref{maindef} and the equation \ref{hesseq} that precedes it.
\end{proof}

%<!-- Local IspellDict: british -->
%<!-- Local IspellPersDict: ~/.ispell-british -->

\section{Further from the discriminant}\label{further}

We now establish a property verified by polynomials that
maximise the distance to the discriminant: 
\begin{theorem}\label{comblin}
Let $P$ be an homogenous polynomial in $n$ variables, of
degree $d$, Bombieri norm $1$ and such that
$\dist(P,\Delta)$ is locally maximal among polynomials of Bombieri
norm $1$. 

Let  $\{c_1,-c_1,\dots,c_k,-c_k\}$ 
be the set of the quasi-singular points of $P$.
Let $R_1,\dots,R_k$ be the corresponding contact radius (we choose one
for each pair $(c,-c)$ because the contact radius corresponding to $c$ and $-c$
are equal or opposite, depending upon the degree).

Then, $P$ is a linear combination of the $R_i$.
\end{theorem}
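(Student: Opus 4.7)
The plan is to perform a first-order variational analysis at $P$. For every $c\in\mathcal S^{n-1}$ and every $Q\in\mathbb E$, define
\[
R_c(Q)(x) := -Q(c)\,\langle x\mid c\rangle^d-\langle x\mid \nabla^T Q(c)\rangle\,\langle x\mid c\rangle^{d-1};
\]
the map $Q\mapsto R_c(Q)$ is linear in $Q$. The proof of theorem \ref{bombdist}, applied verbatim to $Q$ instead of $P$, shows that $-R_c(Q)$ is the orthogonal projection of $Q$ onto $\Delta_c^\perp$. In particular $\|R_c(Q)\|^2=\dist^2(Q,\Delta_c)$ and, crucially,
\[
\langle Q\mid R_c(Q)\rangle=-\|R_c(Q)\|^2.
\]
Polarising this quadratic identity and differentiating gives, for $R_i:=R_{c_i}(P)$ and any direction $V\in\mathbb E$,
\[
\left.\frac{d}{dt}\right|_{t=0}\!\|R_{c_i}(P+tV)\|^2\;=\;-2\,\langle V\mid R_i\rangle.
\]

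Now $\dist^2(\cdot,\Delta)=\min_{c\in\mathcal S^{n-1}}\|R_c(\cdot)\|^2$ is the minimum of a smooth family indexed by the compact set $\mathcal S^{n-1}$; at $Q=P$ this minimum is attained exactly on the finite set of quasi-singular points $\{\pm c_i\}_{i=1}^{k}$ (note $R_{-c}=R_c$ by direct inspection of the formula). Danskin's theorem therefore yields the one-sided directional derivative
\[
\big(\dist^2(\cdot,\Delta)\big)'(P;V)\;=\;\min_{1\le i\le k}\bigl(-2\,\langle V\mid R_i\rangle\bigr).
\]
Local maximality of $\dist(P,\Delta)$ on the unit sphere forces this derivative to be non-positive for every direction $V$ tangent to the sphere at $P$, i.e.\ every $V$ with $\langle V\mid P\rangle=0$: for any such $V$ there must exist an index $i$ with $\langle V\mid R_i\rangle\ge 0$.

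To finish, suppose for contradiction that $P\notin W:=\mathrm{span}(R_1,\dots,R_k)$, and let $P^\perp\in W^\perp\setminus\{0\}$ be its $W^\perp$-component, so $\langle P^\perp\mid P\rangle=\|P^\perp\|^2$ and $\langle P^\perp\mid R_i\rangle=0$ for every $i$. Set
\[
V:=\|P^\perp\|^2\,P-P^\perp.
\]
Then $\langle V\mid P\rangle=\|P^\perp\|^2-\|P^\perp\|^2=0$, so $V$ is tangent to the unit sphere at $P$. Using the key identity $\langle P\mid R_i\rangle=-\|R_i\|^2=-\dist^2(P,\Delta)$, we get
\[
\langle V\mid R_i\rangle\;=\;\|P^\perp\|^2\,\langle P\mid R_i\rangle-\langle P^\perp\mid R_i\rangle\;=\;-\|P^\perp\|^2\,\dist^2(P,\Delta)\;<\;0
\]
for every $i$, contradicting the conclusion of the previous paragraph. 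Hence $P\in W$, which is exactly the statement of the theorem.

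The most delicate ingredient is the envelope calculation: one needs to know that the minimum of $c\mapsto\|R_c(P)\|^2$ on $\mathcal S^{n-1}$ is attained only on the (finitely many) quasi-singular points, so Danskin's theorem applies with a finite index set. Once this point is granted, everything else is a short computation with the Bombieri scalar product and the projection characterisation of the contact radius.
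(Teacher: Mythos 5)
Your proof is correct and reaches the same contradiction as the paper, but by a genuinely different route. The paper argues by hand: it takes $D=P_1-P$ (the negative of the $W^\perp$-component of $P$), moves to $P_t=P+tD$, and estimates $\dist^2(P_t,\Delta)$ from below via a triangle inequality involving the sphere $\mathcal S_P$ of radius $\dist(P,\Delta)$ and a height $h_t\to0$, then normalises and reads off a strictly positive first-order gain. You instead package the envelope computation in Danskin's theorem: observing that $-R_c$ is the orthogonal projection onto $\Delta_c^\perp$, you obtain the clean identities $\langle Q\mid R_c(Q)\rangle=-\|R_c(Q)\|^2$ and $\frac{d}{dt}\big|_{t=0}\|R_c(P+tV)\|^2=-2\langle V\mid R_i\rangle$, and hence the one-sided directional derivative $\min_i(-2\langle V\mid R_i\rangle)$; first-order optimality on the Bombieri sphere then forbids the tangent direction $V=\|P^\perp\|^2 P-P^\perp$, which gives $\langle V\mid R_i\rangle=-\|P^\perp\|^2\dist^2(P,\Delta)<0$ for all $i$ via $\langle P\mid R_i\rangle=-\dist^2(P,\Delta)$. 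Note that after projecting to the tangent space your $V$ coincides with the paper's direction $D$, so the two proofs are computing the same first-order quantity ($2\|P^\perp\|^2\dist^2(P,\Delta)$); what differs is the machinery. Your version is shorter and makes the algebraic structure (self-adjointness of $R_c$, $R_{-c}=R_c$) explicit, at the cost of invoking Danskin; the paper's is more elementary and self-contained, at the cost of a slightly hand-wavy geometric estimate (its ``$=$'' in the expression for $\dist^2(P_t,Q^s_t)$ is really a ``$\ge$''). Both rely, as you correctly flag, on the minimiser set being finite, which the theorem statement builds into the notation $\{c_1,-c_1,\dots,c_k,-c_k\}$.
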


\begin{proof}
We consider an homogeneous polynomial $P$ satisfying the condition of
the theorem, its quasi-critical  points $\{c_1,-c_1,\dots,c_k,-c_k\}$ and
$R_1,\dots,R_k$ the corresponding contact radius (which means
that the contact polynomial $Q_i = P_i + R_i$ has a singularity in
$c_i$ and $-c_i$ for $1 \leq i \leq k$).

By absurd, let us assume that $P$ is not a linear combination of 
$R_1,\dots,R_k$. Let $P_1$ be the orthogonal projection of $P$ on
the vector space generated by $R_1,\dots,R_k$ and let $D = P_1 - P
\neq 0$.
We also consider the sphere $\mathcal S_P$, centered at $P$ of radius  
$\dist(P,\Delta)$. This is schematically represented in figure \ref{figcomblin}.

\begin{figure}
\begin{tikzpicture}[semithick] 

\draw(0,0) ++ (-12:4) arc (-12:192:4);
\draw[->](0,0) -- (0:4) node [pos=0.0,below] {$P$} node [pos=0.5,below] {$R_1$};
\draw[->](0,0) -- (180:4) node [pos=0.5,below] {$R_2$};
\draw[->](0,0) -- (90:5.5) node [pos=0.5,left] {$D$} node [pos=1,left] {$P_1$};

\draw plot [smooth] coordinates {(-12:4.2) (4,0) (15:4.1) (30:4.2)
  (45:4.2) (60:4.2) (75:4.25) (90:4.3) (105:4.25) (120:4.2) (135:4.2)
  (150:4.2) (165:4.1) (180:4.0) (192:4.2)};

\draw[->](0,0.5) node [left] {$P_t$} -- (30:4.0) node [left,xshift=-0.08cm,yshift=0.1cm]
     {$Q^s_t$} --  (30:4.2) node
     [right] {$Q_t$};

\draw(0,0.5) --  (30:4.0) -- (3.4641,0.5) -- (3.4641,0.0);
\draw(3.4641,0.5) -- (0,0.5);

\draw (3.7,0.9) node {$h_t$}; 

\draw (3.6,-0.75) node {$S_P$}; 
\draw (4.4,-0.75) node {$\Delta$}; 

\end{tikzpicture}
\caption{Figure for the proof of theorem \ref{comblin}}\label{figcomblin}
\end{figure}
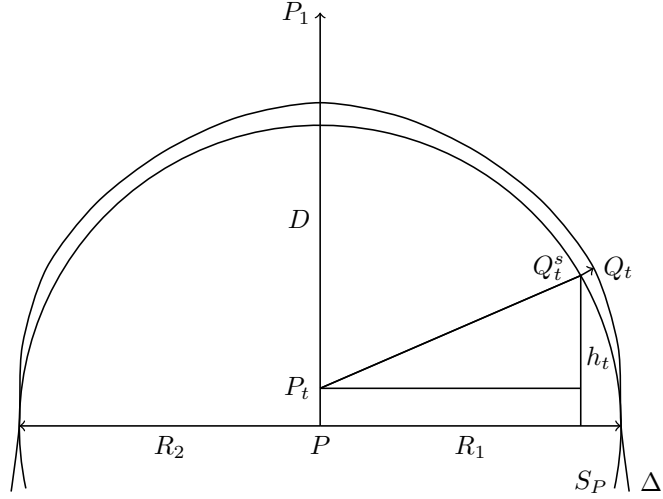

Next, we define $P_t = P + t D$. Let us choose a contact
polynomial $Q_t$ for $P_t$ which means that $\dist(P_t,\Delta) = 
\dist(P_t,Q_t)$. We consider $Q^s_t$, the intersection of 
the segment $[P_t, Q_t]$ with the sphere $\mathcal S_P$.
Let $h_t$ be the distance from $Q^s_t$ to the affine sub-space
containing $P$ and directed 
by $R_1,\dots,R_k$. We know that $\lim_{t \rightarrow 0} h_t = 0$
because $Q_t$ converges to the set  $\{P+R_1,\dots,P+R_k\}$.

We have~:
\begin{align*}
\dist^2(P_t,\Delta) &= \dist^2(P_t,Q_t) \\
&\geq \dist^2(P_t,Q^s_t) \\
&= (h_t - t \|D\|)^2 + \dist^2(P,\Delta) - h_t^2 \\
&= \dist^2(P,\Delta) -2 t h_t \|D\| + t^2\|D\|^2
\end{align*}

But, $P_t$ is not of norm $1$: $\|P_t\|^2 = \|P_1\|^2 + (1 -
t)^2\|D\|^2 = 1 - 2t\|D\|^2 + t^2\|D\|^2$ because $\|P_1\|^2 + \|D\|^2
= \|P\|^2 = 1$. Thus, we consider the
polynomial $\hat P_t = \frac{P_t}{\|P_t\|}$ and we have:

\begin{align*}
\dist^2(\hat P_t,\Delta) &= \frac{\dist^2(P_t,\Delta)}{1 - 2t\|D\|^2 + t^2\|D\|^2}
 \\
&\geq \frac{\dist^2(P,\Delta) -2 t h_t \|D\| + t^2\|D\|^2}{1 - 2t\|D\|^2 + t^2\|D\|^2}\\
&= (\dist^2(P,\Delta) -2 t h_t \|D\| + t^2\|D\|^2)(1 + 2t\|D\|^2 + o(t))\\
&= \dist^2(P,\Delta)(1 + 2t\|D\|^2) + o(t) \textrm{ because $t h_t \in o(t)$}
\end{align*}

This proves that $\dist^2(\hat P_t,\Delta) > \dist(P,\Delta)$ when $t$
is positive and small enough contradicting the fact that $P$ is a
local maxima for the distance to $\Delta$.
\end{proof}

Remarque: This gives a descent direction for an algorithm to compute 
local maxima for the distance to $\Delta$ that we use in the
experiments related in section \ref{experiments}.

%%  However, our experiments **)
%% show that this is not a  **)
%% very good direction. We got much better results using the direction $T$ **)
%% given by $\langle P | T\rangle = 0$ **)
%% and for $1 \leq i \leq k$,  $\langle R_i | T \rangle < 0$. This **)
%% descent direction is orthogonal to $\|D\|$ and is much better in **)
%% practice, despite the fact that it produces a vector of norm greater **)
%% than one that needs normalisation. However, from the fact that $P$ is a local maxima, we only **)
%% get, using $T$, that $P,R_1,\dots,R_k$ are linearly dependant which is weaker than **)
%% the previous theorem.  **)

%% This means that the better descent direction to do proofs is not the **)
%% best one to do local optimisation. **)

\begin{corollary}\label{comblinbis}
Let $P$ be an homogeneous polynomial in $n$ variables, of
degree $d$, Bombieri norm $1$ and such that
$\dist(P,\Delta)$ is locally maximal among polynomials of Bombieri
norm $1$. Assume also that $P=0$ defines a locally extremal hyper-surface.

Let $\{c_1,-c_1,\dots,c_k,-c_k\}$
be the set of quasi-double points of $P$ (which are the
critical points of $P$ on $\mathcal S^{n-1}$ corresponding to the
smallest critical value in absolute value).

Then, we can find $\lambda_1, \dots, \lambda_k$ such that:
$$
P(x) = \sum_{i=1}^k \lambda_i \langle x | c_i \rangle^d
$$
\end{corollary}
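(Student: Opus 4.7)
The plan is to combine the three ingredients already established: Theorem \ref{extremal} (first item) to eliminate quasi-cusp points, Theorem \ref{quasidouble} to describe the contact radii at quasi-double points, and Theorem \ref{comblin} to express $P$ as a linear combination of those contact radii. The main substance of the corollary lies in the theorems it relies on; the corollary itself is essentially an assembly.

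First, I would verify that under the hypothesis of the corollary, the set of quasi-singular points of $P$ coincides with the given set of quasi-double points $\{c_1,-c_1,\dots,c_k,-c_k\}$. Indeed, Theorem \ref{extremal} tells us that when $P=0$ is locally extremal, $P$ admits no quasi-cusp point; hence by definition \ref{maindef} every quasi-singular point is a quasi-double point, i.e.\ a critical point of $P$ on $\mathcal S^{n-1}$. Moreover Corollary \ref{bombeqdistbis} ensures that $\dist(P,\Delta)$ equals the smallest critical value of $|P|$, so the quasi-double points are exactly the critical points of $P$ on $\mathcal S^{n-1}$ attaining this smallest absolute critical value, which matches the description in the statement.

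Next, I would invoke Theorem \ref{quasidouble}: for each quasi-double point $c_i$, the contact radius at $c_i$ is given by the explicit formula
\[
R_i(x) = -P(c_i)\,\langle x \mid c_i\rangle^{d}.
\]
Note that $P(c_i)\neq 0$ (assuming $\dist(P,\Delta)>0$, which holds as soon as $P\notin\Delta$; otherwise the statement is trivial). Thus each $R_i$ is a nonzero scalar multiple of $\langle x\mid c_i\rangle^d$.

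Finally, since $P$ is a local maximum of $\dist(\,\cdot\,,\Delta)$ on the unit sphere of $\mathbb{E}$, Theorem \ref{comblin} applies and yields coefficients $\mu_1,\dots,\mu_k$ such that $P=\sum_{i=1}^{k}\mu_i R_i$. Substituting the formula for $R_i$ and setting $\lambda_i = -\mu_i P(c_i)$ gives the desired expression
\[
P(x) = \sum_{i=1}^k \lambda_i \,\langle x \mid c_i\rangle^{d}.
\]
There is no real obstacle here: all the work has been done in the preceding theorems, and the only point to be careful about is the identification of quasi-singular points with quasi-double points via the extremality hypothesis, which is precisely the content of Theorem \ref{extremal}.
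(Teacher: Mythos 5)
Your proof is correct and follows essentially the same route as the paper's: invoke Theorem \ref{extremal} to rule out quasi-cusp points, use the explicit contact-radius formula $R_i(x)=-P(c_i)\langle x\mid c_i\rangle^d$ from Theorem \ref{quasidouble}, and apply Theorem \ref{comblin} to write $P$ as a linear combination of the $R_i$. The only difference is that you spell out more details (the identification via Corollary \ref{bombeqdistbis}, the nonvanishing of $P(c_i)$, and the reparametrisation $\lambda_i=-\mu_i P(c_i)$), which the paper leaves implicit.
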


\begin{proof}
We established that $P$ is a linear combination of $\{R_1,\dots,R_k\}$. 
By theorem \ref{extremal}, $P$ admits no
quasi-cusp point and therefore $R_i(x) = - P(c_i) \langle c_i | x
\rangle^d$ with $|P(c_i)| = \dist(P,\Delta)$.
\end{proof}

%<!-- Local IspellDict: british -->
%<!-- Local IspellPersDict: ~/.ispell-british -->

\section{The univariate case}

In this section, we assume $n = 2$, that is we consider homogeneous
polynomials of degree $d \geq 2$ with 
$2$ variables, which corresponds to univariate inhomogeneous polynomials.

For this section, it is simpler to manipulate trigonometric
polynomials in one variable. Therefore, to an homogeneous polynomial
$T$, we associate the function $\breve T : \RR \rightarrow \RR$
defined as
$\breve T(\theta) = T(u_\theta)$ with $u_\theta = (\cos(\theta), \sin(\theta))$.

It is clear that $T \mapsto \breve T$ is one to one and we can
therefore extend the Bombieri norm and scalar product to univariate
trigonometric polynomials of degree $d$ as $\|\breve T\| = \| T\|$.

Simple calculation shows that $\breve T'(\theta) = \langle
\nabla^T(u_\theta) | u_\theta^\bot \rangle$ and
$\breve T''(\theta) = {}^t u_\theta^\bot \mathcal H^T T(u_\theta) u_\theta^\bot
- d T(u_\theta)$.

We may think that the polynomial of degree $d$ with $2r$ roots
on the unit circle that maximizes
the distance to the discriminant, among polynomials of the same norm 
and number of roots, are likely to be the polynomials with regularly spaced
roots and having only two opposite critical values.

This leads to the polynomials $T$ of degree $d$, satisfying $\breve T(\theta) = \cos(r (\theta
+ \varphi))$. This
gives for $\varphi = 0$:

\begin{align*}
T_{r,d}(x,y) &= (x^2 + y^2)^\frac{d - r}{2} \sum_{k=0}^{\lfloor \frac{r}{2} \rfloor} (-1)^k
\binom{r}{2k} y^{2k} x^{r-2k} \\
&=  \sum_{p=0}^{\lfloor \frac{d}{2} \rfloor}  \left(\sum_{k=\max(0,p+\frac{r-d}{2})}^{\min(p,\frac{r}{2})} (-1)^k\binom{r}{2k}\binom{d-r}{2(p-k)}\right)y^{2p} x^{d - 2p}
\end{align*}

We can give a simple expression for the Bombieri norm of this polynomial when
$r=d$:

\begin{align*}
\|T_{d,d}\|^2 &= \sum_{k=0}^{\lfloor \frac{n}{2} \rfloor} \binom{d}{2k} =
2^{d-1} \\
%\|T_{d-2,d}\|^2 &= \frac{2^{d-2}}{d-1}
\end{align*}

\begin{fact}
$\dist(T_{r,d},\Delta) = \min(1,\frac{r}{\sqrt{d}})$ 
\end{fact}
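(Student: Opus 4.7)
The plan is to apply Theorem \ref{bombdist} specialized to the univariate case $n=2$ and then perform a direct trigonometric minimization. When $n = 2$, the tangent space to $\mathcal{S}^1$ at $u_\theta$ is the line spanned by $u_\theta^\perp$, so $\nabla^T T(u_\theta)$ is a scalar multiple of $u_\theta^\perp$. Combining this with the identity $\breve T'(\theta) = \langle \nabla^T T(u_\theta) \mid u_\theta^\perp\rangle$ recalled at the beginning of this section yields $\|\nabla^T T(u_\theta)\| = |\breve T'(\theta)|$. Theorem \ref{bombdist} therefore reduces, for any $T \in \mathbb{E}$, to
\[
\dist^2(T, \Delta) \;=\; \min_{\theta \in [0,2\pi]} \Bigl( \breve T(\theta)^2 + \tfrac{1}{d}\,\breve T'(\theta)^2 \Bigr).
\]

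Next I specialize to $T = T_{r,d}$. By construction $\breve T_{r,d}(\theta) = \cos(r\theta)$, hence $\breve T_{r,d}'(\theta) = -r \sin(r\theta)$, and the previous display becomes
\[
\dist^2(T_{r,d}, \Delta) \;=\; \min_{\theta} \Bigl( \cos^2(r\theta) + \tfrac{r^2}{d}\, \sin^2(r\theta) \Bigr)
\;=\; \min_{u \in \mathbb{R}} \Bigl( 1 + \bigl(\tfrac{r^2}{d} - 1\bigr)\sin^2 u \Bigr),
\]
where I set $u = r\theta$ (and note that $u$ sweeps out all of $\mathbb{R}$ modulo $2\pi$ since $r \geq 1$ is an integer, so the extremal values $\sin^2 u = 0$ and $\sin^2 u = 1$ are both attained). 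If $r^2/d \leq 1$ the coefficient $(r^2/d - 1)$ is nonpositive, so the minimum is $r^2/d$, attained at $\sin^2 u = 1$; if $r^2/d \geq 1$ the coefficient is nonnegative, so the minimum is $1$, attained at $\sin u = 0$. In both cases the minimum equals $\min(1, r^2/d)$, and taking the square root gives $\dist(T_{r,d}, \Delta) = \min(1, r/\sqrt{d})$.

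The main obstacle in this argument is really just the univariate reduction of Theorem \ref{bombdist}; once that is in hand the rest is a one-variable minimization of $\cos^2 + (r^2/d)\sin^2$. It is worth noting what the two regimes mean geometrically: when $r \leq \sqrt{d}$ the minimum is realized at the $2r$ critical points of $\breve T_{r,d}$ (the points where $\cos(r\theta) = 0$, i.e.\ between consecutive roots), so the distance is controlled by the tangential gradient term; when $r \geq \sqrt{d}$ the minimum is realized at the $2r$ roots themselves (the points where $\cos(r\theta) = 0$ gives way to $\sin(r\theta) = 0$), so the distance saturates at $1$.
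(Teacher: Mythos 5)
Your proof is correct and takes essentially the same route as the paper: both apply Theorem \ref{bombdist} with the univariate identification $\|\nabla^T T(u_\theta)\| = |\breve T'(\theta)|$ and minimize $\cos^2(r\theta) + \frac{r^2}{d}\sin^2(r\theta)$ (the paper does this by computing $\delta'$, you by rewriting it as $1 + (\frac{r^2}{d}-1)\sin^2(r\theta)$, which is marginally cleaner). One caveat on your closing geometric remark only: you have the two loci swapped --- the points with $\cos(r\theta)=0$ are the \emph{roots} of $T_{r,d}$ (where $\delta = r^2/d$) and those with $\sin(r\theta)=0$ are its \emph{critical points} (where $\delta = 1$), so for $r \leq \sqrt{d}$ the minimum is attained at the roots, not between them; this does not affect the validity of the computation itself.
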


\begin{proof}
By theorem \ref{bombdist}, we have:
$\dist^2(T_{r,d},\Delta) = \min_{\theta \in [0, 2\pi[} \delta(\theta)$
with $\delta(\theta) = \breve T_{r,d}^2(\theta) + \frac{{\breve
    T'^2_{r,d}}(\theta)}{d} = \cos^2(r \theta) + \frac{r^2 \sin^2(r
  \theta)}{d}$.

We have $\delta'(\theta) = 2 r \frac{r^2 - d}{d} \cos(r \theta)\sin(r
\theta)$ hence, $\delta$ has critical values when either $\cos(r
\theta) = 0$ or $\sin(r
\theta) = 0$ which gives the result.
\end{proof}

Thus, if $r < \sqrt{d}$, then $\dist(T_{r,d},\Delta) < 1$ because the
polynomial has quasi-cusp points.
More generally, the proof of proposùition \ref{local} below suggests that we have
$\frac{d - r}{2}$
degrees of liberty to move $T_{r,d}$ away from the discriminant. When 
$r \leq \frac{d}{2}$, we may have enough degree of liberty to increase
the distance by changing the $r$ critical values.

Therefore, we only state the following conjecture:
\begin{conjecture}
Let $P$ an homogeneous polynomial in two variables
with $2r > d$ roots on the unit circle. Let $\alpha =
\dist(P,\Delta)$, we have:
$$
\alpha \leq \frac{\|T_{n,d}\|}{\|P\|}
$$
\end{conjecture}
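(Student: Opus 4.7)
The plan is to identify $T_{r,d}$ as the (essentially unique) maximiser of the scale-invariant ratio $\dist(P,\Delta)/\|P\|$ over polynomials with $2r$ roots on the unit circle. After normalising $\|P\|=1$, the set $\mathcal F_r$ of such polynomials is a closed semi-algebraic subset of the unit sphere of $\mathbb E$, and $\dist(\cdot,\Delta)$ is continuous, so a maximiser $P^\ast$ exists by compactness. The conjectural bound then amounts to $\dist(P^\ast,\Delta)=1/\|T_{r,d}\|$, for which I use the already-established identity $\dist(T_{r,d},\Delta)=1$ valid whenever $r\geq\sqrt d$; this condition is guaranteed by $2r>d$ in all cases (directly for $d\in\{2,3\}$ since $r\geq 2$, and by $r>d/2\geq\sqrt d$ for $d\geq 4$).

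To describe $P^\ast$, I apply Theorem \ref{comblin}: $P^\ast$ is a linear combination of contact radii $R_i$ at its quasi-singular points $c_i$. The first technical step is to rule out quasi-cusps at $P^\ast$. A polynomial of degree $d$ with the maximum number $2r$ of simple roots on $S^1$ (for $2r>d$) defines a zero-dimensional locus that is locally extremal in the paper's sense, so a one-dimensional adaptation of the Viro-surgery argument of Theorem \ref{extremal} should exclude quasi-cusps. An equioscillation argument then forces all $2r$ critical values $|P^\ast(c_i)|$ to coincide with $\dist(P^\ast,\Delta)$, with signs $\epsilon_i:=P^\ast(c_i)/\dist(P^\ast,\Delta)\in\{\pm 1\}$ alternating because the roots of $P^\ast$ separate consecutive critical points. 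Corollary \ref{comblinbis} then yields $P^\ast(x)=\sum_{i=1}^{2r}\lambda_i\langle x\mid c_i\rangle^d$.

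The main remaining task is to force the $c_i$ to be regularly spaced. Writing $c_i=(\cos\theta_i,\sin\theta_i)$ and combining Lemma \ref{scalarlinear} with Corollary \ref{veronese}, the Gram matrix $G_{ij}=\cos^d(\theta_i-\theta_j)$ satisfies $G\lambda=\dist(P^\ast,\Delta)\,\epsilon$ and $\|P^\ast\|^2=\lambda^T G\lambda$, which combine into $\dist(P^\ast,\Delta)^2/\|P^\ast\|^2=1/(\epsilon^T G^{-1}\epsilon)$. Maximising the ratio becomes minimising $\epsilon^T G^{-1}\epsilon$ over $2r$-tuples $(\theta_i,\epsilon_i)$ with alternating $\epsilon_i$. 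Using the $SO(2)$-invariance of the problem and the convexity of $G\mapsto\epsilon^T G^{-1}\epsilon$ on the cone of positive-definite matrices, a symmetrisation over cyclic shifts of the $\theta_i$ (together with the induced sign permutation) should show that the equispaced configuration $\theta_i=(i-1)\pi/r$ is the unique minimiser, identifying $P^\ast$ with a rotation of $T_{r,d}/\|T_{r,d}\|$. The principal obstacle is making this symmetrisation rigorous: the cyclic shift sends $\epsilon_i\mapsto\epsilon_{i+1}=-\epsilon_i$, so the discrete symmetry action on signs must be tracked carefully, and one likely needs to couple convexity with a Fourier-analytic diagonalisation of the circulant $G$ at the equispaced configuration.
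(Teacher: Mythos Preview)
The statement you are trying to prove is presented in the paper as an \emph{open conjecture}: the author writes explicitly that he was ``not able to prove that $T_{r,d}$ is a global maximum of the distance to the discriminant when $d < 2r \leq 2d$'' and only establishes that $T_{d,d}$ is a \emph{local} maximum (Proposition~\ref{local}). There is therefore no proof in the paper to compare your attempt against; the relevant comparison is between your outline and the obstacles the paper identifies.

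Your outline has genuine gaps at precisely the three places you flag as soft, and at least one of them is not just missing detail but a wrong appeal:

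\textbf{Ruling out quasi-cusps.} You invoke Theorem~\ref{extremal}, but its hypothesis is that the zero locus be \emph{locally extremal}. For $n=2$ this means no nearby polynomial of the same degree can have strictly more roots on $S^1$. When $r<d$ that is false: a pair of complex conjugate linear factors can be made real by an arbitrarily small perturbation. So the hypothesis of Theorem~\ref{extremal} fails on the whole range $d/2<r<d$ that the conjecture addresses, and the Viro-type surgery of that proof works \emph{by} increasing a Betti number, which is exactly the move you cannot exclude here. The paper's own examples $T_{1,5}$, $T_{2,6}$ (just outside your range) show that non--locally-extremal polynomials do carry quasi-cusps, so this step needs a genuinely new idea, not an adaptation.

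\textbf{Equioscillation and symmetrisation.} Even granting only quasi-double points, Theorem~\ref{comblin} says $P^\ast$ lies in the span of the contact radii at its quasi-singular points; it does not force all $2r$ critical points between the roots to be quasi-singular, nor does it fix their number. Your reduction to minimising $\epsilon^{T} G^{-1}\epsilon$ over configurations $(\theta_i)$ with alternating $\epsilon_i$ is the right reformulation, but as you note the cyclic shift flips the signs, so you are not averaging copies of $G$ with a fixed $\epsilon$, and convexity of $G\mapsto \epsilon^{T}G^{-1}\epsilon$ alone does not single out the equispaced configuration. This is exactly the difficulty the paper leaves open; the local argument of Proposition~\ref{local} sidesteps it by a direct first-order computation at $T_{d,d}$ using the identity of Proposition~\ref{trigo}, which does not globalise.
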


If this conjecture is true, from the corollary \ref{comblinbis} and
and the fact that $T_{r,d}$ has no quasi-cusp points, we deduce that $\cos(r
\theta)$ should be a linear combination of the family $c_k(\theta) = cos^d(\theta -
\theta_k)$ where $\theta_k = \frac{k\pi}{r}$ are the extrema of
$\cos(r \theta)$ on the upper half of $\mathcal S^1$\!.

This is indeed true and we have the following (new ?) trigonometric
identities, which implies that the conjecture does not hold if $r < \frac{d}{3}$:

\begin{proposition}\label{trigo}
The following identities are true for any positive integer $d$:
\begin{align*}
 \cos(d \theta) &= \frac{2^{d-1}}{d} \sum_{k=0}^{d-1} (-1)^k \cos^d(\theta -
 \frac{k\pi}{d})  \\
 \sin(d \theta) &= \frac{2^{d-1}}{d} \sum_{k=0}^{d-1} (-1)^k \cos^d(\theta-
 \frac{2k+ 1}{2d}\pi)\end{align*}
and if $\frac{d}{3} < r \leq d$ with  $d - r$  even:
\begin{align*}
 \cos(r \theta) &= \frac{2^{d-1}}{r \binom{d}{\frac{d-r}{2}}} \sum_{k=0}^{r-1} (-1)^k \cos^d(\theta -
 \frac{k\pi}{r})  \\
 \sin(r \theta) &= \frac{2^{d-1}}{r \binom{d}{\frac{d-r}{2}}} \sum_{k=0}^{r-1} (-1)^k \cos^d(\theta-
 \frac{2k+ 1}{2r}\pi)  \\
\end{align*}
\end{proposition}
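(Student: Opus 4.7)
The plan is a direct computation using the exponential form of $\cos^d$. First expand
\[
\cos^d(\theta - \alpha) = \frac{1}{2^d} \sum_{j=0}^d \binom{d}{j} \cos\bigl((d-2j)(\theta - \alpha)\bigr),
\]
which comes from $(e^{i(\theta-\alpha)}+e^{-i(\theta-\alpha)})^d/2^d$ via the binomial theorem and taking real parts. Substitute $\alpha = k\pi/r$, weight by $(-1)^k$, and swap sums; the inner sum to evaluate is, for each $m = d - 2j$,
\[
S_m := \sum_{k=0}^{r-1}(-1)^k \cos\bigl(m\theta - mk\pi/r\bigr) = \mathrm{Re}\!\left(e^{im\theta}\sum_{k=0}^{r-1}\omega^k\right), \quad \omega = e^{i\pi(r-m)/r}.
\]

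The hypothesis that $d-r$ is even forces $m \equiv r \pmod{2}$, so $\omega^r = e^{i\pi(r-m)} = 1$ always. The geometric sum $\sum_{k=0}^{r-1}\omega^k$ therefore vanishes whenever $\omega \neq 1$, and equals $r$ when $\omega = 1$. The condition $\omega = 1$ means $r - m \in 2r\,\mathbb{Z}$, i.e.\ $m \in \{\pm r, \pm 3r, \pm 5r, \ldots\}$; the hypothesis $r > d/3$ (equivalently $3r > d$) combined with $|m|\leq d$ leaves only $m = \pm r$. Each of these contributes $r\cos(r\theta)$ to $S_m$, and the associated binomial coefficients $\binom{d}{(d-r)/2}$ and $\binom{d}{(d+r)/2}$ coincide, so collecting yields
\[
\sum_{k=0}^{r-1}(-1)^k \cos^d(\theta - k\pi/r) = \frac{r\binom{d}{(d-r)/2}}{2^{d-1}}\,\cos(r\theta),
\]
which rearranges to the stated identity. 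The special case $r = d$ (where $d - r = 0$ is automatically even and the range condition is trivial) recovers the first formula, since $\binom{d}{0} = 1$.

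For the sine identity, write $\sin(r\theta) = \cos\bigl(r(\theta - \pi/(2r))\bigr)$ and apply the cosine identity to $\theta - \pi/(2r)$; the sample points $k\pi/r$ get shifted to $(2k+1)\pi/(2r)$, yielding the second formula in each pair. The main bookkeeping obstacle is the dual role of the hypotheses: parity ($2\mid d-r$) is precisely what forces $\omega^r = 1$ unconditionally and kills all off-resonant modes, while the range condition $r > d/3$ is precisely what excludes the aliasing terms at $m = \pm 3r$ (and beyond). Without the latter, higher harmonics $\cos(3r\theta), \cos(5r\theta),\dots$ would reappear in the right-hand side and the clean single-frequency identity would fail.
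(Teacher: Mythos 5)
Your proof is correct and follows essentially the same route as the paper's: expand $\cos^d$ via the binomial theorem in exponential form, swap the sums, and observe that the alternating sign turns the inner sum into a geometric series that vanishes except at the resonant frequencies $m=\pm r$, with the parity hypothesis guaranteeing the ratio is an $r$-th root of unity and the condition $r>d/3$ excluding the aliases $m=\pm 3r$. Your write-up is in fact slightly more explicit than the paper's about the separate roles of the two hypotheses, but the argument is the same.
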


\begin{proof}
The first and second identities are particular cases of the third and
fourth when $r = d$.
We use the following reasonning for the third identity:
\begin{align*}
2^d \sum_{k=0}^{r-1} (-1)^k \cos^d(\theta -  \frac{k\pi}{r}) &= 
  \sum_{k=0}^{r-1} (-1)^k \left(e^{\mathrm i (\theta - \frac{k\pi}{r}) }
  + e^{-\mathrm i (\theta - \frac{k\pi}{r}) }\right)^d \\
  &=  \sum_{k=0}^{r-1} (-1)^k \sum_{p=0}^{d} \binom{d}{p} e^{\mathrm i
    p (\theta -  \frac{k\pi}{r})} e^{\mathrm i
    (p - d) (\theta - \frac{k\pi}{r})}\\
  &= \sum_{p=0}^{d}  \binom{d}{p} \sum_{k=0}^{r-1} (-1)^k e^{\mathrm i
    (2 p - d) (\theta - \frac{k\pi}{r})}\\
  &= \sum_{p=0}^{d}  \binom{d}{p} e^{\mathrm i
    (2 p - d)\theta}  \sum_{k=0}^{r-1} (-1)^k e^{\mathrm i
    (d - 2 p) \frac{k\pi}{r}} \\
  &= \sum_{p=0}^{d}  \binom{d}{p} e^{\mathrm i
    (2 p - d)\theta} \sum_{k=0}^{r-1} e^{\mathrm - i
    \frac{d - 2p - r}{r}k\pi} \\
  \hbox{The inner sum is non null only for } &2p \equiv d - r
  \pmod{2r}\hbox{, i.e. } 2 p = d \pm r\hbox{ where it is $r$.} \\
  &= r \binom{d}{\frac{d-r}{2}}  (e^{\mathrm i
    r \theta} + e^{\mathrm - i
    r \theta}) = 2 r
  \binom{d}{\frac{d-r}{2}}  \cos(r \theta) 
\end{align*}
The last identity is a consequence of the third one:
\begin{align*}
\sin(r \theta) = \cos\left(r\theta - \frac{\pi}{2}\right) 
&=  \frac{2^{d-1}}{r \binom{d}{\frac{d-r}{2}}}\sum_{k=0}^{r-1} (-1)^k \cos^d\left(\theta - \frac{\pi}{2r} -
 \frac{k\pi}{r}\right) \\
&=  \frac{2^{d-1}}{r \binom{d}{\frac{d-r}{2}}} \sum_{k=0}^{r-1} (-1)^k \cos^d\left(\theta -
 \frac{2k + 1}{2 r}\pi\right) 
\end{align*}
\end{proof}

The condition that $r$ is not too small is necessary, not only because
of the appearance of quasi-cusp points. 
Let us consider a polynomial of degree $5$ with $2$ roots on the unit
circle. By the previous fact, we know that $\dist^2(P,\Delta) =
\frac{1}{5}$ and that 
$T_{1,5}(x,y)$ only has two quasi-cusp points 
$(0,1)$ and $(0,-1)$ when $\sin(\theta) = 0$.
This means that  If it were a maximum of the distance to 
the discriminant, we would therefore have, using theorem \ref{comblin},
$T_{1,5}(x,y) = K y^4 x$ for some $K \in \RR$, which is not the case
because $T_{1,5}(x,y)= x(x^2+y^2)^2 = x^5 + 2x^3y^2 + xy^4$.
The same computation applies to $T_{2,6}$.

We were not able to prove that $T_{r,d}$ is a global maximum of the
distance to the discriminant when $d < 2r \leq 2d$. We were only able to prove that
$T_{d,d}$ is a local maximum:

\begin{proposition}\label{local}
$T_{d,d}$ is a local maximum of the distance to the discriminant among
  polynomials of the same norm.
\end{proposition}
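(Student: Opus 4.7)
The plan is to perform a first-order variation analysis of $\dist^2(\cdot,\Delta)$ at $P := T_{d,d}$ along directions tangent to the Bombieri norm sphere, handling one special direction corresponding to rotational invariance separately. To set up, using Theorem~\ref{bombdist} with $\breve{P}(\theta)=\cos(d\theta)$ and $\breve{P}'(\theta)=-d\sin(d\theta)$, I get
$$\delta_P(u_\theta) = \cos^2(d\theta) + d\sin^2(d\theta) = 1 + (d-1)\sin^2(d\theta),$$
so $\dist(P,\Delta) = 1$, attained exactly at the $2d$ quasi-double points $c_k := u_{k\pi/d}$ for $k=0,\dots,2d-1$, with $P(c_k) = (-1)^k$. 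A direct computation gives $\breve{\delta}_P''(k\pi/d) = 2d^2(d-1) > 0$, so these are non-degenerate minima of $\delta_P$ on $\mathcal{S}^1$. Proposition~\ref{trigo} furnishes the decomposition $P = \tfrac{2^{d-1}}{d}\sum_{k=0}^{d-1}(-1)^k\langle\cdot\mid c_k\rangle^d$, whence Corollary~\ref{veronese} implies that any $Q$ orthogonal to $P$ satisfies $\sum_{k=0}^{d-1}(-1)^k Q(c_k) = 0$.

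Next, fix a tangent direction $Q\perp P$. By non-degeneracy, the implicit function theorem produces smooth families $c_k(\epsilon)$ of critical points of $\delta_{P+\epsilon Q}$ with $c_k(0) = c_k$. Using both $\nabla^T P(c_k) = 0$ and the critical-point condition $\partial_c\delta_P(c_k) = 0$, the envelope derivative collapses to
$$\frac{d}{d\epsilon}\delta_{P+\epsilon Q}(c_k(\epsilon))\Big|_{\epsilon=0} = 2P(c_k)Q(c_k) = 2(-1)^k Q(c_k).$$
Since $\delta_P(c_k) = 1$ for every $k$ and the next critical value of $\delta_P$ is $d > 1$, continuity ensures that for small $|\epsilon|$ the global minimum of $\delta_{P+\epsilon Q}$ remains among the $c_k(\epsilon)$, giving
$$\dist^2(P+\epsilon Q, \Delta) = 1 + 2\epsilon \min_k\bigl[(-1)^k Q(c_k)\bigr] + O(\epsilon^2) \quad (\epsilon > 0),$$
and the analogous expression with $\max$ for $\epsilon < 0$.

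Finally, I split into two cases. If some $(-1)^k Q(c_k)$ is nonzero, the constraint $\sum_k(-1)^k Q(c_k) = 0$ forces these values to have mixed signs, so $\min_k[(-1)^k Q(c_k)] < 0 < \max_k[(-1)^k Q(c_k)]$; both signs of $\epsilon$ yield strict first-order decrease of $\dist^2$, and after renormalization to the sphere (which only perturbs $\|P+\epsilon Q\|$ by $1+O(\epsilon^2)$ since $Q\perp P$), $\dist$ strictly decreases. Otherwise $Q(c_k) = 0$ for all $k$; the space of homogeneous degree-$d$ polynomials in two variables vanishing at $c_0,\dots,c_{d-1}$ is one-dimensional ($d$ independent linear conditions on a $(d+1)$-dimensional space), spanned by the polynomial $U$ with $\breve{U}(\theta) = \sin(d\theta)$. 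Then
$$\breve{P}(\theta) + \epsilon\breve{U}(\theta) = \cos(d\theta) + \epsilon\sin(d\theta) = \sqrt{1+\epsilon^2}\,\cos\bigl(d(\theta - \alpha(\epsilon))\bigr)$$
is a scaled rotation of $P$, so by the orthogonal-group invariance of both the Bombieri norm and $\Delta$, the ratio $\dist(P+\epsilon U,\Delta)/\|P+\epsilon U\|$ is identically equal to $\dist(P,\Delta)/\|P\|$. In either case $\dist$ on the Bombieri sphere does not exceed its value at $P$, establishing the local maximum. The main technical obstacle is justifying non-degeneracy and the persistence of the global minimum under perturbation, without which the first-order expansion above would not be valid; everything else reduces to linear algebra on $\{Q(c_k)\}_k$ and the rotational symmetry of the distinguished direction $U$.
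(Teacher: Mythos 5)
Your proof is correct in substance, but it takes a genuinely different route from the paper. The paper avoids all of your variational machinery (implicit function theorem, envelope theorem, persistence of the minimizer). Instead it composes with a rotation in $\mathbb{R}^2$ to assume $\langle P\mid S_d\rangle=0$, writes the perturbed polynomial as $\alpha Q+\beta C_d$ with $Q\perp C_d,S_d$, and then simply plugs a single well-chosen point $c_k=u_{k\pi/d}$ into the trivial upper bound
\[
\dist^2(P,\Delta)\ \le\ \dist^2(P,\Delta_{c_k})\ =\ \breve P^2(\tfrac{k\pi}{d})+\tfrac{1}{d}\breve P'^2(\tfrac{k\pi}{d}),
\]
which expands to $\beta^2+2\alpha\beta(-1)^k\breve Q(\tfrac{k\pi}{d})+\alpha^2(\cdots)$ and is $<1$ for $\alpha$ small once $(-1)^k\breve Q(\tfrac{k\pi}{d})<0$ for some $k$. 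In other words, the paper never needs to know where the actual minimizer of $\delta_{P}$ sits; a single evaluation at a fixed quasi-double point of $C_d$ already gives the required strict inequality. Your approach instead tracks the quasi-singular points under perturbation, which buys you a cleaner first-order expansion $\dist^2=1+2\epsilon\min_k[(-1)^kQ(c_k)]+O(\epsilon^2)$ at the cost of justifying nondegeneracy and persistence. Both arguments isolate the same two algebraic facts: (i) $Q\perp C_d$ together with Proposition~\ref{trigo} and Corollary~\ref{veronese} gives $\sum_k(-1)^kQ(c_k)=0$, and (ii) if additionally $Q(c_k)=0$ for all $k$ and $Q\perp S_d$ then $Q=0$, because $\{S_d,U_0,\dots,U_{d-1}\}$ spans $\mathbb{E}$ --- you use this implicitly in your ``$d$ independent linear conditions'' step, which in fact requires Lemma~\ref{determinant}. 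Two small points worth making explicit in your write-up: your case split on all-$Q(c_k)=0$ versus not is exactly what the paper's pre-rotation absorbs, so the two treatments of the rotation direction $S_d$ are equivalent; and both your argument and the paper's need a compactness remark to make the ``for $\epsilon$ (resp.\ $\alpha$) small enough'' threshold uniform over the sphere of admissible directions $Q$, since a pointwise first-order decrease in each direction does not by itself yield a local maximum --- here compactness of $\{Q:\|Q\|=1,\ Q\perp C_d,S_d\}$ and continuity of $Q\mapsto\min_k[(-1)^kQ(c_k)]$ (strictly negative on this set) supply the missing uniform bound.
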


\begin{proof}
The first thing to remark is that the polynomials of degree $d$ such that
$\breve T(\theta) = \cos(d (\theta
+ \varphi))$ for some $\varphi \in \RR$, are generated by the two 
polynomials $C_d$ and $S_d$ verifying  $\breve C_d(\theta) = \cos(d \theta)$ 
and $\breve S_d(\theta) = \sin(d \theta)$ ($C_d = T_{d,d}$). Moreover, 
it is easy to see that $S_d$ and $C_d$ are orthogonal for the Bombieri
scalar product.

If a polynomial $P$ is in the affine space generated by $C_d$ and $S_d$,
then we have $\dist(\frac{P}{\|P\|},\Delta) =
\dist(\frac{C}{\|C\|},\Delta)$ using the invariance of the Bombieri
norm.

Let us now consider a polynomial not in the affine space generated by
$C_d$ and $S_d$ and having the same norm as $S_d$ and $C_d$. Composing
$P$ with a rotation, we can assume that $P =
\alpha Q + \beta C_d$ with 
$\langle Q | C_d \rangle = 0$, $\langle Q | S_d \rangle = 0$,
$\alpha^2 + \beta^2 = 1$, $\alpha,\beta \geq 0$ and $\|Q\| = \|S_d\| = \|C_d\|$.
We also define $U_k(x) = \langle x |
u_{\frac{k\pi}{d}} \rangle^d$ with
$u_{\frac{k\pi}{d}}
= (\cos(\frac{k\pi}{d}),\sin(\frac{k\pi}{d})$). By proposition \ref{trigo},
we have
$$
C_d =  \frac{2^{d-1}}{d} \sum_{k=0}^{d-1} (-1)^k  U_k 
$$

This gives:
\begin{align*}
0 &= \langle Q | C_d \rangle \\
  &=  \frac{2^{d-1}}{d} \sum_{k=0}^{d-1} (-1)^k \langle Q | U_k \rangle \\
   &= \frac{2^{d-1}}{d} \sum_{k=0}^{d-1} (-1)^k Q(u_{\frac{k\pi}{d}}) \hbox{ using
corollary } \ref{veronese}
\end{align*}
We have $Q(u_{\frac{k\pi}{d}}) \neq 0$ for some $k$, otherwise,
$Q$ would be orthogonal to $S_d$ and all $U_k$ which implies $Q = 0$
since $\{S_d,U_1,\dots,U_k\}$ generates all polynomials.
Indeed, the $U_k$ are independant from proposition \ref{determinant} and orthogonal to $S_d$ using
corollary \ref{veronese} that gives $\langle S_d | U_k \rangle =
S_d(u_{\frac{k\pi}{d}}) = \sin(\frac{k\pi}{d}) = 0$.

This implies that there exists $k$ such that   $(-1)^k Q(u_{\frac{k\pi}{d}}) < 0$.
We have $C_d(u_{\frac{k\pi}{d}}) = \cos(\frac{k\pi}{d}) = (-1)^k$
which has an opposite sign to $Q(u_{\frac{k\pi}{d}})$ for such a $k$.  Hence,
using $C_d(u_{\frac{k\pi}{d}}) = (-1)^k$, $\breve C_d' = d \breve S_d$ and $S_d(u_{\frac{k\pi}{d}}) = 0$, 
\begin{align*}
\dist^2(P,\Delta) &\leq \dist^2(P,\Delta_{u_{\frac{k\pi}{d}}}) \\
&=\breve P^2(\textstyle \frac{k\pi}{d}) + \frac{1}{d} \breve
P'^2(\frac{k\pi}{d}) \\
&= \beta^2 + 2 \alpha \beta(-1)^k  \breve Q(\textstyle \frac{k\pi}{d}) + \alpha^2
(\breve Q^2(\frac{k\pi}{d}) + \frac{1}{d}
\breve Q'^2(\frac{k\pi}{d}))
\end{align*}
We may compute directly $\alpha^2 = 1 - \beta^2$ and $\beta^2 =  \frac{\langle P | C_d \rangle^2}{\|P\|^2\|C_d\|^2} + \frac{\langle P | S_d \rangle^2}{\|P\|^2\|S_d\|^2}$.
Hence, from $\alpha\beta(-1)^k  Q(u_k) < 0$ for some $k$, we deduce that if $P$ is near enough to the affine space generated 
 $C_d$ and $S_d$ with the same norm as those, then  $\dist(P,\Delta) < \dist(C_d,\Delta)$.
\end{proof}

%<!-- Local IspellDict: british -->
%<!-- Local IspellPersDict: ~/.ispell-british -->

\section{Critical band of extremal hyper-surfaces}

\begin{corollary}
Let $P \in \mathbb E$ be an homogeneous polynomial of degree $d$ with
$n$ variables. Assume that the zero level of $P$ on the unit sphere is locally
extremal. Let $m = \dist(P,\Delta)$,
$$\hbox{if } \|x\| = 1 \hbox{ and } |P(x)| \leq m \hbox{ then } 
  \|\nabla^T P(x)\|^2 > d (m^2 - P(x)^2)$$
\end{corollary}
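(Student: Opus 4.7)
The corollary is really just a repackaging of Theorem \ref{bombdist} once Theorem \ref{extremal} has ruled out quasi-cusp points, so I would organize the proof in three short steps.

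\textbf{Step 1: the non-strict inequality is automatic.} Theorem \ref{bombdist} gives the distance as the minimum over the sphere of $\sqrt{\delta_P(x)}$, where $\delta_P(x) = P(x)^2 + \|\nabla^T P(x)\|^2/d$. So for every $x \in \mathcal S^{n-1}$ one has
\[
m^2 \;=\; \dist^2(P,\Delta) \;\le\; P(x)^2 + \frac{\|\nabla^T P(x)\|^2}{d},
\]
which after rearrangement is exactly $\|\nabla^T P(x)\|^2 \ge d(m^2 - P(x)^2)$, with no hypothesis on $P(x)$ yet.

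\textbf{Step 2: analyse the equality case.} Suppose $\delta_P(x) = m^2$ for some $x \in \mathcal S^{n-1}$. Then $x$ realises the minimum of $\delta_P$ on the sphere, so in particular $\nabla^T \delta_P(x) = 0$. The computation leading to equation \eqref{hesseq} shows that this forces $\mathcal H^T P(x)\,\nabla^T P(x) = 0$, so by Definition \ref{maindef} the point $x$ is a quasi-singular point for $P$.

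\textbf{Step 3: use local extremality to rule out this case.} By Theorem \ref{extremal}, a polynomial whose zero locus on $\mathcal S^{n-1}$ is smooth and locally extremal admits no quasi-cusp point. Hence the quasi-singular point produced in Step 2 must be a quasi-double point, which by definition satisfies $\nabla^T P(x) = 0$; plugging this back into $\delta_P(x) = m^2$ yields $|P(x)| = m$ (consistent with Corollary \ref{bombeqdistbis}). Thus the only way equality can occur in the inequality of Step 1 is at a critical point of $P$ on the sphere attaining the minimal critical value $m$. Under the hypothesis that $|P(x)|$ lies strictly below this threshold, equality is impossible and the bound becomes strict.

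The only place where care is needed is the boundary case $|P(x)|=m$: at a quasi-double point both sides of the claimed inequality are $0$, so the strict ``$>$'' can only be asserted away from the quasi-double locus (equivalently, the hypothesis $|P(x)| \le m$ should be read as strict inequality, or one must implicitly exclude the critical points realising $m$). Otherwise there is no genuine obstacle: the bulk of the work has already been done in Theorems \ref{bombdist} and \ref{extremal}, and the corollary is essentially a repackaging of their combination.
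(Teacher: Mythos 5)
Your proof is correct and follows the same route as the paper: combine the pointwise lower bound from Theorem \ref{bombdist} with the exclusion of quasi-cusp points from Theorem \ref{extremal}. In fact your treatment of the equality case is more careful than the paper's own proof, which asserts that any $x$ with $\delta_P(x)=m^2$ ``would be a quasi-cusp'' --- overlooking that the minimum of $\delta_P$ on the sphere is always attained, here at quasi-double points, where both sides of the stated inequality vanish and the strict inequality fails; your remark that the hypothesis must be read as $|P(x)|<m$ (or the critical points realising $m$ must be excluded) is exactly right.
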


\begin{proof}
By theorem \ref{bombdist}, we have for all $x$ in the unit sphere:

$$\dist(P,\Delta_x) = P(x)^2 + \frac{1}{d} \|\nabla^T P(x)\|^2 \geq
\dist(P,\Delta) = m$$

If it existed $x \in \mathcal S^{n-1}$ such that  $P(x)^2 + \frac{1}{d} \|\nabla^T P(x)\|^2
= \dist(P,\Delta)$, $x$ would be a
quasi-cusp, by definition, contradicting the first item of the previous theorem.

Thus, for all $x \in  \mathcal S^{n-1}$ we have $P(x)^2 + \frac{1}{d} \|\nabla^T P(x)\|^2 > m$ which
yields the wanted inequality.
\end{proof}

\begin{definition}
Let $P \in \mathbb E$ be an homogeneous polynomial of degree $d$ with
$n$ variables. Let $m = \dist(P,\Delta)$, the critical band of $P$ is
the following set:

$$\mathcal B_c(P) = \{ x \in \mathcal S^{n-1} \hbox{ s.t. } |P(x)| < m \}$$
\end{definition}

\begin{theorem}\label{bandwidth}
Let $P \in \mathbb E$ be an homogeneous polynomial of degree $d$ with
$n$ variables. Assume that the zero level of $P$ on the unit sphere is locally
extremal.

Let $\gamma : [a,b] \rightarrow \mathcal B_c(P)$
be an integral curve of $\nabla^T P$ with $a,b \in ]-m, m[$. Then, we have the following
inequality:
$$
\length(\gamma) < \frac{1}{\sqrt{d}} \left|\arcsin\left(\frac{b}{m}\right) - arcsin\left(\frac{a}{m}\right)\right|
$$
This is bounded by $\frac{\pi}{\sqrt{d}}$ and, if $a$ and $b$ have the
same sign, by half of it.
\end{theorem}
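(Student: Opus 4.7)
The plan is to reparametrize $\gamma$ by the value of $P$ along it and then substitute the pointwise lower bound on $\|\nabla^T P\|$ provided by the preceding corollary into the length integral.

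First I would observe that along any integral curve of $\nabla^T P$ lying on the unit sphere, $P$ is strictly increasing wherever $\nabla^T P \neq 0$: since $\gamma'$ is tangent to $\mathcal S^{n-1}$, one has $\frac{d}{dt}P(\gamma(t)) = \langle \nabla^T P(\gamma(t)) \mid \gamma'(t)\rangle$, positive when $\gamma'$ points along $\nabla^T P$. The preceding corollary gives $\|\nabla^T P\|^2 > d(m^2 - P^2) \geq 0$ on $\mathcal B_c(P)$, so $\nabla^T P$ never vanishes there and the reparametrization by $s = P(\gamma(s))$ is well defined on $[a,b]$. Writing $\gamma'(s) = \lambda(s)\nabla^T P(\gamma(s))$ and differentiating $s = P(\gamma(s))$ gives $\lambda(s)\|\nabla^T P(\gamma(s))\|^2 = 1$, hence $\|\gamma'(s)\| = 1/\|\nabla^T P(\gamma(s))\|$.

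Next, I would express the length as a one-dimensional integral and apply the corollary directly:
$$\length(\gamma) = \int_a^b \frac{ds}{\|\nabla^T P(\gamma(s))\|} < \int_a^b \frac{ds}{\sqrt{d(m^2 - s^2)}} = \frac{1}{\sqrt{d}}\left|\arcsin\!\left(\frac{b}{m}\right) - \arcsin\!\left(\frac{a}{m}\right)\right|,$$
the last equality coming from the substitution $s = m\sin\theta$. The two uniform bounds then follow from the range of $\arcsin$: on $(-1,1)$ it takes values in $(-\pi/2, \pi/2)$, so the difference is strictly less than $\pi$; if moreover $a$ and $b$ have the same sign, both arcsines lie in either $[0,\pi/2)$ or $(-\pi/2,0]$, so the difference is strictly less than $\pi/2$.

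There is no real obstacle: the only delicate point is the legitimacy of the reparametrization, which reduces to the strict positivity of $\|\nabla^T P\|$ on $\mathcal B_c(P)$ guaranteed by the corollary, and the singular behaviour of the integrand near $\pm m$ is harmless because the hypothesis $a,b \in {]-m, m[}$ keeps us away from the boundary, while the improper $\arcsin$-type singularity is integrable in any case.
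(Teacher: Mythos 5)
Your proposal is correct and follows essentially the same route as the paper: reparametrize $\gamma$ by the value of $P$ (justified by the non-vanishing of $\nabla^T P$ on $\mathcal B_c(P)$ from the preceding corollary), deduce $\|\gamma'(y)\| = 1/\|\nabla^T P(\gamma(y))\|$ from colinearity, and integrate the corollary's pointwise bound to obtain the $\arcsin$ expression. The only difference is cosmetic — you spell out the $\lambda(s)$ computation and the final range-of-$\arcsin$ remark slightly more explicitly than the paper does.
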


\begin{proof}
We can consider that $\gamma$ is parametrised by the value of $P$
because  $\nabla^T P$ does not vanish in $\mathcal B_c(P)$ and therefore,
the value of $P$ will be monotonous along the arc. We may also assume
without loss of generality that $a < b$.

This means that we have $P(\gamma(y)) = y$ which by derivation gives

$$
\langle \nabla P(\gamma(y)) | \gamma'(y) \rangle = \langle \nabla^T P(\gamma(y)) | \gamma'(y) \rangle = 1
$$ 

The previous corollary and the fact that $\gamma'(y)$ and $\nabla^T
P(\gamma(y))$ are colinear yields:

$$\|\gamma'(y)\| = \frac{1}{\|\nabla^T P(\gamma(y))\|} <
\frac{1}{\sqrt{d (m^2 - P(\gamma(y))^2)}} = \frac{1}{\sqrt{d (m^2 - y^2)}}$$

Finally, for the length, we have:

\begin{align*}
\hbox{length}(\gamma) &= \int_a^b \|\gamma'(y)\| \hbox{d}y\\
&<
\frac{1}{\sqrt{d}} \int_a^b \frac{\hbox{d}y}{\sqrt{m^2 - y^2}}\\
&< \frac{1}{\sqrt{d}}  \left(\arcsin\left(\frac{b}{m}\right) 
- arcsin\left(\frac{a}{m}\right)\right)
\end{align*}

If $b < a$, reversing the arc gives the wanted result.
\end{proof}

%% \begin{corollary}
%% Let $P \in \mathbb E$ be an homogeneous polynomial of degree $d$ with
%% $n$ variables. Assume that the zero level of $P$ on the unit sphere is locally
%% extremal.

%% Let $m = \dist(P,\Delta)$
%% Let $x$ be a point on the border of the critical band of $P$ (hence $m
%% = |P(x)|$). Then, there exists at least one 
%% integral curve $\gamma : [-m,m] \rightarrow \mathcal B_c(P)$ of
%% $\nabla^T P$, parametrised by the value of $P$ and such that
%% $\gamma(P(c)) = c$. Moreover, any such curve has a length less than $\frac{\pi}{sqrt{d}}$.
%% \end{corollary}

%% \begin{proof}
%% The only things to prove is to treat the case where $c$ is a critical
%% point of $P$. In this case, we know by theorem \ref{extremal}, the
%% hessian matrix of $P$ at $c$ has at least one eigen value such that
%% $\lambda P(x) \leq 0$ ... CA NE SUFFIT PAS, IL FAUT < 0
%% \end{proof}

%<!-- Local IspellDict: british -->
%<!-- Local IspellPersDict: ~/.ispell-british -->

\section{Large components far from the discriminant}

The following proposition will have as consequence a lower bound for the size 
of connected components of an algebraic hyper-surfaces:
\begin{proposition}
Let $P \in \mathbb E$ (i.e. $P$ is an homogeneous
polynomial of degree $d$ with $n$
variables). Let $\dist(P,\Delta) \neq 0$ be the
distance between $P$ and the real discriminant of $\mathbb E$, then 
for any $x \in \mathcal S^{n-1}$ a critical point of $P$, the open spherical cap of $\mathcal
S^{n-1}$ with center
$x$ and radius angle 
$\alpha = \frac{1}{d}\sqrt{\frac{2 \dist(P,\Delta)}{\|P\|}}$ does
  not meet the zero level of $P$.
\end{proposition}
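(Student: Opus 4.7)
The plan is to control $P$ along a geodesic of $\mathcal{S}^{n-1}$ starting at the critical point $x$, using the bounds on the gradient and Hessian of $P$ from lemma \ref{bombineq} together with the inequality $|P(x)|\geq \dist(P,\Delta)$ supplied by theorem \ref{bombineqdist}.

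Concretely, I would fix a unit vector $v\in\mathcal{S}^{n-1}$ orthogonal to $x$ and parametrise the great circle through $x$ in the direction $v$ by
$$
g(\theta)=\cos\theta\,x+\sin\theta\,v,\qquad g'(\theta)=-\sin\theta\,x+\cos\theta\,v,\qquad g''(\theta)=-g(\theta),
$$
so $\|g(\theta)\|=\|g'(\theta)\|=1$ and $\theta$ is arclength on the sphere. Any point of the spherical cap of radius angle $\alpha$ around $x$ is of the form $g(\theta)$ for some $v$ and some $\theta\in[0,\alpha)$, so it suffices to show that $f(\theta):=P(g(\theta))$ does not vanish for $\theta\in[0,\alpha)$.

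Next I would compute the first two derivatives of $f$. Since $\nabla^T P(x)=0$ (because $x$ is a critical point of $P$ on $\mathcal{S}^{n-1}$) and $\langle x\mid v\rangle=0$, one has $\langle\nabla P(x)\mid v\rangle=0$, hence $f'(0)=0$. The second derivative reads
$$
f''(\theta)={}^tg'(\theta)\,\mathcal{H}P(g(\theta))\,g'(\theta)+\langle\nabla P(g(\theta))\mid g''(\theta)\rangle={}^tg'(\theta)\,\mathcal{H}P(g(\theta))\,g'(\theta)-d\,P(g(\theta)),
$$
using Euler's relation $\langle\nabla P(g(\theta))\mid g(\theta)\rangle=d\,P(g(\theta))$. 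Lemma \ref{bombineq} together with $\|g(\theta)\|=1$ then gives
$$
|f''(\theta)|\leq \|\mathcal{H}P(g(\theta))\|_2+d|P(g(\theta))|\leq d(d-1)\|P\|+d\|P\|=d^2\|P\|.
$$

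With $f'(0)=0$, Taylor's formula with integral remainder yields $|f(\theta)-f(0)|\leq\tfrac{1}{2}\theta^2 d^2\|P\|$. If there existed $\theta\in[0,\alpha)$ with $f(\theta)=0$, we would deduce
$$
|P(x)|=|f(0)|\leq \tfrac{1}{2}\theta^2 d^2\|P\|<\tfrac{1}{2}\alpha^2 d^2\|P\|=\dist(P,\Delta),
$$
contradicting theorem \ref{bombineqdist}, which ensures $|P(x)|\geq\dist(P,\Delta)$. Hence no such zero exists and the open cap of radius $\alpha$ about $x$ is free of zeros of $P$. The argument is essentially a one-variable Taylor estimate, so there is no genuine obstacle; the only subtlety is being careful that the normal part of $\nabla P(x)$ does not contribute to $f'(0)$ (which is automatic from $v\perp x$), and that the Hessian bound is applied on the unit sphere where $\|g(\theta)\|=1$.
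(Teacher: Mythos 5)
Your proof is correct and follows essentially the same route as the paper: parametrise a great circle through the critical point, note $f'(0)=0$, bound $|f''|$ by $d^2\|P\|$ via lemma \ref{bombineq}, and conclude with a Taylor estimate and theorem \ref{bombineqdist}. The only cosmetic difference is that you rewrite the $\langle\nabla P\mid g''\rangle$ term via Euler's relation as $-dP(g(\theta))$ before bounding it, while the paper bounds $\|\nabla P\|\cdot\|g''\|$ directly; both yield the same $d\|P\|$ contribution.
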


\begin{proof}
Let us consider $x \in \mathcal S^{n-1}$ a critical point of $P$ and
$y \in \mathcal S^{n-1}$ such that $P(y) = 0$.
Consider $\alpha$ the measure of the angle $x0y$, and consider $v \in
\mathcal S^{n-1}$ such that $v$ orthogonal to $x$ and $y =
\cos(\alpha)x + \sin(\alpha)v$. See figure \ref{figone}.

\begin{figure}
\begin{tikzpicture}[semithick]
\def\radius{3}
\draw (0,0) ++ (-15:\radius) arc (-15:105:\radius);
\node at  (105:\radius) [above right] {$\mathcal S^{n-1}$};
\draw (0,0) -- (\radius,0) node [pos=0.6,above] {$x$};
\draw (0,0) -- (0,\radius) node [pos=0.6,left] {$v$};
\draw (0,0) -- (35:\radius) node [pos=0.6,above left] {$y$};
\draw (0,0) ++ (0:0.2*\radius) arc (0:35:0.2*\radius);

\draw[->] (\radius,0) -- (1.5*\radius,0) node [pos=0.5,above]
     {$\nabla P(x)$};
\draw[->] (0,0) ++ (35:\radius) -- ++ (125:\radius*0.5) node
     [pos=0.5,above right]
     {$\nabla P(y)$};

\node at (17.5:0.3*\radius) {$\alpha$};
\node at (0,0) [left] {$0$};
\end{tikzpicture} 
\caption{}\label{figone}
\end{figure} 

Then, we define:
$$
f(\theta) = P(\cos(\theta)x + \sin(\theta)v)
$$
We have:
$$
\begin{array}{rcl}
  f(0) &=& P(x) \cr
  f(\alpha) &=& P(y) = 0 \cr
  f'(\theta) &=& {^t(-\sin(\theta)x + \cos(\theta)v)}\nabla
  P(\cos(\theta)x + \sin(\theta)v) \cr
  f'(0) &=& {^t v}\nabla P(x) = 0 \hbox{ because $v$ orthogonal to $x$
  and $\nabla P(x)$} \cr
  f''(\theta) &=&  {^t(-\sin(\theta)x + \cos(\theta)v)}\mathcal H
  P(\cos(\theta)x + \sin(\theta)v)(-\sin(\theta)x + \cos(\theta)v) \cr
  &&+\;
  {^t(-\cos(\theta)x - \sin(\theta)v)}\nabla
  P(\cos(\theta)x + \sin(\theta)v)
\end{array}
$$
Using the inequality of lemma \ref{bombineq} and the fact that
$\cos(\theta)x + sin(\theta)v \in \mathcal S^{n-1}$, we have:
$$
\begin{array}{rcl}
|f''(\theta)| &\leq& \|\mathcal H P(\cos(\theta)x +
\sin(\theta)v)\|_2 \, \|-\sin(\theta)x + \cos(\theta)v\|^2 \cr &&+\;
\|\nabla P(\cos(\theta)x + \sin(\theta)v)\| \, \|-\cos(\theta)x -
\sin(\theta)v\| \cr
&=& \|\mathcal H P(\cos(\theta)x +
\sin(\theta)v)\|_2 +
\|\nabla P(\cos(\theta)x + \sin(\theta)v)\| \cr
&\leq& d(d-1)\, \|P\| \,\|\cos(\theta)x +\sin(\theta)v)\|^{d-2} + 
d\, \|P\| \,\|\cos(\theta)x +\sin(\theta)v)\|^{d-1} \cr
&=& d^2 \|P\|
\end{array}
$$
Then, using Taylor-Lagrange equality, we find $\theta \in [0,\alpha]$ such
that
$$
0 = f(\alpha) = f(0) + \alpha f'(0) + \frac{\alpha^2}{2}f''(\theta) = 
P(x)  + \frac{\alpha^2}{2}f''(\theta)
$$

This implies:
$$
|P(x)| \leq  \frac{d^2 \alpha^2}{2} \|P\|
$$

and therefore with theorem \ref{bombineqdist}, we have
$$
\alpha \geq \frac{1}{d}\sqrt{\frac{2 \,\dist(P,\Delta)}{\|P\|}}
$$
\end{proof}

\begin{corollary}\label{sphereinside}
Let $P \in \mathbb E$ and $\dist(P,\Delta) \neq 0$ be the
distance between $P$ and the discriminant for $\mathbb E$.
Each connected component of the complement of the zero level of $P$
in $\mathcal
S^{n-1}$ contains an open spherical cap of $\mathcal
S^{n-1}$ with center
$x$ and radius angle 
$\alpha = \frac{1}{d}\sqrt{\frac{2 \dist(P,\Delta)}{\|P\|}}$
\end{corollary}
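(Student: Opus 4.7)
The plan is straightforward once we combine the preceding proposition with a compactness argument applied component by component.

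Let $C$ be any connected component of $\mathcal{S}^{n-1}\setminus\{P=0\}$. I would first observe that $P$ has constant sign on $C$ (by connectedness and continuity, since $P$ does not vanish there), and that $\bar C$ is compact with $P\equiv 0$ on $\partial C$. Therefore the restriction of $|P|$ to $\bar C$ attains its maximum at some interior point $x_0\in C$. Since $x_0$ is an interior extremum of $P$ restricted to $\mathcal{S}^{n-1}$, we have $\nabla^T P(x_0)=0$, i.e.\ $x_0$ is a critical point of $P$ on the unit sphere.

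Next I would invoke the preceding proposition applied to this critical point $x_0$: the open spherical cap $\mathcal{C}$ centered at $x_0$ of angular radius $\alpha = \frac{1}{d}\sqrt{\frac{2\,\dist(P,\Delta)}{\|P\|}}$ does not meet $\{P=0\}$. Hence $\mathcal{C}\subset \mathcal{S}^{n-1}\setminus\{P=0\}$. Being connected and containing $x_0\in C$, the cap $\mathcal{C}$ must lie entirely inside the connected component $C$. This is exactly the claim.

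The only subtlety to check — and where I would be careful — is the assertion that $\dist(P,\Delta)\neq 0$ guarantees the zero level is smooth enough that its complement is an honest open subset of $\mathcal{S}^{n-1}$ with well-defined connected components. This follows from the definition of the discriminant: if $P\notin\Delta$, then no $x\in\mathcal{S}^{n-1}$ satisfies $P(x)=0$ and $\nabla P(x)=0$ simultaneously, so $\{P=0\}$ is a smooth hypersurface of $\mathcal{S}^{n-1}$ whose complement is open. I expect no real obstacle beyond this; the main content lies in the previous proposition (with its Taylor–Lagrange estimate using lemma \ref{bombineq}), and the corollary is essentially a one-line consequence: every connected component of the complement contains a critical point, and around each critical point one has a cap of guaranteed radius inside the component.
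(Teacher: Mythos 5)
Your argument is correct and is exactly the paper's proof, just written out in more detail: the paper's own justification is the one-line observation that every connected component of the complement contains an extremum of $P$, hence a critical point of $P$ on the sphere, to which the preceding proposition applies. The additional points you verify (the maximum of $|P|$ on $\bar C$ is interior since $P$ vanishes on $\partial C$, and the connected cap containing $x_0$ must lie in $C$) are the implicit steps the paper leaves to the reader.
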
 

\begin{proof}
Immediate because every connected component of the complement of
the zero level of $P$ contains at least one extrema of $P$ which is a
critical point of $P$.
\end{proof}

These two last results can also be used in the projective space
$\mathcal P^{n-1}(\mathbb R)$ with the metric induced by the metric on the
sphere $\mathcal S^{n-1}$ because the radius angle of the spherical
cap
in  $\mathcal S^{n-1}$
is the radius of a disk is  $\mathcal P^{n-1}(\mathbb{R})$. 

\begin{theorem}\label{distancebetween}
Let $P \in \mathbb S$ and $\dist(P,\Delta) \neq 0$ be the
distance between $P$ and the discriminant for $\mathbb E$.
The distance (measured as an arc length) between two distinct connected components 
of the zero level of $P$
in $\mathcal
S^{n-1}$ is greater of equal to 
$\alpha = \frac{2}{d}\sqrt{\frac{2 \dist(P,\Delta)}{\|P\|}}$
\end{theorem}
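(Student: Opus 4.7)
Let $Z_1$ and $Z_2$ denote the two distinct components of the zero level of $P$ on $\mathcal S^{n-1}$, and choose $y_1 \in Z_1$, $y_2 \in Z_2$ realizing the distance $L := d(Z_1, Z_2)$. Write $\gamma \colon [0, L] \to \mathcal S^{n-1}$ for a minimizing unit-speed geodesic from $y_1$ to $y_2$; as a great-circle arc it has the form $\gamma(t) = \cos(t - t^*)\, x + \sin(t - t^*)\, v$ around any chosen base point $x = \gamma(t^*)$ with $v = \gamma'(t^*)$. The plan is to apply the Taylor-Lagrange estimate from the proof of the preceding proposition twice, anchored at an interior maximum of $|P \circ \gamma|$, exploiting each half of the arc.

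The crucial step is to show that the interior maximum $M := \max_{t \in [0, L]} |P(\gamma(t))|$ is at least $\dist(P, \Delta)$. Suppose otherwise: the whole arc $\gamma$ lies in the open \emph{critical band}
\begin{equation*}
B := \{y \in \mathcal S^{n-1} : |P(y)| < \dist(P, \Delta)\}.
\end{equation*}
By theorem \ref{bombineqdist}, $B$ contains no critical point of $P$ on $\mathcal S^{n-1}$, so on $B \setminus \{P = 0\}$ the downward gradient flow of $P^2$ is nonsingular and strictly decreases $|P|$; since $\nabla^T P$ is bounded away from zero near the smooth zero set, every orbit reaches $\{P = 0\}$ in finite time. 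This yields a continuous retraction $B \to \{P = 0\}$, hence a bijection $\pi_0(B) \cong \pi_0(\{P = 0\})$, so $Z_1$ and $Z_2$ lie in different components of $B$. Then $\gamma$ cannot stay inside $B$, a contradiction.

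Now let $x = \gamma(t^*)$ attain the maximum $M = |P(x)| \ge \dist(P, \Delta)$; necessarily $t^* \in (0, L)$ and $(P \circ \gamma)'(t^*) = 0$. With $v = \gamma'(t^*)$ and $f(s) = P(\cos(s)\, x + \sin(s)\, v)$ we get $f(0) = P(x)$, $f'(0) = 0$, and the Bombieri inequalities of lemma \ref{bombineq} (used exactly as in the preceding proposition) give $|f''(s)| \le d^2 \|P\|$. Since $f(-t^*) = P(y_1) = 0$ and $f(L - t^*) = P(y_2) = 0$, Taylor-Lagrange on each half yields
\begin{equation*}
|P(x)| \le \tfrac{t^{*2}}{2}\, d^2 \|P\|
\qquad \text{and} \qquad
|P(x)| \le \tfrac{(L - t^*)^2}{2}\, d^2 \|P\|.
\end{equation*}
Taking square roots and summing, $L = t^* + (L - t^*) \ge \tfrac{2}{d}\sqrt{2|P(x)|/\|P\|} \ge \tfrac{2}{d}\sqrt{2\,\dist(P, \Delta)/\|P\|}$, which is the claimed bound. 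The genuine difficulty lies entirely in the second paragraph: bounding $M$ from below uses the topological observation, coming from theorem \ref{bombineqdist}, that the critical band has no critical points of $P$ and therefore cannot serve as a bridge between different connected components of $\{P = 0\}$.
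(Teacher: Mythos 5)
Your proof follows essentially the same route as the paper: locate the interior maximum $C$ of $|P|$ on a minimizing arc joining the two components, show $|P(C)|\geq \dist(P,\Delta)$, and apply the Taylor--Lagrange estimate with $|f''|\leq d^2\|P\|$ to each half of the arc. The only real difference is how the key lower bound $M\geq\dist(P,\Delta)$ is justified: the paper simply invokes Ehresmann's fibration theorem (no critical values of $P$ in $(-m,m)$, so the band fibers over the interval and its components are in bijection with those of $\{P=0\}$), whereas you build the same conclusion by hand via a gradient-flow retraction of the critical band onto the zero set. Your version is more self-contained, but it has one technical slip: the downward gradient flow of $P^2$ has $\nabla(P^2)=2P\,\nabla P$, which \emph{vanishes} on $\{P=0\}$, so orbits approach the zero set only asymptotically, not ``in finite time'' as you claim. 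This is easily repaired (flow along $-\operatorname{sign}(P)\,\nabla^T P$, or reparametrize by the value of $P$ as in theorem \ref{bandwidth}, using $\|\nabla^T P(x)\|^2\geq d\,(m^2-P(x)^2)$ from theorem \ref{bombdist} to get a finite-length, hence finite-time, crossing of the band), or one can simply cite Ehresmann as the paper does; with that fix the argument is complete and correct.
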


\begin{proof}
Consider an arc $[A,B]$ on  $\mathcal S^{n-1}$ joining two distinct
connected components of the zero level of $P$. By the Ehresmann
theorem \cite{Ehre50}, There is a point $C$  on $[A,B]$ where $P$ reaches a value
greater, in absolute value, than a critical value
of $P$. We can take $C$ a point where $|P(x)|$ is maximum on
$[A,B]$.

Then, by exactly the same computation than for the previous theorem,
using the fact that $\nabla P(C)$ is zero in the direction of the
segment, we find that the arc lengths of $[A,C]$ and $[C,B]$ are
greater or equal to $\frac{1}{d}\sqrt{\frac{2 \dist(P,\Delta)}{\|P\|}}$ which ends
the proof.

\end{proof}

%<!-- Local IspellDict: british -->
%<!-- Local IspellPersDict: ~/.ispell-british -->

\section{Experiments with extremal curves}\label{experiments}

Section \ref{further} suggests an algorithm to numerically optimise the
distance to the discriminant of an hyper-surface: this algorithm 
requires to compute the 
quasi-singular points and solve a linear system to get a direction in
which the distance increases. At each step, we do not need to recompute
the quasi-singular points because we can use Newton's method to move
the previous ones.

We have implemented such an algorithm as part of our GlSurf software. It is not a robust algorithm
and it sometimes encounter numerical problems. Nevertheless, we
managed to use it on all maximal curves up to degree 6 (inclusive) and
some curves of higher degree. We relate those experiments in the table
\ref{tableexperiments} in the hope that they could help to build new conjectures.

Remark: because we deal with curves, we reinforced our theorem taking
into account the nesting of connected components which give more
locally extremal hyper-surfaces that just considering the number of
connected components: there are two sextic curves with nine ovals that
do not have the maximum $b_0 = 11$ but that are locally extremal. 

\begin{table}
\begin{tabular}{|c|c|c|c|c|}
\hline
Degree & Topology & $\dist(P,\Delta)$ & $\|D\|^2$ & $k = k^+ + k^-$ \\
\hline
2 & $O$ & $\frac{1}{\sqrt{3}} \simeq 0.577 $ & $0$ & $\infty = 1 + \infty$ \\
3 & $| \; O$ & $\frac{1}{\sqrt{7}} \simeq 0.378 $ & $0$ & $\infty = 1 + \infty$ \\
4 & $O\times4$ & $7.12 10^{-2}$ & $2.03 10^{-35}$ & 10 = 4 +
6\\
4 & $(O)$ & $\frac{\sqrt{3}}{\sqrt{47}} \simeq 0.253$ & $0$ & $\infty = \infty + \infty$ \\
5 & $| \; O\times6$ & $2.49 10^{-3}$ & $4.27 10^{-19}$ & $15 = 6 +
9$\\
5 & $| \; (O)$ & $\frac{\sqrt{3}}{\sqrt{103}} \simeq 0.171$ & $0$ & $\infty = \infty + \infty$ \\
6 & $(O) \; O\times9$ & $9.61 10^{-5}$ & $1.30 10^{-28}$ & $22 = 9 + 13$  \\
6 & $(O \times 5) \; O\times5$ & $6.79 10^{-11}$ & $\bf 1.64
  10^{-1}$ & $21 = 10 + 11$ \\
6 & $(O \times 9) \; O$ & $5.82 10^{-9}$ & $\bf 5.05 10^{-4}$ & $21 =
11 + 10$ \\
6 & $(O \times 2) \; O \times 6$ & $2.49 10^{-4}$ & $3.14 10^{-13}$ & $21 = 10
+ 11 $ \\
6 & $(O \times 6) \; O \times 2$ & $6.56 10^{-7}$ & $3.25 10^{-15}$ & $20 = 10
+ 10 $ \\
6 & $((O))$ & $\frac{\sqrt{5}}{\sqrt{371}}$ & $0$ & $\infty = \infty + \infty$ \\
7 & $| \; O\times15$ & $2.38 10^{-6}$ & $5.54 10^{-12}$ & $30 = 15 + 15$\\
8 & $(O\times3) O\times 18$ & $3.43 10^{-8}$ & $\bf 1.51 10^{-6}$ & $39 = 18 + 21$ \\
\hline
\end{tabular}
\begin{description}
\item[Degree] the degree of the polynomial (which has Bombieri norm 1)
\item[Topology] $O$ represents an oval, $|$ a projective lines 
$(...)$ an oval containing other curves and we use $\times$ to shorten
  the representation.
\item[$\bf{dist(P,\Delta)}$] the distance to the discriminant at the stage
we stopped the optimisation.
\item[$\bf{ \left\|D\right\| }$] following the notation of the proof of theorem
  \ref{comblin}. It converges toward $0$ and gives a good indication
to know it we are near the local minima. We highlighted in bold 
values which are not small enough to draw definitive conclusion.
We choosed $\|D\|^2 < \dist^2(P,\Delta)$ which ensures a correct
topology for the linear combination given by corollary \ref{comblinbis}.
\item[$\bf{k = k^+ + k^-}$] The number of pairs of quasi-double points together with the
  sign of the polynomials. For odd degree, this make only sense if we
  assume that the sign is taken on same half of
  $\mathcal{S}^{n-1}$, split by the projective line that is always
  present in $P=0$. We count in $k^+$ the critical values which are
  inside ovals not contained in other ovals.
\end{description}
\caption{Experimental results}\label{tableexperiments}
\end{table}

It is important to note that these experiments only produce 
polynomials 
of Bombieri norm $1$ that are likely to be near a local maxima of the
distance to the discriminant. We currently have no way to find
accurately (numerically or theoretically) the global maxima in each
connected component of the complement of the real discriminant.

\paragraph{\bf Degree 2, 3 and maximally nested ovals}

Curves which have the maximum number of nested ovals are a particular case (i.e. $d/2$ nested
ovals if the degree $d$ is even and $(d-1)/2$ nested ovals plus a
projective line otherwise).

It seems from experiments that the polynomial maximising the distance
to the discriminant in these cases is obtained as the revolution of a
polynomial in two variables that has the maximum number of roots
equally spaced on the unit circle. This may be defined as:

\begin{align*}
T_d(x,t) &= \sum_{k=0}^{\lfloor \frac{d}{2} \rfloor} (-1)^k
\binom{d}{2k} t^{k} x^{d-2k} \\
P_d(x,y,z) &= T_d(x,y^2+z^2)
\end{align*}

The polynomials $P_d$ are not of norm $1$, and have infinitely many critical
points with the critical value $\pm 1$. In fact the curve $P_d(1,x,y)
= 0$ is a union of concentric circles centred in $(1,0,0)$ plus one line at
infinity, when $d$ is odd. Therefore, they are not irreducible. The point $(1,0,0)$ is the
only isolated critical point. This is how we filled
the corresponding lines of the table \ref{tableexperiments} with
$\frac{1}{\|P_d\|}$. Experiments seems to indicate that these are
global maxima of the corresponding connected components of $\Delta^c$,
but proving this probably implies finishing to study the univariate case.

\paragraph{\bf Extremal sextic curves and beyond}

The table \ref{tableexperiments} includes five locally extremal
sextic curves including the three curves with eleven components.

The most {\em difficult} one is Gudkov's sextic followed by
Hilbert's. We have run our optimisation algorithm on these curves for
several months! Yet, more computation are needed, as shown in the
table. Computing accurately the direction of descent requires to use 
more than the 64 bits of precision available in {\em modern}
processors. It is a pity that 128 bits have been abandoned in hardware,
but luckily we used GNU MP.

Remark: those curves are most of the time (always as far as the
author knows) shown in
literature as schemata. We give here in figure \ref{figharnack}
to \ref{fighilbertter} drawing of the {\em real} curves. If you are
interested to get the corresponding polynomials, you are welcome to
visit the following web page:

\begin{center}
\verb#http://lama.univ-savoie.fr/~raffalli/glsurf-optimisation.php#
\end{center}

\begin{figure}
\includegraphics[scale=.2]{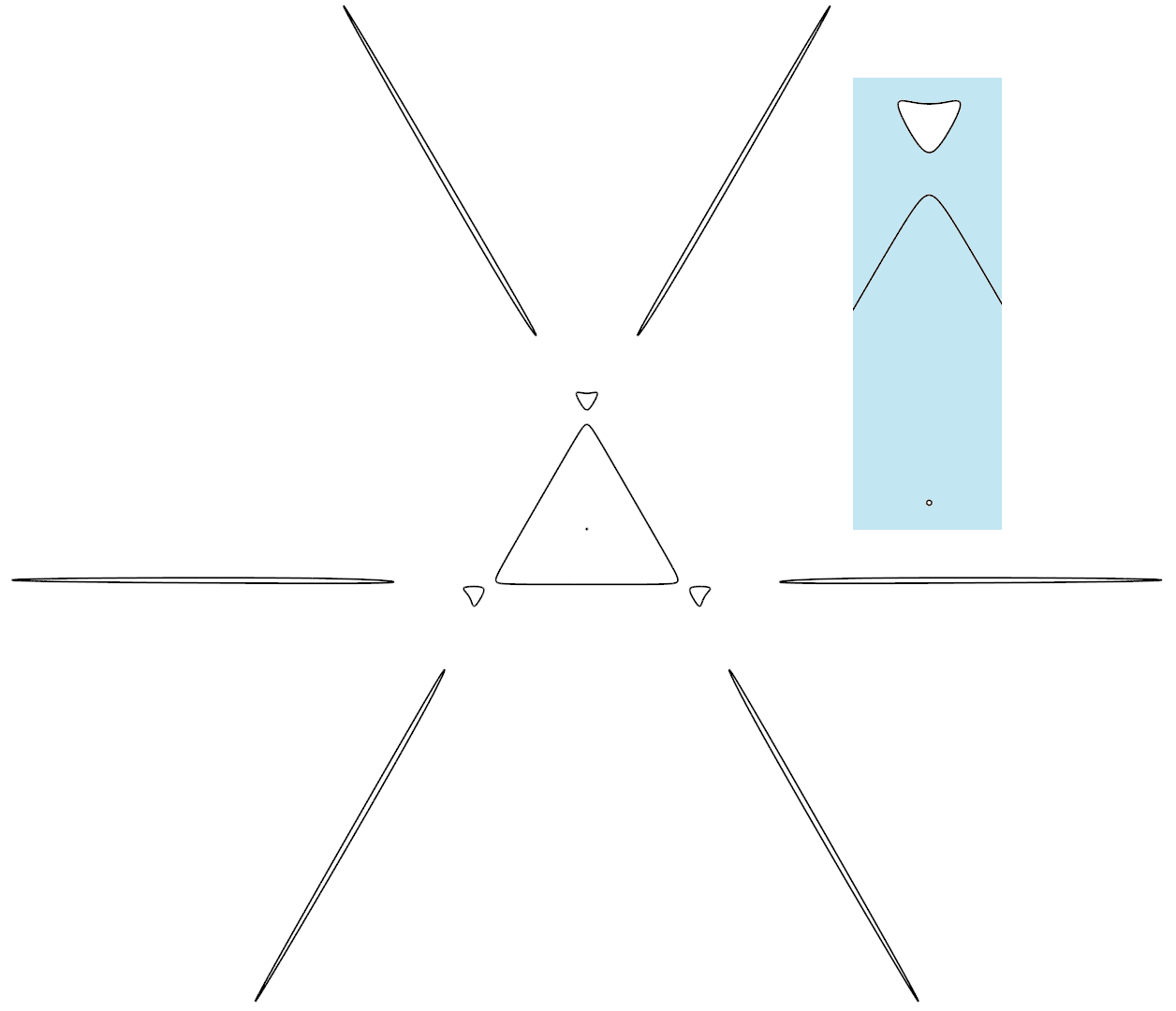}
\caption{Harnack's sextic}\label{figharnack}
\end{figure}

\begin{figure}
\includegraphics[scale=.2]{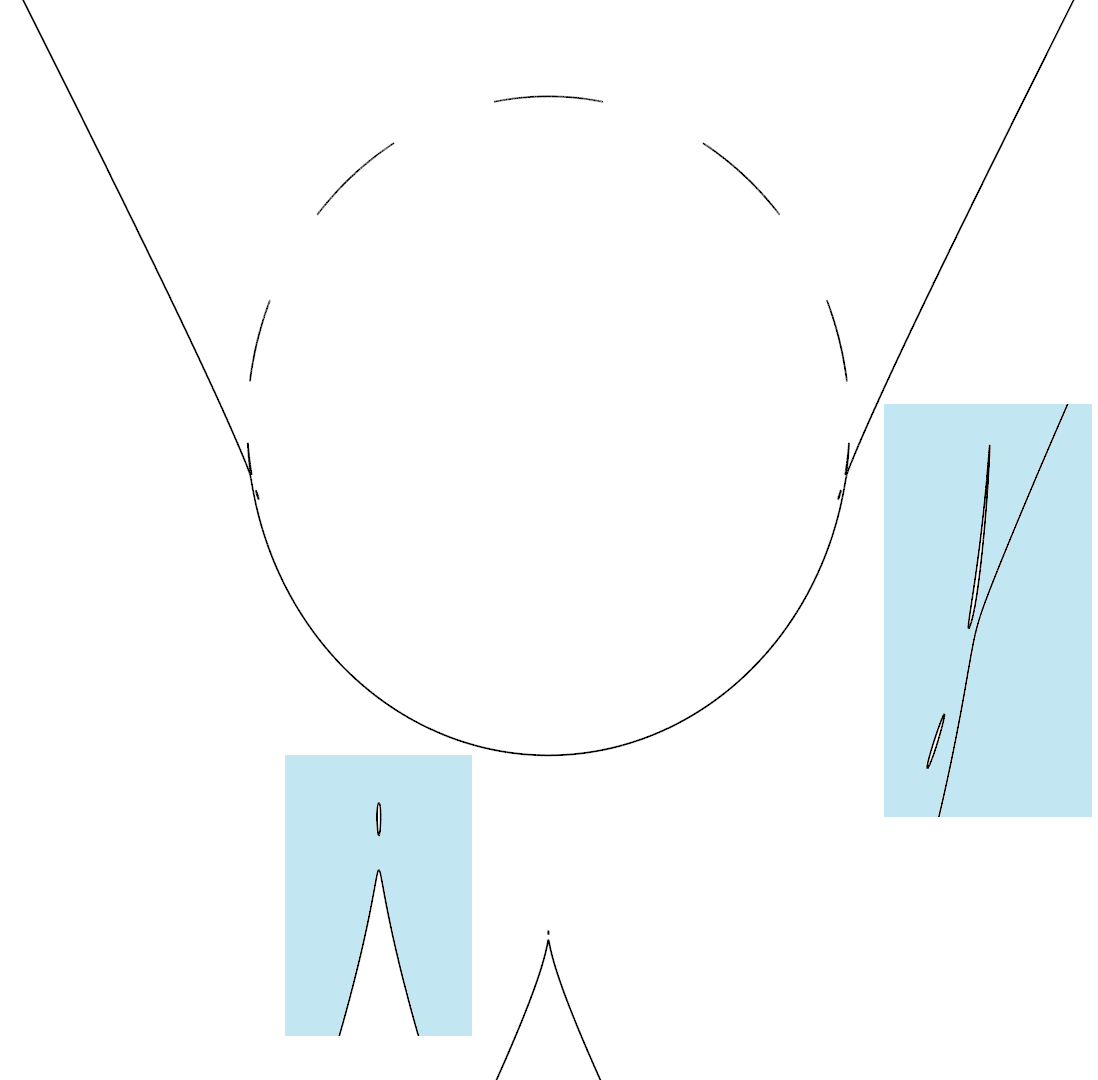}
\caption{Hilberts' sextic}\label{fighilbert}
\end{figure}

\begin{figure}
\includegraphics[scale=.2]{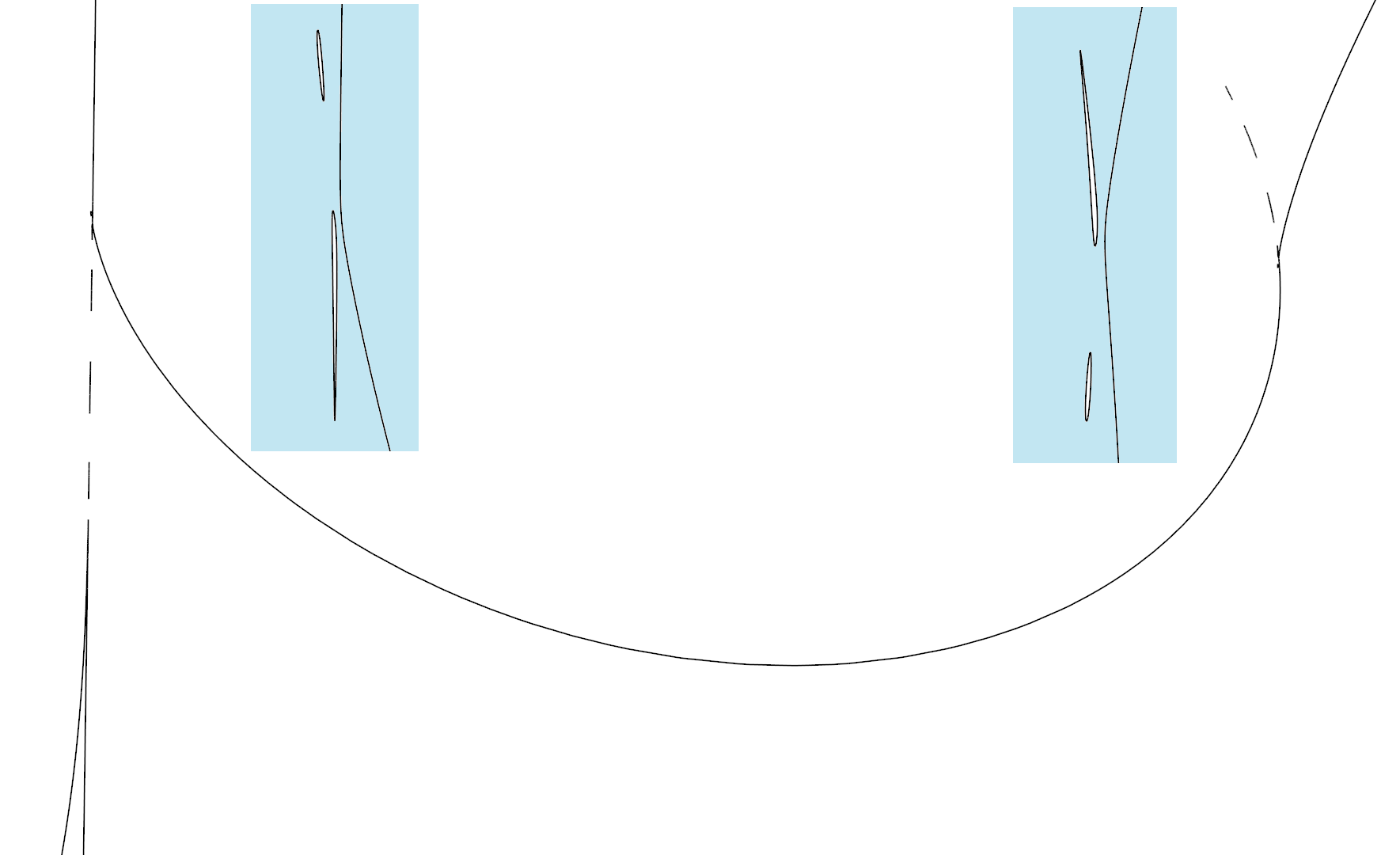}
\caption{Gudkov's sextic}\label{figgudgov}
\end{figure}

The extremal curves which have only nine components are interesting. They show some {\em useless} pikes which are quite
surprising. Probably, these pikes are necessary to have enough critical
points for corollary \ref{comblinbis} to hold.

\begin{figure}
\includegraphics[scale=.2]{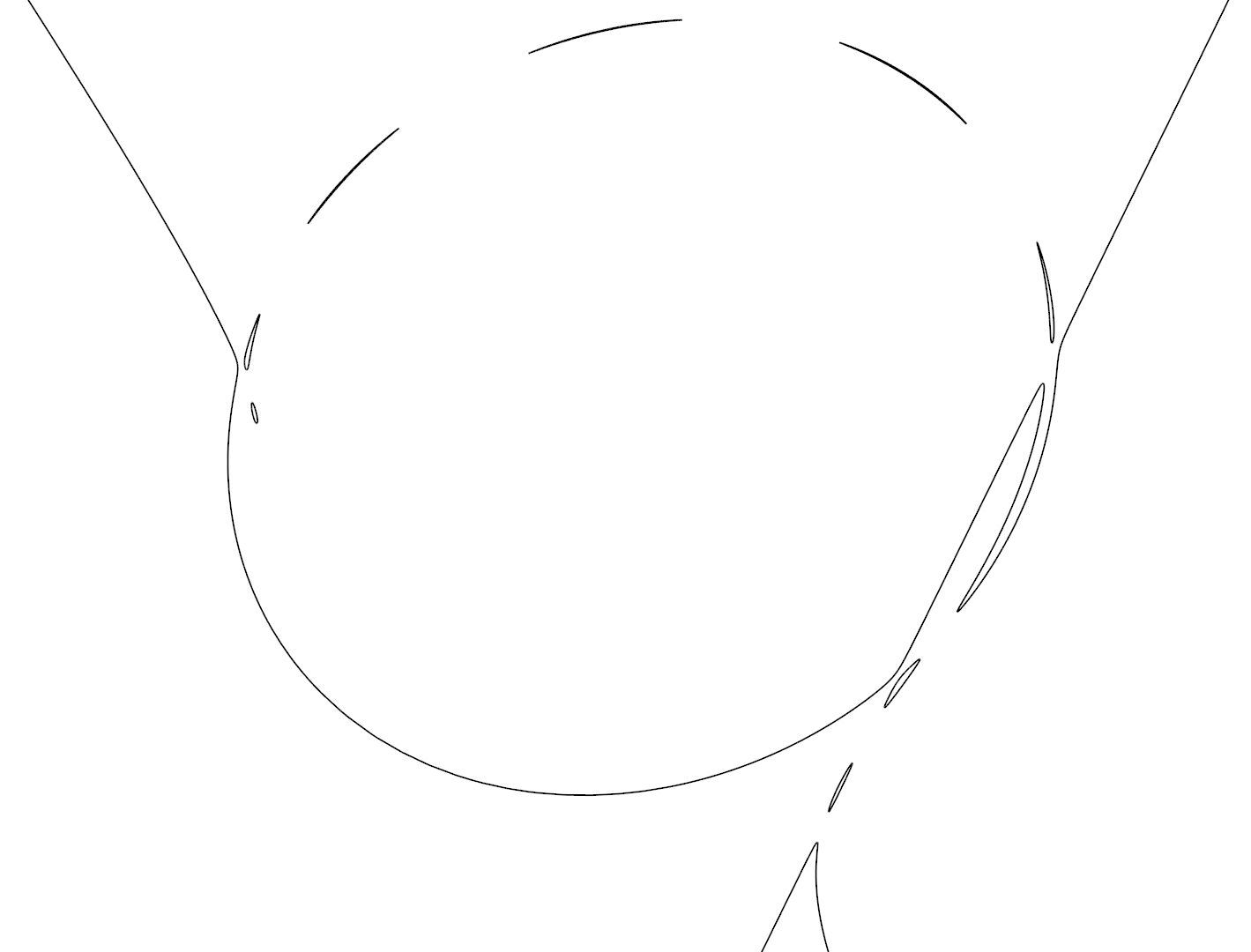}
\caption{Sextic with topology $(O\times 6) \;  OO$}\label{fighilbert62}\label{fighilbertbis}
\end{figure}

\begin{figure}
\includegraphics[scale=.2]{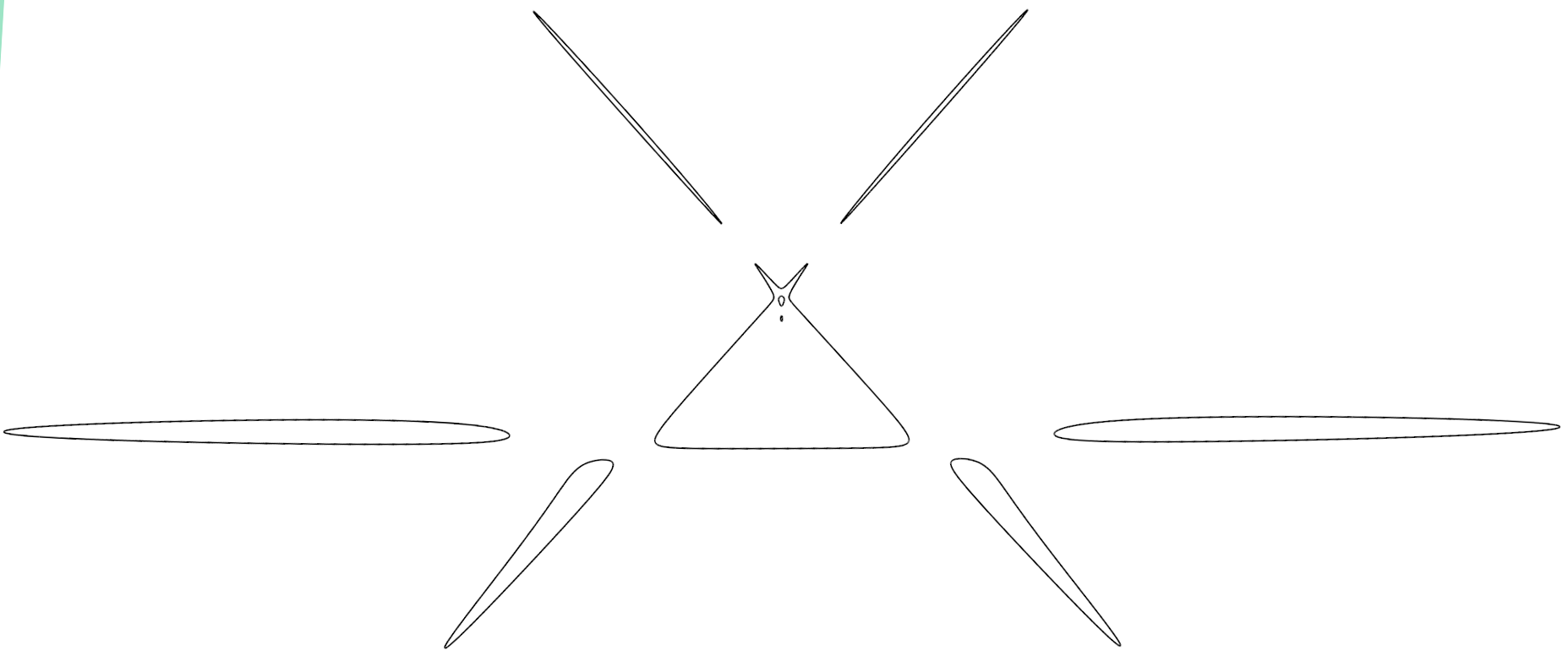}
\caption{Sextic with topology $(OO) \; O\times 6$}\label{fighilbertter} 
\end{figure}

We have also included results for Harnack's curves of degree seven and
height. The later is also not yet optimised enough.

%<!-- Local IspellDict: british -->
%<!-- Local IspellPersDict: ~/.ispell-british -->

\section{Conclusion}

There remains a lot of open problems in this work. Those we find the
most interesting are:

\begin{enumerate}
\item Complete the univariate case. We were surprised that 
proving that the polynomials $T_{r,d}$ are (or not) the global maximum to the
distance to the discriminant among polynomials with the same norm and
number of roots, is not easy, even when $r = d$.

\item In the general case, we could search for an upper bound to the number of
  terms in the identity given by corollary \ref{comblinbis}, from a bound for the number of critical values of a polynomials
  on two levels. For curves, such a bound is given by Chmutov
  in \cite{Chmutov95}; an asymptotic equivalent is $\frac{7}{8} d^2$. However, this result gives a bound which is greater
  than the dimension of the space of curves: $\simeq
  \frac{1}{2} d^2$ and we expect a better bound from our experimental results.

\item A lower bound for the same quantity seems much harder and could
  lead to proof that some topology can not be realised with a given
  degree ...

\item More generally, the points $ \{c_1,\dots,c_k\}$ on the sphere
  that are used  by the identity
$$
P(x) = \sum_{i=1}^k \lambda_i \langle x | c_i \rangle^d
$$
in  corollary \ref{comblinbis} are solution of a family of algebraic
systems.
If we know $k$ and the sign $s_i \in \{-1,1\}$ of $P$ at $c_i$ for $1
\leq i \leq k$, we have linear equations for the
$\lambda_i$
by writing $P(c_i) = s_i$. Then, writing that $c_i$ is a critical
point of $P$ completes the algebraic system.

Finding all solutions of these systems for all possible $k \in \NN$ and
$s_1,\dots,s_k \in \{-1,1\}$ and determining the topology of
the corresponding polynomials would mean solving Hilbert's 16th
problem about the topology of algebraic curves.

\end{enumerate}

%<!-- Local IspellDict: british -->
%<!-- Local IspellPersDict: ~/.ispell-british -->

\bibliography{biblio}
\bibliographystyle{plain}

\appendix

\section{Proof of the inequalities for the Bombieri norm}\label{appbomb}

We now prove the inequalities of lemma \ref{bombineq}:
\begin{align*}
|P(x)| &\leq \|P\| \|x\|^{d} \cr
\|\nabla P(x)\| &\leq d \, \|P\| \,\|x\|^{d-1} \cr
\|\mathcal{H} P(x)\|_2 \leq \|\mathcal{H} P(x)\|_F &\leq
\displaystyle d(d-1) \,\|P\|\, \|x\|^{d-2} \cr
\end{align*}

We consider that $P_{\mathcal B} =
(a_\alpha)_{|\alpha|=d}$ and therefore, 
$\displaystyle P(x) = \sum_{|\alpha|=d} a_\alpha \sqrt{\frac{d!}{\alpha!}} x^\alpha$:
\begin{enumerate}
\item For the first inequality, the proof is easy:
 $$
  \begin{array}{rcl}
    P(x)^2 &=& \displaystyle \left(\sum_{|\alpha|=d} a_\alpha \sqrt{\frac{d!}{\alpha!}} x^\alpha\right)^2 \cr
 &\leq& \displaystyle \sum_{|\alpha|=d} a_\alpha^2
 \sum_{|\alpha|=d}\frac{d!}{\alpha!} x^{2\alpha} \hbox{ by
   Cauchy-Schwartz inequality}\cr
 &=&  \displaystyle \|P\|^2   \|x\|^{2d}
  \end{array}$$
\item For the second inequality, we first consider the partial
  derivative $\frac{\partial P(x)}{\partial x_i}$: 
$$
  \begin{array}{rcl}
    \displaystyle\left( \frac{\partial P(x)}{\partial x_i}\right)^2 &=& \displaystyle \left(\sum_{|\alpha|=d} a_\alpha \sqrt{\frac{d!}{\alpha!}}\alpha_i x^{\alpha-\chi_i}\right)^2 \cr
 &\leq& \displaystyle \sum_{|\alpha|=d} \alpha_i a_\alpha^2
 \sum_{|\alpha|=d}\frac{d!}{\alpha!} \alpha_i x^{2(\alpha - \chi_i)}
 \hbox{ by Cauchy-Schwartz}\cr
 &=&  \displaystyle d \sum_{|\alpha|=d} \alpha_i a_\alpha^2
 \sum_{|\alpha|=d, \alpha_i \neq 0}\frac{(d-1)!}{(\alpha-\chi_i)!}  x^{2(\alpha - \chi_i)} \cr
 &=&  \displaystyle d \sum_{|\alpha|=d} \alpha_i a_\alpha^2
 \sum_{|\beta|=d-1}\frac{(d-1)!}{\beta!}  x^{2\beta} \cr
 &=&  \displaystyle d \|x\|^{2(d-1)} \sum_{|\alpha|=d} \alpha_i a_\alpha^2 
  \end{array}$$
This means that:
 $$
  \begin{array}{rcl}
\|\nabla P(x)\|^2 &=& \displaystyle\sum_{1 \leq i \leq n}  \left(
  \frac{\partial P(x)}{\partial x_i}\right)^2 \cr
&\leq&  \displaystyle\sum_{1 \leq i \leq n}  \left( d  \|x\|^{2(d-1)} \sum_{|\alpha|=d}
  \alpha_i a_\alpha^2 \right) \cr
&=&  \displaystyle d\|x\|^{2(d-1)} \sum_{1 \leq i \leq n}\sum_{|\alpha|=d}
  \alpha_i a_\alpha^2 \cr
&=&  \displaystyle d\|x\|^{2(d-1)} \sum_{|\alpha|=d}
  \left(\sum_{1 \leq i \leq n} \alpha_i\right) a_\alpha^2 \cr
&=&  \displaystyle d^2\|x\|^{2(d-1)} \sum_{|\alpha|=d} a_\alpha^2 \cr
&=&  \displaystyle d^2 \|P\|^2 \|x\|^{2(d-1)} 
  \end{array}$$
\item For the last inequality, we consider the partial
  derivative $\displaystyle \frac{\partial^2 P(x)}{\partial x_i x_j}$
  when $i \neq j$: 
$$
  \begin{array}{rcl}
    \displaystyle\left( \frac{\partial^2 P(x)}{\partial x_i
        x_j}\right)^2 &=& 
\displaystyle \left(\sum_{|\alpha|=d} a_\alpha \sqrt{\frac{d!}{\alpha!}}\alpha_i\alpha_j x^{\alpha-\chi_i-\chi_j}\right)^2 \cr
 &\leq& \displaystyle \sum_{|\alpha|=d} \alpha_i\alpha_j a_\alpha^2
 \sum_{|\alpha|=d}\frac{d!}{\alpha!} \alpha_i\alpha_j x^{2(\alpha - \chi_i-\chi_j)} \hbox{ by Cauchy-Schwartz}\cr
 &=&  \displaystyle d(d-1) \sum_{|\alpha|=d} \alpha_i\alpha_j a_\alpha^2
 \sum_{|\alpha|=d, \alpha_i \neq 0,\alpha_j \neq 0}\frac{(d-2)!}{\alpha-\chi_i-\chi_j!}  x^{2(\alpha - \chi_i-\chi_j)} \cr
 &=&  \displaystyle d(d-1) \sum_{|\alpha|=d} \alpha_i\alpha_j  a_\alpha^2
 \sum_{|\beta|=d-2}\frac{(d-2)!}{\beta!}  x^{2\beta} \cr
 &=&  \displaystyle d(d-1) \|x\|^{2(d-2)} \sum_{|\alpha|=d} \alpha_i\alpha_j  a_\alpha^2 
  \end{array}$$
Now, we consider the partial
  derivative $\frac{\partial^2 P(x)}{\partial x_i^2}$:
$$
  \begin{array}{rcl}
    \displaystyle\left( \frac{\partial^2 P(x)}{\partial x_i^2}\right)^2 &=& 
\displaystyle \left(\sum_{|\alpha|=d} a_\alpha \sqrt{\frac{d!}{\alpha!}}\alpha_i(\alpha_i-1) x^{\alpha-2\chi_i}\right)^2 \cr
 &\leq& \displaystyle \sum_{|\alpha|=d} \alpha_i(\alpha_i-1)
 a_\alpha^2 
 \sum_{|\alpha|=d}\frac{d!}{\alpha!} \alpha_i(\alpha_i-1) x^{2(\alpha - 2\chi_i)} \hbox{ by Cauchy-Schwartz}\cr
 &=&  \displaystyle d(d-1) \sum_{|\alpha|=d} \alpha_i(\alpha_i-1) a_\alpha^2
 \sum_{|\alpha|=d, \alpha_i \geq 2}\frac{(d-2)!}{(\alpha-2\chi_i)!}  x^{2(\alpha - 2\chi_i)} \cr
 &=&  \displaystyle d(d-1) \sum_{|\alpha|=d} \alpha_i(\alpha_i-1)  a_\alpha^2
 \sum_{|\beta|=d-2}\frac{(d-2)!}{\beta!}  x^{2\beta} \cr
 &=&  \displaystyle d(d-1) \|x\|^{2(d-2)} \sum_{|\alpha|=d} \alpha_i(\alpha_i-1)  a_\alpha^2 
  \end{array}$$

Let us define $\iota_{i,j} = 0$ when $i \neq j$ and  $\iota_{i,i} =
1$. Then, we have:
 $$
  \begin{array}{rcl}
\|\mathcal H P(x)\|_F^2 &=& \displaystyle\sum_{1 \leq i,j \leq n}  \left(
  \frac{\partial^2 P(x)}{x_i x_j}\right)^2 \cr
&\leq&  \displaystyle\sum_{1 \leq i,j \leq n}  \left( d(d-1)  \|x\|^{2(d-2)} \sum_{|\alpha|=d}
  \alpha_i(\alpha_j - \iota_{i,j}) a_\alpha^2 \right) \cr
&=&  \displaystyle d(d-1)\|x\|^{2(d-2)} \sum_{1 \leq i,j \leq n}\sum_{|\alpha|=d}
  \alpha_i(\alpha_j - \iota_{i,j}) a_\alpha^2 \cr
&=&  \displaystyle d(d-1)\|x\|^{2(d-2)} \sum_{1 \leq i \leq n}\sum_{|\alpha|=d}
  \alpha_i(d - 1) a_\alpha^2 \cr
&=&  \displaystyle d(d-1)\|x\|^{2(d-2)}\sum_{|\alpha|=d}
  d(d - 1) a_\alpha^2 \cr
&=&  \displaystyle d^2(d-1)^2 \|P\|^2 \|x\|^{2(d-2)} 
  \end{array}$$

\end{enumerate}

\section{Independance of  $U_k(x) = \langle x | u_k\rangle^d$}

We need the following lemma:
\begin{lemma}\label{determinant}
Let $\{u_0,\dots,u_d\}$ be distinct points in $\mathcal S^1$.
Let $U_k(x) = \langle x | u_k\rangle^d$ for $0 \leq k \leq d$.

Then, the family of polynomials
$\{U_0,\dots,U_d\}$ is linearly independant and therefore a base of 
the space of homogeneous polynomials of degree $d$ in $2$ variables.
\end{lemma}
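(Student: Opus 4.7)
The plan is to reduce linear independence to the non-vanishing of a single generalised Vandermonde determinant. Since the space of homogeneous polynomials of degree $d$ in two variables has dimension $d+1$ and we are handed exactly $d+1$ polynomials, I only need to show linear independence. Writing $u_k = (a_k, b_k) = (\cos\theta_k, \sin\theta_k)$, the binomial theorem expands
$$U_k(x,y) = \sum_{j=0}^d \binom{d}{j} a_k^{d-j} b_k^j\; x^{d-j} y^j,$$
so the matrix of coordinates of $(U_0,\dots,U_d)$ in the monomial basis $(x^{d-j} y^j)_{0 \le j \le d}$ is $M_{kj} = \binom{d}{j} a_k^{d-j} b_k^j$. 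Pulling each binomial coefficient out of its column, the question reduces to the non-vanishing of $\det\bigl(a_k^{d-j} b_k^j\bigr)_{0\le k,j\le d}$.

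The main step is then the classical identity
$$\det\bigl(a_k^{d-j} b_k^j\bigr)_{0 \le k,j \le d} \;=\; \prod_{0 \le k < l \le d}\bigl(a_k b_l - a_l b_k\bigr).$$
I would verify it by a short algebraic argument: both sides are bi-homogeneous in each pair $(a_k, b_k)$ of the same total degree, and the left-hand side vanishes whenever two rows are proportional, so the right-hand side divides it; comparing one chosen monomial coefficient, e.g.\ that of $\prod_k a_k^{d-k} b_k^k$, fixes the constant to $1$. Alternatively, on the open set where every $a_k$ is non-zero one factors $a_k^d$ out of each row and recognises the usual Vandermonde determinant in the ratios $b_k/a_k$, then extends the formula to the whole space by continuity.

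Finally, $a_k b_l - a_l b_k = \sin(\theta_l - \theta_k)$, so the determinant equals $\prod_{k<l}\sin(\theta_l - \theta_k)$, which is non-zero precisely when no two of the $u_k$ are equal or antipodal, i.e.\ when the $u_k$ represent distinct points of the projective line $\mathbb P^1(\mathbb R)$. That is the content of the hypothesis (note that if two $u_k$ were antipodal one would have $U_k = \pm U_l$ and the conclusion would genuinely fail, so the correct reading of ``distinct'' here is pairwise non-proportional). The only step that requires any care is the determinantal identity; once it is in hand the lemma is immediate.
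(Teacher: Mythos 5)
Your proof is correct, and it bottoms out in the same key computation as the paper's --- the non-vanishing of the homogeneous Vandermonde determinant $\det\bigl(\cos^{d-j}(\theta_k)\sin^{j}(\theta_k)\bigr)_{k,j}=\prod_{k<l}\sin(\theta_l-\theta_k)$ --- but you get there by a more direct route. The paper first forms the Gramian matrix $G=(\langle U_i\mid U_j\rangle)=(\cos^d(\theta_i-\theta_j))$ using the Bombieri scalar product and Lemma \ref{scalarlinear}, then factors $G={}^tVDV$ with $D$ the diagonal of binomial coefficients and $V$ the homogeneous Vandermonde matrix, so that invertibility of $G$ (hence independence) follows from $\det V\neq 0$. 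You instead write down the coordinate matrix of the $U_k$ in the monomial basis, which is the same $V$ up to column scaling by the binomial coefficients, so the Bombieri machinery is not needed at all; your argument is more elementary and self-contained, while the paper's version reuses a formula it wants anyway and exhibits $G$ as positive definite along the way. Your parenthetical about antipodal points is a genuine and worthwhile observation: if $u_k=-u_l$ then $U_k=(-1)^dU_l$ and independence fails, so ``distinct'' must be read as pairwise non-proportional (distinct in $\mathbb{P}^1(\mathbb{R})$); the paper's proof carries the same implicit restriction, since $\prod_{i<j}\sin(\theta_i-\theta_j)$ also vanishes for antipodal pairs, and in the one place the lemma is applied (the points $u_{k\pi/d}$, $0\le k\le d-1$) the hypothesis holds in this stronger sense.
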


\begin{proof}[Proof of the lemma]
We define $\theta_k \in [0,2\pi[$ such that $c_k = (\cos(\theta_k), \sin(\theta_k))$.

Using lemma \ref{scalarlinear}, we find that
\begin{align*}
\langle R_i | R_j\rangle &= \langle c_i | c_j\rangle^d\\ &=
\cos^d(\theta_i - \theta_j)
\end{align*}

We define the $(d+1) \times (d+1)$ symmetrical matrix which is
the Gramian matrix of the family $\{U_0,\dots,U_d\}$ with respect to
Bombieri scalar product:
$$G = \left(
\begin{matrix}
1 & \cos^d(\theta_0 - \theta_1) & \dots &
  \cos^d(\theta_0 - \theta_d) \\
\cos^d(\theta_1 - \theta_0) & 1 & \dots &
  \cos^d(\theta_1 - \theta_d)  \\
\vdots & \vdots & \ddots & \vdots \\
\cos^d(\theta_d - \theta_0) & \cos^d(\theta_d - \theta_1) &\dots & 1
\end{matrix}
\right)$$
We find that $G$ is the matrix with its
$(i,j)$ coefficient equal to
\begin{align*}
\cos^d(\theta_i - \theta_j) &= (\cos(\theta_i)\cos(\theta_j)
- \sin(\theta_i)\sin(\theta_j))^d \\
&= \sum_{k=0}^{d} \binom{d}{k} \cos^k(\theta_i)\sin^{d-k}(\theta_i)
\cos^k(\theta_j)\sin^{d-k}(\theta_j)
\end{align*}
Hence, we find that
$$G = {}^tV D V$$
where $D$ is the diagonal matrix with coefficient 
$(k,k)$ equals to $\binom{d}{k}$ and $V$
is a matrix with the $(k,i)$ coefficient equals to 
$\cos^k(\theta_i)\sin^{d-k}(\theta_i)$.

We remark that $V$ is an homogeneous Vandermonde matrix
whose determinant is $\prod_{0 \leq i < j \leq d} \sin(\theta_i -
\theta_j)$
which gives the wanted result.
\end{proof}

%<!-- Local IspellDict: british -->
%<!-- Local IspellPersDict: ~/.ispell-british -->

\end{document}